\documentclass[13pt,reqno]{amsart}
\UseRawInputEncoding

\usepackage{amsmath,amssymb,amsthm}
\usepackage{bm}
\usepackage{graphicx}
\usepackage{color}
\usepackage{float}
\usepackage{url}
\usepackage{stmaryrd}
\usepackage{mathrsfs}
\usepackage{caption}
\usepackage{url}
\newcommand{\bel}[1]{\begin{equation*}\label{#1}}
	
	\newcommand{\be}{\begin{equation}}

		\newcommand{\ba}{\begin{eqnarray}}
			\newcommand{\ea}{\end{eqnarray}}

		\newcommand{\qe}{\end{equation}}
	\newcommand{\R}{{\mathbb R}}
	
	\newcommand{\Z}{{\mathbb Z}}
	\newcommand{\C}{{\mathbb C}}
	\newcommand{\T}{{\mathbb T}}
	\newcommand{\supp}{{\mathrm{supp}}}

	\newcommand{\eg}{\begin{example}}
		\newcommand{\egd}{\end{example}}
	\newcommand{\tm}{\begin{thm}}
		\newcommand{\tmd}{\end{thm}}
	\newcommand{\co}{\begin{coro}}
		\newcommand{\cod}{\end{coro}}
	\newcommand{\enu}{\begin{enumerate}}
		\newcommand{\enud}{\end{enumerate}}
	\newcommand{\rmk}{\begin{rem}}
		\newcommand{\rmkd}{\end{rem}}
	
	\theoremstyle{theorem}
	\newtheorem{thm}{Theorem}[section]
	\newtheorem{prop}[thm]{Proposition}
	\theoremstyle{example}
	\newtheorem{example}[thm]{Example}
	\newtheorem{coro}[thm]{Corollary}
	\theoremstyle{lemma}
	\newtheorem{lemma}[thm]{Lemma}
	\theoremstyle{definition}
	
	\theoremstyle{proof}
	
	\theoremstyle{remark}
	\newtheorem{rem}[thm]{Remark}
	\theoremstyle{remark}
	
	\theoremstyle{plain}
	\newtheorem{conj}[thm]{Conjecture}
	
	\UseRawInputEncoding

\begin{document}
		
		\title[Sharp Bilinear Decoupling under Shell-Type Restrictions and Applications]{Sharp Bilinear Decoupling under Shell-Type Restrictions and Applications}
		\author{Jiajun Wang}
		\address{Jiajun Wang: Courant Institute of Mathematical Sciences,
			New York University, New York, NY}
		\email{jw9409@nyu.edu}
		
		\maketitle
		\vspace{-0.7cm}
		\numberwithin{equation}{section}
	
		\begin{abstract}
			We establish a sharp bilinear decoupling inequality under shell-type Fourier support restrictions. The optimal coefficient exhibits a transition at $N_1=N_2^2$ and is strictly smaller than the corresponding thick-annulus coefficient of Fan--Staffilani--Wang--Wilson \cite{fan2018bilinear}. The proof combines a lossless $L^2$ cap--plate decomposition, a one-direction linear lifting argument, localized shell-type linear decoupling, and a complementary localization of the second factor. 
			
			For applications, we also derive a corresponding bilinear Strichartz estimate, toral eigenfunction estimates, mixed additive-energy bounds on lattice spheres, and a separated nonlinear smoothing result for the periodic Zakharov system.
		\end{abstract}

		\section{introduction}
	Decoupling inequalities have become a fundamental tool in modern harmonic
	analysis, with applications to Fourier restriction, dispersive equations,
	exponential sums, and geometric measure theory. The subject has its origins
	in the work of Wolff \cite{Wolff2000}, who introduced a plate decomposition
	inequality for the cone in connection with the local smoothing problem for
	the wave equation. This circle of ideas was subsequently developed further in
	\cite{LabaWolff2002}. For related developments during this early stage of
	decoupling theory, we refer to
	\cite{GarrigosSeeger2009,GarrigosSeeger2010}.
	
	A major breakthrough was achieved by Bourgain and Demeter
	\cite{1}, who proved the following sharp $\ell^2$ decoupling theorem,
	up to an $N^\epsilon$ loss, for compact hypersurfaces with positive definite
	second fundamental form.
		
	\begin{itemize}
		\item 
		
		Let $R\gg 1$ and $d\ge 1$, and suppose that $f:\R^{d}\to \C$ is supported on 
		\begin{equation*}
			\supp\; f\subseteq \{\xi\in \R^{d}: |\xi|\le 1\}.
		\end{equation*}
		Then, for $2\le p\le \infty$ and any $\epsilon>0$, the following sharp estimate holds:
		\begin{equation}\label{BD}
			\|E_{\mathbb{P}^d}f\|_{L^p(B_R^{d+1})}\lesssim_{\epsilon}R^{\epsilon} M_d(R) \left( \sum_{|\theta|=R^{-1/2}}\|E_{\mathbb{P}^d}f_{\theta}\|_{L^p(B_R^{d+1})}^2\right)^{1/2},
		\end{equation}
		\begin{equation*}
			(E_{\mathbb{P}^d}f)(x,t):=\int_{\R^d} e^{2\pi i(x\cdot\xi+t|\xi|^2)}f(\xi)d\xi,\qquad \mathbb{P}^d:=\lbrace(\xi,|\xi|^2): |\xi|\le 1) \rbrace,
		\end{equation*}
		where $f_{\theta}:=f\chi_{\theta}$ and $M_d(R)$ is given by 
		\begin{equation*}
			M_d(R):=
			\begin{cases}
				R^{\left( \frac{d}{4}-\frac{d+2}{2p}\right)}, & \dfrac{2(d+2)}{d}\le p\le \infty,\\[8pt]
				1, & 2\le p<\dfrac{2(d+2)}{d}.
			\end{cases}
		\end{equation*}
	\end{itemize}

	Their theorem established
	the canonical framework of $\ell^2$ decoupling theory and led to a wide range of
	applications, including sharp discrete restriction and periodic Strichartz
	estimates. 
	
	One of the most striking applications of decoupling theory is its connection
	with exponential sums and Diophantine equations. Bourgain, Demeter, and Guth
	\cite{BourgainDemeterGuth2016} established the sharp decoupling theorem for
	the moment curve and used it to prove the main conjecture in Vinogradov's
	mean value theorem. This work initiated the development of decoupling for
	higher-dimensional moment manifolds and systems of Diophantine equations.
	For further developments in this direction, we refer to
	\cite{BourgainDemeterGuo2017,GuoZorinKranich2020}.

		An important refinement of the classical theory is to incorporate
		additional information on the spatial distribution of the associated wave
		packets. Refined Strichartz and refined decoupling estimates, developed in
		particular in the work of Guth-Iosevich-Ou-Wang
		\cite{guth2020falconer} and Du-Zhang \cite{DuZhang2019}, yield improved
		estimates when the wave packets satisfy suitable concentration or
		multiplicity conditions.
		
		These developments illustrate a general principle that has become
		increasingly important in decoupling theory: additional geometric information,
		either on the frequency support or on the spatial distribution of wave
		packets, can lead to estimates that are substantially stronger than the
		corresponding canonical decoupling inequality. In the present paper, we
		pursue this principle in a bilinear setting, where the Fourier supports are
		subject to additional shell-type restrictions.

		We first recall the linear shell-type decoupling estimate obtained in \cite{kinoshita2023decoupling}. 
		\begin{thm}
			Let $R\gg 1$, $d\ge 2$, and suppose that $f:\R^{d}\to \C$ is supported on 
			\begin{equation*}
				\supp\; f\subseteq \{\xi\in \R^{d}: 1-\frac{1}{R^{1/2}}\le |\xi|\le 1+\frac{1}{R^{1/2}}\},
			\end{equation*}
			then for $2\le p\le \infty$, we have the following sharp estimate:
			\begin{equation}\label{linear}
				\|E_{\mathbb{P}^d}f\|_{L^p(B_R^{d+1})}\lessapprox C_d(R) \left( \sum_{|\theta|=R^{-1/2}}\|E_{\mathbb{P}^d}f_{\theta}\|_{L^p(B_R^{d+1})}^2\right)^{1/2},
			\end{equation}
			where $f_{\theta}:=f\chi_{\theta}$ and $C_d(R)$ is given by 
			\begin{equation*}
				C_d(R):=
				\begin{cases}
					R^{\left( \frac{d-1}{4}-\frac{d+1}{2p}\right)}, & \dfrac{2(d+1)}{d-1}\le p\le \infty,\\[8pt]
					1, & 2\le p<\dfrac{2(d+1)}{d-1}.
				\end{cases}
			\end{equation*}
		\end{thm}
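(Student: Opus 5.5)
The strategy is to reduce the shell-restricted paraboloid decoupling to the Bourgain--Demeter decoupling theorem \eqref{BD} in one dimension lower, for the cone. The key geometric observation is that the shell $\{1-R^{-1/2}\le|\xi|\le 1+R^{-1/2}\}$ makes $|\xi|^2$ essentially constant up to $O(R^{-1/2})$. So on $B_R^{d+1}$ the lifted set $\{(\xi,|\xi|^2)\}$ is an $R^{-1}$-neighbourhood of a $(d-1)$-dimensional surface only after a further slicing; globally it is contained in an $R^{-1/2}$-thickened piece of the sphere $S^{d-1}\times\{1\}$.

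\emph{Step 1: slicing.} Partition the shell radially into $\sim R^{1/2}$ thin shells of width $R^{-1}$. The radial coordinate $r=|\xi|$ and the height $t=r^2$ are then both fixed to accuracy $R^{-1}$ on each slice. Each thin slice therefore lifts into the $R^{-1}$-neighbourhood of a sphere $S^{d-1}_r\times\{r^2\}$. This is a codimension-two set inside $\mathbb{R}^{d+1}$, and it has $d-1$ nonvanishing principal curvatures.

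\emph{Step 2: the cone.} Instead of losing through the $R^{1/2}$ radial pieces, observe that the whole shell lifts into the $R^{-1}$-neighbourhood of the $d$-dimensional conic surface $\{(\xi,|\xi|^2)\}$, restricted to $|\xi|\in[1-R^{-1/2},1+R^{-1/2}]$. After the affine change $(\xi,t)\mapsto(\xi,(t-1)/2)$, this region lies in an $R^{-1}$-neighbourhood of the truncated cone $\{(\xi,|\xi|-1)\}$. The reason is that $|\xi|^2-1=2(|\xi|-1)+O(R^{-1})$ on the shell. Decoupling for this $d$-dimensional cone in $\mathbb{R}^{d+1}$, via the Bourgain--Demeter cone theorem, reduces to decoupling for the paraboloid in $\mathbb{R}^{d}$, i.e.\ \eqref{BD} with $d$ replaced by $d-1$. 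It decouples into plates of angular width $R^{-1/2}$, which are exactly the caps $\theta$ intersected with the shell. The resulting constant is $R^{\epsilon}M_{d-1}(R)$. Substituting $d-1$ into the formula for $M_d$ gives exponent $\frac{d-1}{4}-\frac{d+1}{2p}$ above $p=\frac{2(d+1)}{d-1}$, matching $C_d(R)$.

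\emph{Step 3: sharpness.} Two examples establish that the constant cannot be improved. First, a single cap $\theta$, or equivalently constancy for $p\le\frac{2(d+1)}{d-1}$, shows the constant is at least $1$. Second, take $f$ a smooth bump on the full shell. Its extension concentrates near the origin in a ball of radius $\sim 1$ on which $|Ef|\sim R^{-1/2}$, giving the constructive-interference lower bound. Comparing $\|Ef\|_p$ against $(\sum_\theta\|Ef_\theta\|_p^2)^{1/2}$, with the $\sim R^{(d-1)/2}$ caps each contributing a plate-shaped wave packet, yields $R^{\frac{d-1}{4}-\frac{d+1}{2p}}$.

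\emph{Main obstacle.} The delicate point is Step 2: checking that the $O(R^{-1})$ error in approximating the paraboloid over the shell by the cone is harmless at scale $R$, uniformly in $\xi$. Verifying the exponent in the sharpness example is routine bookkeeping by comparison.
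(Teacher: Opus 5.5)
The paper gives no proof of this statement. It is quoted from \cite{kinoshita2023decoupling} and used, together with its localized form Theorem~\ref{variant}, as a black box. Your Step~2 is a correct, self-contained proof by a different route: a reduction to the Bourgain--Demeter cone theorem.

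On the shell, $|\xi|^2-1=2(|\xi|-1)+(|\xi|-1)^2$ with $(|\xi|-1)^2\le R^{-1}$ pointwise. So the $R^{-1}$-neighbourhood of the lifted shell lies in an $O(R^{-1})$-neighbourhood of a translated, affinely rescaled light cone, uniformly in $\xi$. The ``main obstacle'' you flag is therefore immediate, and Step~1 can be dropped. (Also, ``conic surface $\{(\xi,|\xi|^2)\}$'' is a slip for the paraboloid.)

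The cone theorem gives the constant $R^{\epsilon}$ for $2\le p\le\frac{2(d+1)}{d-1}$. For larger $p$, interpolate with the trivial bound $R^{\frac{d-1}{4}}$, which comes from Cauchy--Schwarz over the $\sim R^{\frac{d-1}{2}}$ sectors. Each sector of aperture $R^{-1/2}$ meets the band $\bigl||\xi|-1\bigr|\le R^{-1/2}$ in a set of diameter $O(R^{-1/2})$, hence in $O(1)$ caps $\theta$.

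Your route also buys something over the bare citation. The localized estimate of Theorem~\ref{variant}, which the paper uses repeatedly, follows from the same reduction. Lorentz rescaling takes a cone sector of aperture $QR^{-1/2}$ at thickness $R^{-1}$ to a unit-aperture sector at thickness $Q^{-2}$. This gives the constant $M_{d-1}(Q^2)=Q^{\frac{d-1}{2}-\frac{d+1}{p}}$, exactly $C_d'(Q)$.

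One correction in Step~3. For the bump on the full shell, $|Ef|\gtrsim R^{-1/2}$ holds on the whole tube $\{|x|\le c,\ |t|\le cR^{1/2}\}$, not just on a unit ball, because $|\xi|^2$ varies by only $O(R^{-1/2})$ over the shell. This time extent is essential. On a unit ball alone you get only $\|Ef\|_{L^p(B_R)}\gtrsim R^{-1/2}$. Dividing by $\bigl(\sum_\theta\|Ef_\theta\|_p^2\bigr)^{1/2}\sim R^{\frac{d-1}{4}}R^{-\frac d2}R^{\frac{d+2}{2p}}$ then gives only $R^{\frac{d-1}{4}-\frac{d+2}{2p}}$. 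The extra factor $R^{\frac{1}{2p}}$ from the length-$R^{1/2}$ tube is what produces the sharp $R^{\frac{d-1}{4}-\frac{d+1}{2p}}$.
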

		
		This linear shell-type decoupling estimate can be applied to obtain improved multilinear estimates for the Zakharov system. We refer to \cite{kinoshita2023decoupling} for further details.
		
		Inspired by the above shell-type structure, we aim to establish an improved bilinear estimate, formulated as follows (for convenience, we abbreviate $E_{\mathbb{P}^d}f$ as $Ef$).

	\begin{conj}\label{c}
	Given \(N_1 \geq N_2 \geq 1\), let \(f_1:\R^d\to \C\) be supported on  
	\begin{equation}\label{region1}
		\lbrace \xi\in\R^d: 1-\frac{1}{N_1}\le |\xi|\le 1+\frac{1}{N_1}\rbrace,
	\end{equation}
	and let \(f_2:\R^d\to \C\) be supported on 
	\begin{equation*}
		\lbrace \xi\in\R^d: \frac{N_2}{N_1}-\frac{1}{N_1}\le |\xi|\le \frac{N_2}{N_1}+\frac{1}{N_1}\rbrace.
	\end{equation*}
	For a finitely overlapping covering of the ball \(B = \{|\xi| \leq 1\}\) by caps \(\{\theta\}\), \(|\theta| = 1/N_1\), we have the following estimates. For any small \(\epsilon > 0\), when \(2\le d \le 3\),
	\begin{equation}\label{trivial}
		\|E f_1 E f_2\|_{L^2(B_{N_1^2})} \lesssim_{\epsilon} N_2^{\epsilon} \prod_{j=1}^2 \left( \sum_{|\theta|=1/N_1} \|E f_{j, \theta}\|_{L^4(B_{N_1^2})}^2 \right)^{1/2},
	\end{equation}
	and when \(d \geq 4\),
	\begin{equation}\label{bilinear estimate}
		\|E f_1 E f_2\|_{L^2(B_{N_1^2})}
		\lesssim_\epsilon
		N_2^{\epsilon}N_2^{\frac{2d-7}{4}}
		\left(1+\frac{N_2^2}{N_1}\right)^{1/4}
		\prod_{j=1}^2 \left( \sum_{|\theta|=1/N_1} \|E f_{j, \theta}\|_{L^4(B_{N_1^2})}^2 \right)^{1/2}.
	\end{equation}
\end{conj}
\begin{rem}\label{comparison}
	The shell-type bilinear estimate (\ref{bilinear estimate}) can be viewed as an improvement over its linear analogue (\ref{linear}) when $p=4$. In fact, by directly applying $(\ref{linear})$ and H\"older's inequality, one can only derive
	\begin{equation}\label{Holder}
		\|E f_1 E f_2\|_{L^2(B_{N_1^2})} \lessapprox \max\lbrace 1, (N_1 N_2)^{\frac{d-3}{4}}\rbrace \prod_{j=1}^2 \left( \sum_{|\theta|=1/ {N_1}} \|E f_{j, \theta}\|_{L^4(B_{N_1^2})}^2 \right)^{1/2}.
	\end{equation}
	Clearly, our factor is strictly better when $N_2\ll N_1$, which will play an important role in controlling frequency interactions in the Zakharov system.
	
	Also note that the estimate (\ref{trivial}) provides no significant improvement over the trivial estimate (\ref{Holder}). Thus, we focus on the genuinely shell-improved range $d\ge4$.
\end{rem}
\begin{rem}\label{Sharpness}
	The bilinear decoupling inequality (\ref{bilinear estimate}) is in fact sharp up to a factor $N_2^{\epsilon}$. We will present the corresponding examples in Section 4. 
\end{rem}

Our main result can be stated as follows:
\begin{thm}\label{main bilinear}
	Conjecture \ref{c} is true.
\end{thm}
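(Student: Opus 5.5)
We follow the four ingredients announced in the abstract. Throughout, $f_1$ lives on the unit shell of thickness $1/N_1$ and $f_2$ on the shell of radius $N_2/N_1$ with the same thickness, and both are decoupled into $1/N_1$-caps on $B_{N_1^2}$. Since every cap $\theta$ meeting the support of $f_2$ lies in the ball of radius $\sim N_2/N_1$, the second factor has only $\sim N_2^{d-1}$ relevant caps. So the loss should be measured in powers of $N_2$ rather than $N_1$. This is the source of the improvement over (\ref{Holder}).

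\textbf{Step 1: lossless $L^2$ cap--plate decomposition.} Partition the unit shell into angular sectors $\tau$ of aperture $N_2/N_1$; each $\tau$ is a plate of dimensions $(N_2/N_1)^{d-1}\times 1/N_1$. For each such plate, $f_1|_\tau + \mathrm{supp} f_2$ is contained in a set whose translates, as $\tau$ varies, have $O(1)$ overlap in the $(\xi,\tau)$-Fourier space of $\R^{d+1}$. The reason is that the paraboloid lift of $\tau$ plus the lift of the small ball is essentially a slab, and distinct angular directions give disjoint slabs. Orthogonality on $B_{N_1^2}$, up to weights, then gives
\[
\|Ef_1Ef_2\|_{L^2(B_{N_1^2})}^2\lesssim \sum_\tau \|Ef_{1,\tau}Ef_2\|_{L^2(w_{B_{N_1^2}})}^2 .
\]
This reduces the problem to an interaction between one plate $\tau$ and the whole of $f_2$.

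\textbf{Step 2: one-direction lifting and localized shell-type decoupling.} Inside a single $\tau$, the radial variable is frozen to thickness $1/N_1$. After parabolic rescaling by $N_1/N_2$ in the $d-1$ angular directions, $f_{1,\tau}$ becomes essentially a function on a $(d-1)$-dimensional piece of paraboloid, lifted trivially in the remaining direction. We then apply Hölder, $\|Ef_{1,\tau}Ef_2\|_2\le \|Ef_{1,\tau}\|_4\|Ef_2\|_4$.

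For $Ef_{1,\tau}$, we decouple $\tau$ into the $(N_2)^{d-1}$ caps of size $1/N_1$. This uses the linear decoupling (\ref{linear}), or Bourgain--Demeter (\ref{BD}) in dimension $d-1$ at $p=4$, applied at scale $N_2^2$ after rescaling, which yields factors like $N_2^{\frac{d-1}{4}\cdot\ldots}$.

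For $Ef_2$, its support is the shell of radius $r=N_2/N_1$ and thickness $1/N_1$. Rescaling by $N_1/N_2$ gives a unit shell of thickness $1/N_2$ on a ball of radius $N_1^2(N_2/N_1)^2=N_2^2$ in time. This is exactly the setting of (\ref{linear}) with $R=N_2^2$, and at $p=4$ it gives the factor $N_2^{\frac{d-1}{2}-\frac{d+1}{4}}=N_2^{\frac{d-3}{4}}$ for $d\ge 4$.

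\textbf{Step 3: complementary localization of the second factor.} Applying Hölder naively in Step 2 wastes the fact that $Ef_2$ is spatially spread while $Ef_{1,\tau}$ concentrates on tubes. To recover the lost factor, we localize $f_2$ into caps $\theta_2$ and, for each $\tau$, separate two groups: those $\theta_2$ transversal to $\tau$'s normal direction, and those essentially tangent to it.

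Transversal pairs admit a bilinear $L^2$ (Córdoba/Bourgain-type) bound. Counting the admissible caps gives a gain $(N_2^2/N_1)^{1/4}$ when $N_2^2>N_1$ and none otherwise. This produces the factor $(1+N_2^2/N_1)^{1/4}$ and the transition at $N_1=N_2^2$.

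For tangential pairs, the support of $f_2$ intersecting a slab in the direction of $\tau$ has lower dimension. We use the shell decoupling in dimension $d-1$ there.

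Finally, we sum over $\tau$ with Cauchy--Schwarz, which is exactly $\ell^2$ compatible with the right side of (\ref{bilinear estimate}), and multiply the exponents. The target is $N_2^{\frac{2d-7}{4}}=N_2^{\frac{d-3}{4}}\cdot N_2^{\frac{d-4}{4}}$. For $d=2,3$ all exponents become nonpositive and (\ref{trivial}) follows.

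\textbf{Main obstacle.} I expect the main difficulty to be Step 3, namely matching the exponents exactly. The lifting in Step 2 must cost only $N_2^{(d-4)/4}$ and no power of $N_1$, and the transversality count must yield precisely $(1+N_2^2/N_1)^{1/4}$. My first attempt would be to optimize the aperture of $\tau$: choose between $N_2/N_1$ and $N_1^{-1/2}$ according to whether $N_2^2\lessgtr N_1$, and check both regimes separately.
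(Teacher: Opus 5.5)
There is a genuine gap, and it is in Step~3. Steps~1--2 are correct (and already give \eqref{trivial} when $d\le 3$). For $d\ge4$, however, they only yield $N_2^{\frac{d-3}{4}}\cdot N_2^{\frac{d-3}{4}}=N_2^{\frac{2d-6}{4}}$. This misses \eqref{bilinear estimate} by $N_2^{1/4}$ when $N_1\ge N_2^2$, and by $(N_1/N_2)^{1/4}$ when $N_1\le N_2^2$.

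More care in the linear step cannot close this, because $N_2^{(d-3)/4}$ is the sharp constant for the full $d$-dimensional small shell (and for the full $v$-cap of the unit shell). The improvement therefore has to be extracted from the bilinear $L^2$ norm before H\"older is applied. Concretely, you need an orthogonality that splits $\|Ef_{1,\tau}Ef_2\|_{L^2}$ into a square sum over pieces of $f_2$ whose supports are effectively $(d-1)$-dimensional. Step~3 does not supply one. The ``transversal'' half is not stated as any estimate compatible with the $\ell^2L^4$ right-hand side. The accounting is also inverted: $(1+N_2^2/N_1)^{1/4}\ge1$ is a \emph{loss} relative to $N_2^{(2d-7)/4}$, not a gain from counting caps. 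Tuning the aperture of $\tau$ does not help either. Below aperture $v=N_2/N_1$ the sets $\supp f_{1,\tau}+\supp f_2$ cease to have bounded overlap, and above it the constant for $f_{1,\tau}$ only grows.

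The missing idea is the lossless cap--plate decomposition of Lemma~\ref{plate} and Remark~\ref{84}. After localizing $f_1$ to a $v$-cap around $e_d$, slice \emph{both} factors into plates $\{|\xi_d-a|\lesssim\widetilde v\}$ perpendicular to $e_d$, with $\widetilde v=\max\{\delta,v^2\}$ and $\delta=1/N_1$. Then
\[
\|Ef_1Ef_2\|_{L^2(w_{B_{N_1^2}})}^2\lesssim\sum_{\tau_1,\tau_2}\|Ef_{1,\tau_1}Ef_{2,\tau_2}\|_{L^2(w_{B_{N_1^2}})}^2 .
\]
This holds because $\xi_1-\xi_3=\xi_4-\xi_2$ and $|\xi_1|^2-|\xi_3|^2=|\xi_4|^2-|\xi_2|^2$ force $|\xi_{1,d}-\xi_{3,d}|\lesssim v^2$. (For $N_1\ge N_2^2$ this is plain separation in $\xi_d$, since $f_{1,\tau}$ already has $\xi_d$-width $O(\delta)$; for $N_1\le N_2^2$ it genuinely uses the paraboloid.) The resolution $v^2$, compared with the target thickness $\delta$, is the true source of the threshold $N_1=N_2^2$.

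Next, a $\delta$-slice of the radius-$v$ shell at height $v-t$ projects onto a $(d-1)$-dimensional annulus of radius $K\delta$ and thickness $L\delta$, where $K=\sqrt{vt}/\delta$, $L=\sqrt{v/t}$ and $KL=N_2$. At the pole this is a ball of radius $\sqrt{v\delta}$. Lemma~\ref{lifting} plus localized shell decoupling in dimension $d-1$ then gives the constant $L^{1/4}K^{\frac{d-4}{4}}\le N_2^{\frac{d-4}{4}}$ for $d\ge5$. This is the missing $N_2^{1/4}$, and it is obtained for every slice, not only the ``tangential'' ones.

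When $v^2>\delta$, the orthogonality stops at $(v,v^2)$-plates. Reassembling their $H=N_2^2/N_1$ slices of thickness $\delta$ via \eqref{L14-sum} costs $H^{1/4}$, which is precisely the factor $(1+N_2^2/N_1)^{1/4}$.

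Finally, in $d=4$ the slice constant is $L^{1/4}$, and pairing it with $N_2^{1/4}$ for $f_{1,\tau}$ overshoots (for example $N_2^{3/8}$ at the pole). You must also localize $f_{1,\tau}$ to balls of radius $K\delta$, matching the diameter of the $f_2$ piece, via \eqref{product-orthogonality}. Then $f_1$ costs only $K^{1/4}$, and $K^{1/4}L^{1/4}=N_2^{1/4}$.
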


	As a direct corollary, we obtain the following sharp bilinear Strichartz estimate:
		\begin{coro}\label{bilinear Strichartz}
			Let $d\ge 4$, $N_1\ge N_2\gg 1$, and $\phi_1,\phi_2\in L^2(\T^d)$ such that
			\begin{equation*}
				\supp \widehat{\phi_1}\subseteq \lbrace k\in\Z^d: N_1-O_d(1)\le |k|\le N_1+O_d(1)\rbrace,
			\end{equation*}
			\begin{equation*}
				\supp \widehat{\phi_2}\subseteq \lbrace k\in\Z^d: N_2-O_d(1)\le |k|\le N_2+O_d(1)\rbrace.
			\end{equation*}
			Then, for any $\epsilon>0$, we have the following estimate:
			\begin{equation}\label{bilinear Strichartz true}
				\|e^{it\Delta}\phi_1 e^{it\Delta}\phi_2\|_{L_{t,x}^2([0,1]\times\T^d)}\lesssim_{\epsilon} N_2^{\epsilon}N_2^{\frac{2d-7}{4}}\left(1+\frac{N_2^2}{N_1}\right)^{1/4} \|\phi_1\|_{L^2(\T^d)}\|\phi_2\|_{L^2(\T^d)}.
			\end{equation}
		\end{coro}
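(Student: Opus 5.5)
The plan is to deduce Corollary \ref{bilinear Strichartz} from Theorem \ref{main bilinear} (in the form \eqref{bilinear estimate}, since $d\ge 4$) by the standard rescaling-and-periodization argument that turns decoupling into periodic Strichartz estimates.

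\textbf{Rescaling.} Write $e^{it\Delta}\phi_j(x)=\sum_k \widehat{\phi_j}(k)e^{2\pi i(k\cdot x+t|k|^2)}$, with the usual normalization of the time variable. Substitute $k=N_1\xi$, $x=N_1 y$ and $t=N_1^2 s$.

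\emph{Supports.} The frequencies of $\phi_1$ become points $\xi\in N_1^{-1}\Z^d$ with $1-O(1/N_1)\le|\xi|\le 1+O(1/N_1)$. The frequencies of $\phi_2$ become points with $\big||\xi|-N_2/N_1\big|\le O(1/N_1)$. These are exactly the shells in Conjecture \ref{c}, up to harmless $O_d(1)$ enlargements, which are absorbed by a finite covering.

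\emph{Domain.} The region $(t,x)\in[0,1]\times\T^d$ becomes $s\in[0,N_1^{-2}]$ with $y$ ranging over a period cell of size $N_1^{-1}$.

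\emph{Periodicity.} The function $G_j(y,s):=\sum_k\widehat{\phi_j}(k)e^{2\pi i(N_1 k/N_1\cdot y N_1 + \dots)}$ is $1$-periodic in $x$ and in $t$. So $L^2$ norms over $[0,1]\times\T^d$ equal averaged norms over $[0,N_1^2]\times[0,N_1]^d$ in the original variables. After one further rescaling, these become averaged norms over a ball $B_{N_1^2}$ of the rescaled extension operators.

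\textbf{Applying the theorem.} To apply Theorem \ref{main bilinear}, replace the discrete measure by $f_j=\sum_k \widehat{\phi_j}(k)\,\delta$-approximations. Concretely, take $f_j$ to be the sum over $k$ of $\widehat{\phi_j}(k)$ times a normalized bump of radius $N_1^{-2}$ centred at $k/N_1$. The change from the exponential sum on $B_{N_1^2}$ is then negligible, because the time and space scales are at most $N_1^2$. Alternatively, insert a smooth weight $w_{B_{N_1^2}}$ and run the usual approximation. Theorem \ref{main bilinear} then bounds the averaged $L^2$ norm of the product by the stated constant times $\prod_j(\sum_\theta\|Ef_{j,\theta}\|_{L^4}^2)^{1/2}$, with the normalizations in $|B|$ tracked.

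\textbf{Evaluating the cap pieces.} Each cap $\theta$ of size $1/N_1$ contains $O(1)$ lattice points $k/N_1$. Hence $Ef_{j,\theta}$ is essentially a single exponential (or $O(1)$ of them) times the bump's transform. Its normalized $L^4$ norm on $B_{N_1^2}$ is therefore $\lesssim|\widehat{\phi_j}(k)|$, and summing squares gives $\|\phi_j\|_{L^2}$ by Plancherel.

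I expect the only real obstacle to be bookkeeping. The normalizations must be consistent across the three steps: the $|B_{N_1^2}|^{-1/2}$ factor in the $L^2$ norm of the product, against $|B|^{-1/4}$ for each $L^4$ norm, so that the powers cancel exactly. One also has to confirm that the $O_d(1)$ thickening of the shells and the $N_1^{-2}$ bump width introduce no loss beyond constants. I would first verify the scaling on the model case where each $\phi_j$ is a single exponential, and then pass to the general case.
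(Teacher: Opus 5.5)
Your plan is the paper's own discrete-restriction argument, and it works once the bookkeeping is fixed. The steps match: rescale frequencies by $N_1$, use periodicity to reach an $N_1^2$-scale domain, and apply Theorem \ref{main bilinear} to smooth approximations of $h_j=\sum_k\widehat{\phi_j}(k)\delta_{k/N_1}$. You then bound each $Ef_{j,\theta}$ pointwise by a constant times $(\sum_{k/N_1\in\theta}|\widehat{\phi_j}(k)|^2)^{1/2}$ and finish with Plancherel. The normalizations do cancel as you predict. However, the bookkeeping you deferred is wrong as written in three places, each easily repaired.

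\textbf{The substitution is inverted.} With $k=N_1\xi$ you need $X=N_1x$ and $T=N_1^2t$, so that $k\cdot x+t|k|^2=X\cdot\xi+T|\xi|^2$. Then $[0,1]\times\T^d$ becomes a box of time-length $O(N_1^2)$ and spatial side $N_1$, not a box of size $N_1^{-2}\times N_1^{-1}$.

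\textbf{The passage to an $N_1^2$-ball is not a rescaling.} That box is too short in space. A rescaling cannot fix this: a parabolic rescaling preserves the shape $T\sim|X|^2$, and any other rescaling breaks the paraboloid and the pairing between cap size $1/N_1$ and radius $N_1^2$. The remedy is the $N_1$-periodicity of $Eh_j$ in $X$. Taking $\sim N_1^d$ spatial translates gives a region of diameter $O(N_1^2)$, covered by $O_d(1)$ balls of radius $N_1^2$. The resulting factor $N_1^{-d/2}$, together with the Jacobian $N_1^{-(d+2)/2}$, exactly cancels the $|B_{N_1^2}|^{1/4}=N_1^{(d+1)/2}$ from each of the two cap sums. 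Time periodicity is unnecessary, and whether it holds depends on how you normalize $e^{it\Delta}$. If you do use it, you only get more $N_1^2$-balls, each handled identically.

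\textbf{The bump radius must go to zero.} Bumps of radius exactly $N_1^{-2}$ leave an $O(1)$ phase error on $B_{N_1^2}$, which is not negligible. Take radius $r\to0$ instead and pass to the limit, as the paper does. This is legitimate because the decoupling constant and the pointwise bound $\|Ef_{j,\theta}\|_{L^\infty}\le\|f_{j,\theta}\|_{L^1}$ are uniform in $r$. The latter is controlled by the $O(1)$ coefficients $|\widehat{\phi_j}(k)|$ with $k/N_1$ near $\theta$.
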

		\begin{proof}
			The proof follows the standard discrete restriction argument originating in \cite{bourgain2013moment,1}. We also refer to \cite[Section 2]{fan2018bilinear} for the bilinear version.
			
			By decomposing each of the two frequency shells into \(O_d(1)\)
			subshells, and replacing \(N_j\) by \(N_j+O_d(1)\), it suffices to
			consider the case
			\[
			\operatorname{supp}\widehat{\phi_j}
			\subset
			\left\{
			k\in\mathbb Z^d:
			\bigl||k|-N_j\bigr|\leq 1
			\right\},
			\qquad j=1,2.
			\]
			Since \(N_2\gg1\), this finite decomposition does not affect the
			claimed estimate.
			
			We identify \(\mathbb T^d\) with \((\mathbb R/\mathbb Z)^d\) and use
			the Fourier series
			\[
			\phi_j(x)
			=
			\sum_{k\in\mathbb Z^d}
			\widehat{\phi_j}(k)e^{2\pi i k\cdot x}.
			\]
			Thus,
			\[
			e^{it\Delta}\phi_j(x)
			=
			\sum_{k\in\mathbb Z^d}
			\widehat{\phi_j}(k)
			e^{2\pi i k\cdot x-4\pi^2it|k|^2}.
			\]
			
			Define the discrete measures
			\[
			h_j
			:=
			\sum_{k\in\mathbb Z^d}
			\widehat{\phi_j}(k)\,
			\delta_{k/N_1},
			\qquad j=1,2,
			\]
			and, for a finite measure \(h\) on \(\mathbb R^d\), define
			\begin{equation*}
				Eh(X,T)
				:=
				\int_{\mathbb R^d}
				e^{2\pi i(X\cdot\xi+T|\xi|^2)}
				\,dh(\xi).
			\end{equation*}
			Then we have
			\begin{equation}\label{2.1}
				e^{it\Delta}\phi_j(x)
				=
				Eh_j(N_1x,-2\pi N_1^2t).
			\end{equation}
			
			Strictly speaking, the assumed shell-type decoupling estimate is
			stated for functions rather than measures. One may replace each
			Dirac mass above by an \(L^1\)-normalized smooth bump of radius
			\(o(N_1^{-2})\), apply the estimate uniformly, and then pass to the
			limit. We shall use this standard approximation without further
			comment.
			
			The Fourier support assumptions imply
			\[
			\operatorname{supp}h_1
			\subset
			\left\{
			\xi\in\mathbb R^d:
			1-\frac1{N_1}
			\leq|\xi|
			\leq
			1+\frac1{N_1}
			\right\},
			\]
			and
			\[
			\operatorname{supp}h_2
			\subset
			\left\{
			\xi\in\mathbb R^d:
			\frac{N_2}{N_1}-\frac1{N_1}
			\leq|\xi|
			\leq
			\frac{N_2}{N_1}+\frac1{N_1}
			\right\}.
			\]
			Hence \(h_1,h_2\) satisfy the support assumptions in
			Conjecture~\ref{c}.
			
			Set
			\[
			I:=[-2\pi N_1^2,0],
			\qquad
			Q_0:=I\times[0,N_1]^d.
			\]
			The change of variables
			\[
			X=N_1x,
			\qquad
			T=-2\pi N_1^2t
			\]
			in \eqref{2.1} gives
			\begin{equation}\label{2.2}
				\left\|
				e^{it\Delta}\phi_1
				e^{it\Delta}\phi_2
				\right\|_{L^2([0,1]\times\mathbb T^d)}
				\sim
				N_1^{-\frac{d+2}{2}}
				\left\|
				Eh_1Eh_2
				\right\|_{L^2(Q_0)}.
			\end{equation}
			
			For each \(m\in\mathbb Z^d\), the functions \(Eh_j\) satisfy
			\[
			Eh_j(X+N_1m,T)=Eh_j(X,T),
			\]
			since the frequencies of \(h_j\) belong to \(N_1^{-1}\mathbb Z^d\).
			Let
			\[
			L:=\lfloor N_1\rfloor
			\]
			and define
			\[
			\Omega
			:=
			I\times[0,LN_1]^d.
			\]
			Since \(\Omega\) consists of \(L^d\) spatial translates of \(Q_0\),
			the spatial periodicity gives
			\begin{equation}\label{2.3}
				\left\|
				Eh_1Eh_2
				\right\|_{L^2(\Omega)}
				=
				L^{d/2}
				\left\|
				Eh_1Eh_2
				\right\|_{L^2(Q_0)}.
			\end{equation}
			
			The diameter of \(\Omega\) is \(O_d(N_1^2)\). We may therefore cover
			\(\Omega\) by \(O_d(1)\) translated $N_1^2$-balls
			\[
			\Omega\subset\bigcup_{\alpha=1}^{C_d}B_\alpha.
			\]
			By translation invariance, Conjecture~\ref{c} applies on every
			\(B_\alpha\), and hence
			\begin{equation}\label{2.4}
				\|Eh_1Eh_2\|_{L^2(B_\alpha)}
				\lesssim_{\epsilon}
				N_2^\epsilon
				N_2^{\frac{2d-7}{4}}\left(1+\frac{N_2^2}{N_1}\right)^{1/4}
				\prod_{j=1}^2
				\left(
				\sum_{|\theta|=1/N_1}
				\|Eh_{j,\theta}\|_{L^4(B_\alpha)}^2
				\right)^{1/2}.
			\end{equation}
			
			The points of \(N_1^{-1}\mathbb Z^d\) are \(N_1^{-1}\)-separated.
			Consequently, every cap \(\theta\) contains at most \(O_d(1)\) points
			of the form \(k/N_1\). Therefore, by Cauchy--Schwarz,
			\[
			|Eh_{j,\theta}(X,T)|
			\lesssim_d
			\left(
			\sum_{\substack{k\in\mathbb Z^d\\ k/N_1\in\theta}}
			|\widehat{\phi_j}(k)|^2
			\right)^{1/2}.
			\]
			It follows that
			\begin{equation*}
				\sum_{\theta}
				\|Eh_{j,\theta}\|_{L^4(B_\alpha)}^2
				\lesssim_d
				|B_\alpha|^{1/2}
				\sum_{\theta}
				\sum_{\substack{k\in\mathbb Z^d\\ k/N_1\in\theta}}
				|\widehat{\phi_j}(k)|^2
				\lesssim_d
				|B_\alpha|^{1/2}
				\sum_{k\in\mathbb Z^d}
				|\widehat{\phi_j}(k)|^2.
			\end{equation*}
			Since $|B_\alpha|\sim N_1^{2(d+1)}$, by Parseval,
			\[
			\sum_{k\in\mathbb Z^d}
			|\widehat{\phi_j}(k)|^2
			=
			\|\phi_j\|_{L^2(\mathbb T^d)}^2,
			\]
			we obtain
			\begin{equation}\label{2.5}
				\left(
				\sum_{\theta}
				\|Eh_{j,\theta}\|_{L^4(B_\alpha)}^2
				\right)^{1/2}
				\lesssim_d
				N_1^{\frac{d+1}{2}}
				\|\phi_j\|_{L^2(\mathbb T^d)}.
			\end{equation}
			
			Substituting \eqref{2.5} into \eqref{2.4} and then summing over the
			\(O_d(1)\) balls \(B_\alpha\), we obtain
			\begin{equation}\label{2.6}
				\|Eh_1Eh_2\|_{L^2(\Omega)}
				\lesssim_{\epsilon}
				N_1^{d+1}
				N_2^\epsilon
				N_2^{\frac{2d-7}{4}}\left(1+\frac{N_2^2}{N_1}\right)^{1/4}
				\|\phi_1\|_{L^2(\mathbb T^d)}
				\|\phi_2\|_{L^2(\mathbb T^d)}.
			\end{equation}
			
			Combining \eqref{2.2}, \eqref{2.3}, and \eqref{2.6}, we conclude that
			\begin{equation*}
				\left\|
				e^{it\Delta}\phi_1
				e^{it\Delta}\phi_2
				\right\|_{L^2([0,1]\times\mathbb T^d)}
				\lesssim_{\epsilon}
				N_1^{-\frac{d+2}{2}}
				L^{-\frac d2}
				N_1^{d+1}
				N_2^\epsilon
				N_2^{\frac{2d-7}{4}}\left(1+\frac{N_2^2}{N_1}\right)^{1/4}
				\|\phi_1\|_{L^2(\mathbb T^d)}
				\|\phi_2\|_{L^2(\mathbb T^d)}.
			\end{equation*}
			Since \(L\sim N_1\),
			\[
			N_1^{-\frac{d+2}{2}}
			L^{-\frac d2}
			N_1^{d+1}
			\sim 1.
			\]
			Therefore,
			\[
			\left\|
			e^{it\Delta}\phi_1
			e^{it\Delta}\phi_2
			\right\|_{L^2([0,1]\times\mathbb T^d)}
			\lesssim_{\epsilon}
			N_2^\epsilon
			N_2^{\frac{2d-7}{4}}\left(1+\frac{N_2^2}{N_1}\right)^{1/4}
			\|\phi_1\|_{L^2(\mathbb T^d)}
			\|\phi_2\|_{L^2(\mathbb T^d)}.
			\]
			This proves Corollary \ref{bilinear Strichartz}.
			
		\end{proof}
	
	\begin{rem}
		For comparison, we recall the result in \cite{fan2018bilinear}. In fact, \cite{fan2018bilinear} established the following bilinear decoupling inequality and the corresponding bilinear Strichartz estimate:
		\begin{itemize}
			\item 
			Given \(N_1 \geq N_2 \gg 1\), let \(f_1:\R^d\to \C\) be supported on  
			\begin{equation*}
				\lbrace \xi\in\R^d: |\xi|\sim 1\rbrace,
			\end{equation*}
			and let \(f_2:\R^d\to \C\) be supported on 
			\begin{equation*}
				\lbrace \xi\in\R^d: |\xi|\sim \frac{N_2}{N_1}\rbrace.
			\end{equation*}
			For a finitely overlapping covering of the ball \(B = \{|\xi| \leq 1\}\) by caps \(\{\theta\}\), \(|\theta| = 1/{ N_1}\), we have the following estimates. For any small \(\epsilon > 0\), when \( d=2\),
			\begin{equation*}
				\|E f_1 E f_2\|_{L^2(B_{N_1^2})} \lesssim_{\epsilon} (N_2)^{\epsilon} \prod_{j=1}^2 \left( \sum_{|\theta|=1/{N_1}} \|E f_{j, \theta}\|_{L^4(B_{N_1^2})}^2 \right)^{1/2},
			\end{equation*}
			and when \(d \ge 3\),
			\begin{equation}\label{FSWW}
				\|E f_1 E f_2\|_{L^2(B_{N_1^2})} \lesssim_\epsilon (N_2)^\epsilon \left( N_2^{d-3}+ \frac{N_2^{d-1}}{N_1} \right)^{1/2} \prod_{j=1}^2 \left( \sum_{|\theta|=1/ {N_1}} \|E f_{j, \theta}\|_{L^4(B_{N_1^2})}^2 \right)^{1/2}.
			\end{equation}

			\item 
			Let $d\ge 2$, $N_1\ge N_2\gg 1$, and $\phi_1,\phi_2\in L^2(\T^d)$ such that 
			\begin{equation*}
				\supp \widehat{\phi_1}\subseteq \lbrace k\in\Z^d:  |k|\sim N_1\rbrace,
			\end{equation*}
			\begin{equation*}
				\supp \widehat{\phi_2}\subseteq \lbrace k\in\Z^d:  |k|\sim N_2\rbrace.
			\end{equation*}
			Then, for any $\epsilon>0$, we have the following estimates:
			\begin{equation*}
				\|e^{it\Delta}\phi_1 e^{it\Delta}\phi_2\|_{L_{t,x}^2([0,1]\times\T^d)}\lesssim N_2^{\epsilon} \|\phi_1\|_{L^2(\T^d)}\|\phi_2\|_{L^2(\T^d)}, \quad d=2,
			\end{equation*}
			\begin{equation}\label{82}
				\|e^{it\Delta}\phi_1 e^{it\Delta}\phi_2\|_{L_{t,x}^2([0,1]\times\T^d)}\lesssim N_2^{\epsilon}\left( N_2^{d-3}+ \frac{N_2^{d-1}}{N_1} \right)^{1/2}\|\phi_1\|_{L^2(\T^d)}\|\phi_2\|_{L^2(\T^d)}, \quad d\ge 3.
			\end{equation}
		\end{itemize}
		Comparing \eqref{FSWW} with \eqref{bilinear estimate}, the two coefficients satisfy
		\begin{equation*}
		\frac{
		\left(N_2^{d-3}+\frac{N_2^{d-1}}{N_1}\right)^{1/2}
		}{
		N_2^{\frac{2d-7}{4}}\left(1+\frac{N_2^2}{N_1}\right)^{1/4}
		}
		=
		N_2^{1/4}\left(1+\frac{N_2^2}{N_1}\right)^{1/4}.
		\end{equation*}
		Thus the shell restriction yields an additional gain of at least $N_2^{1/4}$ over the corresponding thick-annulus estimate of Fan--Staffilani--Wang--Wilson.
	\end{rem}
\vspace{10pt}
		\noindent
		\textbf{Organization.}
		
		This paper is organized as follows. In Section~2, we perform some reductions to Conjecture \ref{c}. In Section~3, we prove the polar and equatorial cases in the two regimes $N_1\ge N_2^2$ and $N_1\le N_2^2$. In Section~4, we give two examples demonstrating the sharpness of the coefficient in \eqref{bilinear estimate}. In Section~5, we discuss applications to Fourier analysis and dispersive PDE. Appendix~A gives the proof for the general position case, while Appendix~B gives a simpler proof of Fan--Staffilani--Wang--Wilson's results.
		
		\vspace{10pt}
		\noindent
		\textbf{Notation.}
		\begin{itemize}
			\item By $u\in C([0,T]; B) \; ( \mbox{or} \; L^{p}([0,T];B))$ for a Banach space $B,$ we mean that $u$ is a continuous $(\mbox{or} \; L^{p})$ map from $[0,T]$ to $B;$ see \cite[page 301]{evans2022partial}.
			\item By $A\lesssim B$ (resp. $A\sim B$), we mean that there exists a positive constant $C$ such that $A\le CB$ (resp. $C^{-1}B\le A \le C B$). If the constant $C$ depends on $p,$ then we write $A\lesssim_{p}B$ (resp. $A\sim_{p} B$).
			\item By $A\ll B$, we mean that $\frac{A}{B}$ is sufficiently small.
			\item By $A\lessapprox B$, we mean that $A\lesssim_{\epsilon} R^{\epsilon} B$ or $A\lesssim_{\epsilon} N_2^{\epsilon} B$ for any $\epsilon>0$.
		\end{itemize}
		
	\vspace{15pt}
	\section{Further reductions}
	In this section, we perform a series of reductions of the shell-type bilinear decoupling estimate (\ref{bilinear estimate}), all of which preserve equivalence with the original estimate.
	
	First, by applying $L^2$-orthogonality and rotational invariance, we obtain the following reduction ($e_d:=(0,0,\cdots,0, 1)$): 
	\begin{conj}\label{821}
		Given \(N_1 \gg N_2 \geq 1\), let \(f_1:\R^d\to \C\) be supported on  
		\begin{equation}\label{new region}
			\lbrace \xi\in\R^d: 1-\frac{1}{N_1}\le |\xi|\le 1+\frac{1}{N_1}\rbrace\cap B\left(e_d, \frac{N_2}{N_1}\right),
		\end{equation}
		and let \(f_2:\R^d\to \C\) be supported on 
		\begin{equation*}
			\lbrace \xi\in\R^d: \frac{N_2}{N_1}-\frac{1}{N_1}\le |\xi|\le \frac{N_2}{N_1}+\frac{1}{N_1}\rbrace.
		\end{equation*}
		For a finitely overlapping covering of the ball \(B = \{|\xi| \leq 1\}\) by caps \(\{\theta\}\), \(|\theta| = 1/{ N_1}\), we have the following estimate. For any small \(\epsilon > 0\), when \(d \ge 4\),
		\begin{equation*}
			\|E f_1 E f_2\|_{L^2(\omega_{B_{N_1^2}})} \lesssim_\epsilon (N_2)^\epsilon N_2^{\frac{2d-7}{4}}\left(1+\frac{N_2^2}{N_1}\right)^{1/4} \prod_{j=1}^2 \left( \sum_{|\theta|=1/ {N_1}} \|E f_{j, \theta}\|_{L^4(\omega_{B_{N_1^2}})}^2 \right)^{1/2}.
		\end{equation*}
	\end{conj}
	
	\begin{rem}
		The reduction from (\ref{region1}) to (\ref{new region}) is justified by the observation that (\ref{region1}) admits a cover by $\frac{N_2}{N_1}$-balls with finite overlap. Furthermore, since the region is radially symmetric, it suffices to restrict to the smaller region (\ref{new region}).
		
		We also replace $L^2(B_{N_1^2})$ with its weighted version $L^{2}(\omega_{B_{N_1^2}})$. Here we can take 
		\begin{equation*}
			\omega_{B_{N_1^2}}(x):=\omega\left(\frac{x}{N_1^2}\right),
		\end{equation*}
		where $\widehat{\omega}\subseteq B(0,2)$, $\omega(x) \ge1, \forall x\in B(0,1).$ The equivalence follows from a standard reduction argument. We refer to \cite[Remark 5.2]{1} for more details.
		
		We also record a basic property that is useful in establishing estimates involving such weights.
		
		\begin{itemize}
			\item 
			
			Let \( B_R \) be a ball centered at the origin, and let \( \mu_{B_R} \) be a function whose Fourier transform is comparable to \( 1/(m(B_{1/R})) \) on \( B_{1/R} \) and supported in \( B_{2/R} \). Then \( \mu_{B_R} \sim 1 \) on \( B_R \) and decays faster than any polynomial outside \( B_R \). Moreover, \( \mu_{B_R}^2 \) is positive, decays faster than any polynomial outside \( B_R \), and its Fourier support lies in \( B_{4/R} \).
			
			We cover the whole space by translates \( B' \) of \( B_R \). Let \( \mu_{B'} \) denote the corresponding translate of \( \mu_{B_R} \), and set \( w_{B_R}(B') = \max_{x\in B'} w_{B_R}(x) \). The following useful estimate holds:	 
			\begin{equation}\label{weight}
				w_{B_R}(x) \le \sum_{B'} w_{B_R}(B') \chi_{B'}(x)
				\le \sum_{B'} w_{B_R}(B') \mu_{B'}^2(x)
				\lesssim w_{B_R}(x). 
			\end{equation}
			
			The last inequality follows from the fact that \( \mu_{B'} \) decays faster than any polynomial outside \( B' \).
		\end{itemize}

	\end{rem}
		
       Next, we use the following lemma from \cite{fan2018bilinear,ramos2018trilinear}, which provides a ``cap-plate'' decomposition without any loss. Here, a $d$-dimensional rectangle $\tau$ is called a $(v,\widetilde{v})$-plate if $\widetilde{v}\le v$ and its side lengths are
       \begin{equation*}
       	\underbrace{v \times v \times \cdots \times v}_{d-1}\times \widetilde{v} .
       \end{equation*}
       
       \begin{lemma}\label{plate}
       	Assume \( 0<v< 1 \). Let \( f_1 \) be supported in a cap of radius \(v\) centered at \(e_d\), and let \( f_2 \) be supported in a cap of radius \(v\) centered at \(0\). Given a covering \(\{ \tau_i \}\) of \(\operatorname{supp} f_i\) by \((v,v^2)\)-plates, with the shorter side along the \(e_{d}\)-direction, the following decoupling estimate holds for every \(R>v^{-2}\):
       	\[
       	\int_{\R^{d+1}} |E f_1 \, E f_2|^2 w_{B_R} \; dx \lesssim \sum_{\tau_1,\tau_2} \int_{\R^{d+1}} |E f_{1,\tau_1} \, E f_{2,\tau_2}|^2 w_{B_R} \; dx.
       	\]
       \end{lemma}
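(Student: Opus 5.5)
The plan is to expand the square and use Plancherel in $\R^{d+1}$. The weight $w_{B_R}$ has Fourier transform supported in a ball of radius $O(1/R)$, and $R>v^{-2}$. So the off-diagonal terms
\[
\int E f_{1,\tau_1}E f_{2,\tau_2}\,\overline{E f_{1,\tau_1'}E f_{2,\tau_2'}}\,w_{B_R}
\]
vanish unless the frequency supports of the two products come within $O(v^{2})$ of each other. The diagonal terms give exactly the right-hand side. It therefore suffices to show that each pair $(\tau_1,\tau_2)$ interacts with only $O(1)$ pairs $(\tau_1',\tau_2')$ under this condition, and then apply Cauchy--Schwarz with the finite-overlap bound.

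The first step is to write the Fourier support of $Ef_{1,\tau_1}Ef_{2,\tau_2}$ as the sumset of the two paraboloid lifts $\{(\xi,|\xi|^2):\xi\in\tau_1\}+\{(\eta,|\eta|^2):\eta\in\tau_2\}$. Suppose $(\xi_1+\xi_2,|\xi_1|^2+|\xi_2|^2)$ and $(\xi_1'+\xi_2',|\xi_1'|^2+|\xi_2'|^2)$ agree up to $O(v^2)$, with $\xi_j\in\tau_j$ and $\xi_j'\in\tau_j'$. I want to deduce that $\tau_j$ and $\tau_j'$ are neighbours, i.e. that $\xi_j-\xi_j'$ has horizontal part $O(v)$ and $e_d$-part $O(v^2)$.

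The horizontal part is straightforward. All of $\xi_1,\xi_1',\xi_2,\xi_2'$ lie in $v$-caps, so every horizontal component of $\xi_j-\xi_j'$ is $O(v)$ automatically. The plates have horizontal side $v$, which is the full cap width, so in the horizontal directions there is essentially one plate and only the $e_d$-coordinates need control.

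For the $e_d$-coordinates, write $\xi_1=e_d+a$ and $\xi_1'=e_d+a'$ with $|a|,|a'|\le v$, and put $\delta=\xi_1-\xi_1'$. Since $\xi_1+\xi_2=\xi_1'+\xi_2'+O(v^2)$, we get $\xi_2-\xi_2'=-\delta+O(v^2)$. The time component gives
\[
|\xi_1|^2-|\xi_1'|^2+|\xi_2|^2-|\xi_2'|^2=O(v^2).
\]
Expanding, $|\xi_1|^2-|\xi_1'|^2=\delta\cdot(\xi_1+\xi_1')=2\delta_d+O(v|\delta|)$. Also $|\xi_2|^2-|\xi_2'|^2=(\xi_2-\xi_2')\cdot(\xi_2+\xi_2')=O(v|\delta|+v^3)$, because $\xi_2,\xi_2'=O(v)$. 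Hence $2\delta_d=O(v^2)+O(v|\delta|)$. Since $|\delta|\lesssim v$, this gives $\delta_d=O(v^2)$, as required. By the same identity $\xi_2-\xi_2'$ has $e_d$-component $O(v^2)$, so $\tau_2$ and $\tau_2'$ are also neighbours.

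The main obstacle is this constant-chasing. The error term $O(v|\delta|)$ is of size $v^2$, the same order as the plate thickness, so one must check that it gives boundedly many neighbouring plates rather than a growing number. It does, since the implied constants depend only on $d$. With this bounded interaction count, Schur's test / Cauchy--Schwarz on the off-diagonal sum finishes the proof. The positivity of the weight is handled by replacing $w_{B_R}$ with $\mu_{B_R}^2$-type weights as in \eqref{weight}, which have compact Fourier support of size $O(1/R)\lesssim v^2$.
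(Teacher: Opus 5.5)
Your proposal is correct and follows essentially the same route as the paper. The paper also reduces via \eqref{weight} to nonnegative weights $\mu_{B'}^2$ with Fourier support of size $O(R^{-1})\lesssim v^2$, expands the square, and uses the spatial and temporal frequency relations for a nonvanishing quadruple to force the $e_d$-component of $\xi_1-\xi_1'$, and hence of $\xi_2-\xi_2'$, to be $O(v^2)$, so that only $O(1)$ near-diagonal plate pairs interact. The only minor difference is that the paper bounds the low-frequency term $|\xi_2|^2-|\xi_2'|^2$ trivially by $O(v^2)$, since both points lie in a $v$-cap at the origin; this is slightly quicker than your $O(v|\delta|+v^3)$ estimate, but either works.
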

       \begin{proof}
       	For completeness, we present the proof from \cite{fan2018bilinear}. By inequality (\ref{weight}), it suffices to show that, for every translate \(B'\) of \(B_R\),
       	\[
       	\int_{B'} |Ef_1 \, Ef_2|^2 \lesssim \sum_{\tau_1,\tau_2} \int |Ef_{1,\tau_1} \, Ef_{2,\tau_2}|^2 \mu_{B'}^2 .
       	\]
       	Now,
       	\[
       	\int_{B'} |Ef_1 \, Ef_2|^2
       	\leq \sum_{\tau_1,\tau_2,\tau_3,\tau_4}
       	\int_{B'} Ef_{1,\tau_1} \, Ef_{2,\tau_2} \, \overline{Ef_{1,\tau_3}} \, \overline{Ef_{2,\tau_4}} \, \mu_{B'}^2 .
       	\]
       	Write \(\xi_i \in \tau_i\) as
       	\[
       	\xi_i = \left(\xi_{i,1},\ldots,\xi_{i,d-1},\, \xi_{i,d}\right)
       	\equiv \left(\xi_i', \xi_{i,d}\right), \qquad i=1,2,3,4.
       	\]
       	Then we have
       	\[
       	|\xi_i'| \lesssim v \quad i=1,2,3,4,
       	\]
       	\[
       	|\xi_{i,d} - 1| \lesssim v, \quad i=1,3,\qquad
       	|\xi_{i,d}| \lesssim v, \quad i=2,4.
       	\]
       	Crucially, for any quadruple \(\tau_1,\tau_2,\tau_3,\tau_4\) such that
       	\[
       	\int Ef_{1,\tau_1} \, Ef_{2,\tau_2} \, \overline{Ef_{1,\tau_3}} \, \overline{Ef_{2,\tau_4}} \, \mu_B^2 \neq 0,
       	\]
       	there must exist \(\xi_i \in \tau_i\) satisfying
       	\[
       	\xi_1 - \xi_3 = \xi_4 - \xi_2 + O(R^{-1}),
       	\]
       	\[
       	|\xi_1|^2 - |\xi_3|^2 = |\xi_4|^2 - |\xi_2|^2 + O(R^{-1}).
       	\]
       	The second identity implies
       	\[
       	(\xi_{1,d} - \xi_{3,d})(\xi_{1,d} + \xi_{3,d})
       	= O(|\xi_2'|^2 + |\xi_4'|^2) + O(|\xi_1'|^2 + |\xi_3'|^2) + O(R^{-1}).
       	\]
       	Using the bounds obtained above, we get \(|\xi_{1,d} - \xi_{3,d}| \lesssim v^2\), and consequently \(|\xi_{2,d} - \xi_{4,d}| \lesssim v^2\).
       	
       	Thus, a nonvanishing integral forces \(\tau_1\) and \(\tau_3\), as well as \(\tau_2\) and \(\tau_4\), to lie within distance \(O(v^2)\) of each other, which essentially means that \(\tau_i = \tau_{i+2}\) for \(i=1,2\). Applying this observation to the above inequality proves Lemma \ref{plate}.
       \end{proof}
       \begin{rem}\label{84}
       	The above proof also shows that the following ``cap-plate'' decomposition holds:
       	\begin{itemize}
       		\item 
       		Assume \( 0<v^2\le \widetilde{v} \le v< 1\). Let \( f_1 \) be supported in a cap of radius \(v\) centered at \(e_d\), and let \( f_2 \) be supported in a cap of radius \(v\) centered at \(0\). Given a covering \(\{ \tau_i \}\) of \(\operatorname{supp} f_i\) by \((v,\widetilde{v})\)-plates, with the shorter side along the \(e_{d}\)-direction, the following decoupling estimate holds for every \(R>\widetilde{v}^{-1}\):
       		\[
       		\int_{\R^{d+1}} |E f_1 \, E f_2|^2 w_{B_R} \; dx \lesssim \sum_{\tau_1,\tau_2} \int_{\R^{d+1}} |E f_{1,\tau_1} \, E f_{2,\tau_2}|^2 w_{B_R} \; dx.
       		\]
       	\end{itemize}
       \end{rem}

       Applying (\ref{weight}) and Remark \ref{84}, we can further reduce Conjecture \ref{821} to the following equivalent Conjecture \ref{8211}.
       \begin{conj}\label{8211}
       	With the same notation as in Conjecture \ref{821}, if $\boldsymbol{N_1 \ge N_2^2}$, then for a covering \(\{ \tau_i \}\) of \(\operatorname{supp} f_i\) by $\left(\frac{N_2}{N_1},\frac{1}{N_1}\right)$-plates, with the shorter side along the \(e_{d}\)-direction, we have the following estimate: for any small \(\epsilon > 0\), when \(d \ge 4\),
       	\begin{equation}\label{case1 estimate}
       		\|E f_{1,\tau_1} E f_{2,\tau_2}\|_{L^2(\omega_{B_{N_1^2}})} \lesssim_\epsilon (N_2)^\epsilon  N_2^{\frac{2d-7}{4}} \prod_{j=1}^2 \left( \sum_{\substack{\theta\subseteq \tau_j\\ |\theta|=1/{N_1}}} \|E f_{j, \theta}\|_{L^4(\omega_{B_{N_1^2}})}^2 \right)^{1/2};
       	\end{equation}
       	if $\boldsymbol{N_1 \le N_2^2}$, then for a covering \(\{ \tau_i \}\) of \(\operatorname{supp} f_i\) by $\left(\frac{N_2}{N_1},\frac{N_2^2}{N_1^2}\right)$-plates, with the shorter side along the \(e_{d}\)-direction, we have the analogous estimate:
       	\begin{equation}\label{case2 estimate}
       		\|E f_{1,\tau_1} E f_{2,\tau_2}\|_{L^2(\omega_{B_{N_1^2}})} \lesssim_\epsilon (N_2)^\epsilon  N_2^{\frac{2d-5}{4}}N_1^{-1/4} \prod_{j=1}^2 \left( \sum_{\substack{\theta\subseteq \tau_j\\ |\theta|=1/{N_1}}} \|E f_{j, \theta}\|_{L^4(\omega_{B_{N_1^2}})}^2 \right)^{1/2},
       	\end{equation}
       	
       	where all the implicit constants above are independent of the choice of $(\tau_1, \tau_2)$.
       \end{conj}

	\vspace{15pt}
	\section{Proof of Conjecture \ref{8211} $(d\ge 4)$}
	We split our discussion into two cases: $\boldsymbol{N_1 \ge N_2^2}$ and $\boldsymbol{N_1 \le N_2^2}$. Note that there are many possible pairs $(\tau_1,\tau_2)$. For convenience, we focus on the two extreme positions of $\tau_2$: equatorial and polar, while $\tau_1$ is allowed to be arbitrary. Here, we call $\tau_2$ equatorial if the $d$-th coordinate of its center is $0$; we call $\tau_2$ polar if the $d$-th coordinate of its center is $\frac{N_2}{N_1}$.

The general-position can be handled similarly, and interested readers can find the complete proof in Appendix A. 

As in the previous section, we write
\begin{equation*}
\delta:=\frac1{N_1},\qquad v:=\frac{N_2}{N_1}=N_2\delta.
\end{equation*}
We first record some linear decoupling results that will be used repeatedly. 

\begin{lemma}[Linear lifting]\label{lifting}
Let $\Omega'\subset\R^{d-1}$, $a\in\R$, and suppose that
\begin{equation*}
\supp \; g\subseteq\left\{(\xi',\xi_d)\in \R^d:\xi'\in\Omega',\ |\xi_d-a|\lesssim\delta\right\}.
\end{equation*}
Assume that every $h$ supported in $\Omega'$ satisfies
\begin{equation*}
\|E_{\mathbb P^{d-1}}h\|_{L^4(\omega_{B_{\delta^{-2}}^d})}
\lesssim A
\left(\sum_{|\theta'|\sim\delta}\|E_{\mathbb P^{d-1}}h_{\theta'}\|_{L^4(\omega_{B_{\delta^{-2}}^d})}^2\right)^{1/2}.
\end{equation*}
Then we have
\begin{equation*}
\|E_{\mathbb{P}^d}g\|_{L^4(\omega_{B_{\delta^{-2}}^{d+1}})}
\lesssim A
\left(\sum_{|\theta|\sim\delta}\|E_{\mathbb{P}^d}g_{\theta}\|_{L^4(\omega_{B_{\delta^{-2}}^{d+1}})}^2\right)^{1/2}.
\end{equation*}
The implicit constant is independent of $\delta$, $a$, and the position of $\Omega'$.
\end{lemma}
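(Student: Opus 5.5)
The plan is to reduce to the $(d-1)$-dimensional hypothesis by freezing the $x_d$-variable, after a shear that removes the linear-in-$t$ part of the $\xi_d$ phase and a Fourier expansion that disposes of the remaining quadratic part. Write $\xi_d=a+s$ with $|s|\lesssim\delta$. The phase of $E_{\mathbb P^d}g$ is
\[
x'\cdot\xi'+t|\xi'|^2+x_d(a+s)+t(a+s)^2 .
\]
Substituting $y_d:=x_d+2ta$ (with $y'=x'$ and $t$ unchanged) turns it into
\[
x'\cdot\xi'+t|\xi'|^2+y_d(a+s)-ta^2+ts^2 .
\]
The factor $e^{2\pi i(y_d a-ta^2)}$ is unimodular and independent of $\xi$, so it drops out of all $L^4$ norms. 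The shear has Jacobian $1$. Since $|a|\lesssim 1$ (the supports live in $|\xi|\lesssim1$), it maps $B_{\delta^{-2}}^{d+1}$ into a ball of comparable radius. The weights $\omega_{B_{\delta^{-2}}^{d+1}}$ and their sheared versions are therefore comparable up to constants, which can be absorbed using \eqref{weight}.

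The only genuinely $d$-dimensional obstruction is the term $ts^2$. On the essential support $|t|\lesssim\delta^{-2}$ it is $O(1)$, but it couples $t$ and $s$. I would set $\tau=\delta^2 t$ and $\sigma=s/\delta\in[-C,C]$, and expand $e^{2\pi i\tau\sigma^2}$ in a Fourier series in $\sigma$ on a slightly larger interval (after multiplying by a smooth cutoff equal to $1$ on $[-C,C]$):
\[
e^{2\pi i\tau\sigma^2}=\sum_{m\in\Z}c_m(\tau)e^{i\pi m\sigma/C}.
\]
For $|\tau|\lesssim1$ the coefficients satisfy $|c_m(\tau)|\lesssim_K(1+|m|)^{-K}$. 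For large $|\tau|$ they grow only polynomially in $\tau$, and this is absorbed by the rapid decay of $\omega$. Each factor $e^{i\pi m\sigma/C}=e^{i\pi m s/(C\delta)}$ is merely a translation in $y_d$ by $O(|m|\delta^{-1})$. Such a translation is harmless, since it moves the $\delta^{-2}$-ball by much less than its radius and only changes the weight by a factor polynomial in $m$. Summing over $m$ with the triangle inequality then reduces matters to the model case where the $ts^2$ term is absent, and the same expansion applies to each $g_\theta$, so both sides are treated in parallel.

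In the model case, for each fixed $y_d$ the function $(x',t)\mapsto E_{\mathbb P^d}g$ equals $E_{\mathbb P^{d-1}}H_{y_d}$, where
\[
H_{y_d}(\xi')=\int g(\xi',a+s)e^{2\pi i y_d s}\,ds
\]
is supported in $\Omega'$. Moreover, for a cap $\theta=\theta'\times[a-C\delta,a+C\delta]$, the piece $E g_\theta$ at fixed $y_d$ is exactly $E_{\mathbb P^{d-1}}(H_{y_d})_{\theta'}$. Since $g$ lives in a $\delta$-slab, $\delta$-caps $\theta$ in $\R^d$ correspond one-to-one (up to finite overlap) to $\delta$-caps $\theta'$ in $\R^{d-1}$. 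I would apply the hypothesis on the $d$-dimensional weighted ball for each fixed $y_d$, raise to the fourth power, and integrate in $y_d$ against the remaining one-dimensional weight, using the factorization of $\omega$ up to constants into a product weight. Minkowski's inequality in $L^2_{y_d}$ then gives
\[
\int\Bigl(\sum_\theta\|\cdot\|_{L^4_{x',t}}^2\Bigr)^2dy_d\le\Bigl(\sum_\theta\Bigl(\int\|\cdot\|_{L^4_{x',t}}^4dy_d\Bigr)^{1/2}\Bigr)^2,
\]
which is the claimed estimate with the same constant $A$ up to absolute factors independent of $\delta$, $a$ and $\Omega'$.

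I expect the main obstacle to be the bookkeeping for the $ts^2$ term together with the weights. One has to check that the Fourier coefficients $c_m(\tau)$ and the $y_d$-translations produce only polynomial losses in $m$ and $|\tau|$, and that these losses are dominated by the Schwartz decay of $\omega$ and by the rapid decay of $c_m$. Everything else (the shear, the slicing, and Minkowski) is routine.
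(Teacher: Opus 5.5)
Your proposal is correct and follows the same skeleton as the paper's brief proof: a Galilean shear to reduce to $a=0$, then Fubini in the $x_d$-variable, applying the $(d-1)$-dimensional hypothesis on each slice and recombining the slices by Minkowski. The only difference is how the residual $ts^2$ term is handled. The paper absorbs it by noting that the space-time Fourier support lies in the $\delta^2$-neighborhood of the parabolic cylinder and appealing to the uncertainty principle. You remove it explicitly by expanding $e^{2\pi i\tau\sigma^2}$ into rapidly decaying coefficients times $O(|m|\delta^{-1})$-translates in $y_d$, which lets you use the hypothesis in exactly its stated extension form, at the cost of routine weight bookkeeping.
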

\begin{proof}
	Applying Galilean transformation, we can assume that $a=0$. Then we see the Fourier support of $Eg$ is contained in 
	\begin{equation*}
		\left\lbrace (\xi',\xi_d,t)\in \R^d\times \R: \xi'\in \Omega', |\xi_d|\lesssim \delta, ||\xi'|^2-t|\lesssim \delta^2 \right\rbrace,
	\end{equation*} 
	which is exactly the $\delta^2$-neighborhood of a parabolic cylinder. Then by Fubini and uncertainty principle, one can directly obtain the linear lifting lemma.
	
\end{proof}

We shall also use the following localized form of the shell-type linear decoupling estimate. It follows from \cite[Corollary 3.3]{kinoshita2023decoupling}; see also Theorem \ref{variant}.
\begin{lemma}\label{localized-shell}
Let $m\ge3$, $1\le Q\le\delta^{-1}$, and suppose that $g:\R^m\to\C$ is supported in a radial $\delta$-shell and in a ball of radius $O(Q\delta)$, i.e.,
\begin{equation*}
	\supp \;g\subseteq \lbrace \xi\in \R^m: 1-\delta\le|\xi|\le 1+\delta\rbrace\cap B_{Q\delta},
\end{equation*}
where $B_{Q\delta}$ is a ball of radius $Q\delta$ with arbitrary center.

Then we have 
\begin{equation}\label{localized-shell-estimate}
\|Eg\|_{L^4(\omega_{B_{\delta^{-2}}^{m+1}})}
\lesssim_\epsilon Q^{\frac{m-3}{4}+\epsilon}D_\delta(g),
\end{equation}
where
\begin{equation*}
D_\delta(g):=\left(\sum_{|\theta|\sim\delta}\|Eg_\theta\|_{L^4(\omega_{B_{\delta^{-2}}^{m+1}})}^2\right)^{1/2}.
\end{equation*}
In particular, if the frequency dimension is $m=d-1$, the exponent in \eqref{localized-shell-estimate} is $(d-4)/4$.
\end{lemma}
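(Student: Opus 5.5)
The plan is to obtain Lemma \ref{localized-shell} from the global shell-type decoupling theorem (\ref{linear}) by a parabolic rescaling that blows the localized piece up to scale one. We may assume $Q\ge 1$ is large; otherwise the estimate is trivial, since only $O(1)$ caps $\theta$ are involved and Cauchy--Schwarz suffices.

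\textbf{Step 1: rotate and rescale.} After a rotation, center the ball $B_{Q\delta}$ near the point $e_m$ of the unit sphere. Since $Q\delta\lesssim 1$, the shell piece $\{1-\delta\le|\xi|\le 1+\delta\}\cap B_{Q\delta}$ is essentially the $\delta$-neighbourhood of a cap of the sphere $S^{m-1}$ of radius $\sigma:=Q\delta$. Write $\xi=(\eta,\xi_m)$ with $\eta\in\R^{m-1}$. Apply the Galilean transformation recentering the cap at the origin, and then the parabolic rescaling $\eta\mapsto\eta/\sigma$, $\xi_m-1\mapsto(\xi_m-1)/\sigma$, $t\mapsto t/\sigma^2$. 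This maps $E g$ on $B_{\delta^{-2}}^{m+1}$ to $E\tilde g$ on a region of size $\delta^{-2}\sigma$ in space and $\delta^{-2}\sigma^2$ in time.

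\textbf{Step 2: identify the rescaled support.} The sphere near $e_m$ is the graph $\xi_m=\sqrt{1-|\eta|^2}=1-|\eta|^2/2+O(|\eta|^4)$. After rescaling, the support becomes a $\delta/\sigma=Q^{-1}$ neighbourhood of a curved hypersurface of unit size. Its curvature is inherited from the sphere: in the rescaled $\eta$-directions it is curved like a sphere of radius $\sim 1$. It is not a radial shell exactly, but a neighbourhood of a piece of a scaled sphere in the $(\eta,\xi_m)$ coordinates. The caps $\theta$ of size $\delta$ become caps of size $Q^{-1}$.

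\textbf{Step 3: apply the linear theorem.} Apply (\ref{linear}), or rather its variant (Theorem \ref{variant}, which allows general shells of this curvature type), with $R=Q^2$ in dimension $m$. At $p=4\ge \frac{2(m+1)}{m-1}$ (valid for $m\ge3$) this gives the constant
\[
C_m(Q^2)=Q^{\frac{m-1}{2}-\frac{m+1}{4}}=Q^{\frac{m-3}{4}},
\]
up to $Q^\epsilon$, on balls of radius $Q^2$.

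\textbf{Step 4: return to the original scale.} Cover the rescaled domain by $Q^2$-balls, apply Step 3 on each ball, and sum the $L^4$ bounds using Minkowski together with the weight property (\ref{weight}). Undoing the rescaling then gives (\ref{localized-shell-estimate}). For $m=d-1$ the exponent is $\frac{d-4}{4}$.

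\textbf{Main obstacle.} The hard part is Step 2: checking that the rescaled shell piece really satisfies the hypotheses of the shell decoupling theorem. The shell direction $\xi_m$ and the paraboloid direction $t$ interact. The region $\{|\xi|\approx1\}$ is intersected with the paraboloid $t=|\xi|^2$, so $t$ is essentially fixed up to $O(\delta)$. After rescaling, the problem becomes decoupling for the $Q^{-1}$-neighbourhood of an $(m-1)$-dimensional sphere-like surface inside a hyperplane $t\approx$ const. The first thing to try is to apply, via Fubini in $t$, the decoupling of a strongly curved $(m-1)$-manifold with nonvanishing Gaussian curvature. This is the source of the count $\frac{m-1}{4}-\frac{m+1}{2\cdot4}$. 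It amounts to reproving the mechanism of \cite[Corollary 3.3]{kinoshita2023decoupling} at scale $Q^{-1}$, and I would cite that corollary for this step.
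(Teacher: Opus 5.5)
Your Steps 1--3 do not work as stated. A parabolic rescaling must be isotropic in $\xi$ to preserve the form of $E$, and yours is. Writing $\xi=e_m+\sigma\zeta$ with $\sigma=Q\delta$, you get $|\xi|=\sigma|\zeta+\sigma^{-1}e_m|$. The support therefore becomes the $Q^{-1}$-neighbourhood of a sphere of radius $(Q\delta)^{-1}$, centred at $-\sigma^{-1}e_m$, intersected with $B(0,1)$. At unit scale its curvature is $Q\delta$, not $\sim1$, and it is no longer centred at the vertex of the rescaled paraboloid.

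Consequently neither (\ref{linear}) nor Theorem~\ref{variant} can be applied with $R=Q^2$, because both require a radial shell centred at the vertex. Restoring that (undoing the Galilean shift and normalising the radius to $1$) returns you exactly to the original configuration with $R=\delta^{-2}$, so the argument is circular.

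Your rescaling can be rescued when $Q\lesssim\delta^{-1/2}$. There the support lies in the flat slab $|\zeta_m|\lesssim Q^{-1}$, and Lemma~\ref{lifting} plus Bourgain--Demeter in dimension $m-1$ at scale $Q^2$ gives $Q^{\frac{m-3}{4}+\epsilon}$. But the lemma is needed for $Q$ up to $\delta^{-1}$, e.g.\ $Q=N_2$ in the equatorial case with $N_1\le N_2^2$, where the rescaled shell is genuinely curved.

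The mechanism in your ``Main obstacle'' paragraph is also wrong. Slicing at fixed height leaves a $\delta$-neighbourhood of $S^{m-1}\subset\mathbb R^m$, whose canonical decoupling caps have size $\delta^{1/2}$, not $\delta$. What actually drives the estimate is that over a $\delta$-shell the paraboloid $\{(\xi,|\xi|^2)\}$ lies within $\delta^2$ of the cone $\{(\xi,2|\xi|-1)\}$, since $|\xi|^2-(2|\xi|-1)=(|\xi|-1)^2\le\delta^2$. So this is cone decoupling into angular $\delta$-sectors. A $Q\delta$-sector is normalised by Lorentz (not parabolic) rescaling, which gives the full-cone constant at thickness $Q^{-2}$, namely $Q^{\frac{m-1}{2}-\frac{m+1}{p}}$.

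None of the rescaling is needed. The paper's proof is just the citation of \cite[Corollary 3.3]{kinoshita2023decoupling}, i.e.\ Theorem~\ref{variant} applied directly in the original coordinates with $d=m$, $R=\delta^{-2}$ and $p=4$. Then the localizing ball has radius $QR^{-1/2}=Q\delta$ and $1\le Q\le R^{1/2}$. Since $4\ge\frac{2(m+1)}{m-1}$ exactly when $m\ge3$, this gives $C_m'(Q)=Q^{\frac{m-1}{2}-\frac{m+1}{4}}=Q^{\frac{m-3}{4}}$, which is precisely the arithmetic of your Step~3. Drop Steps 1, 2 and 4 and apply the cited theorem at the original scale; the weighted form follows by the standard localization (cf.\ (\ref{weight})).
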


Finally, we shall repeatedly use the elementary interpolation inequality
\begin{equation}\label{L14-sum}
\left\|\sum_{j=1}^{M}G_j\right\|_{L^4}
\lesssim M^{1/4}\left(\sum_{j=1}^{M}\|G_j\|_{L^4}^2\right)^{1/2},
\end{equation}
whenever the Fourier supports of the $G_j$ have bounded overlap. The same estimate holds with the weights $\omega_{B_{\delta^{-2}}}$ used below. We also use the following standard consequence of $L^2$-orthogonality: if $g$ has spatial Fourier support of diameter $O(r)$, where $r\gtrsim \delta^{2}$, and $f=\sum_qf_q$, where the $f_q$ are supported in finitely overlapping $r$-balls, then
\begin{equation}\label{product-orthogonality}
\|Ef\,Eg\|_{L^2(\omega_{B_{\delta^{-2}}})}^2
\lesssim\sum_q\|Ef_q\,Eg\|_{L^2(\widetilde\omega_{B_{\delta^{-2}}})}^2.
\end{equation}
Indeed, the Minkowski sums $\supp f_q+\supp g$ have bounded overlap; the Fourier enlargement caused by the physical cutoff is $O(\delta^2)$ and is harmless.

\subsection{$\boldsymbol{N_1 \ge N_2^2}$ \& $\boldsymbol{\tau_2}$ is polar}

\begin{figure}
\centering
\includegraphics[width=0.7\linewidth]{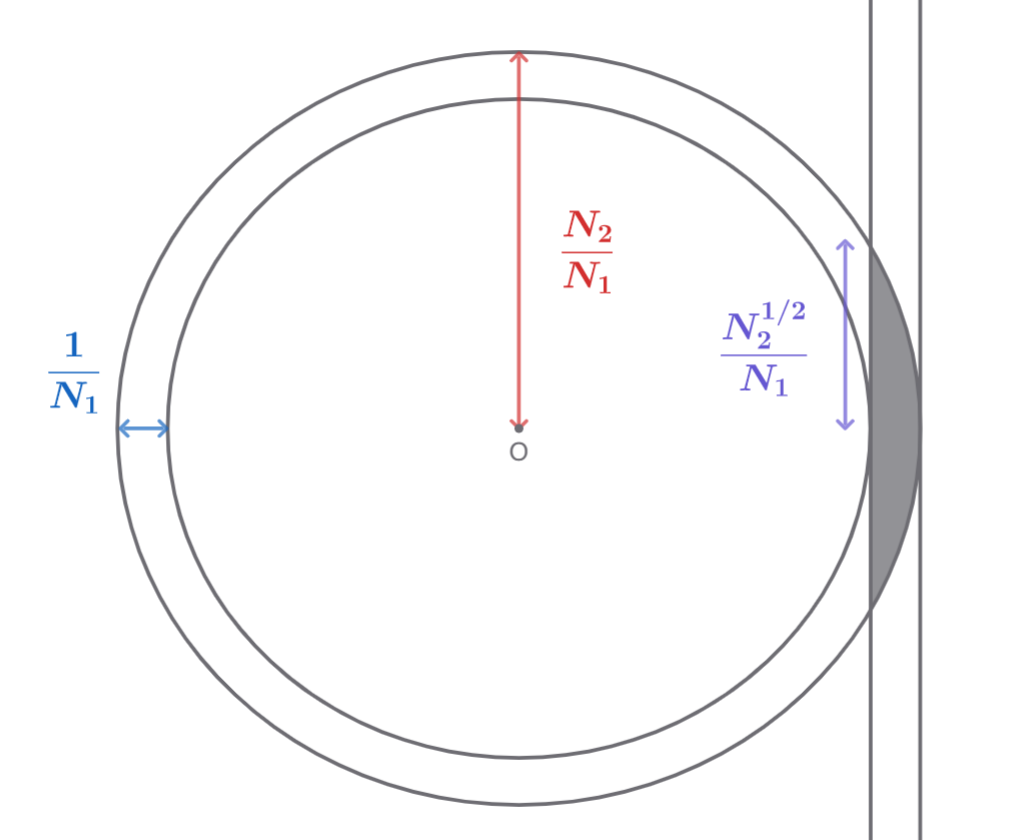}
\caption{Polar $\left(\frac{N_2}{N_1}, \frac{1}{N_1}\right)$-plate \& $N_1 \ge N_2^2$}
\label{P1}
\end{figure}
Since $v^2\le\delta$, the relevant plates have dimensions $(v,\delta)$. From the shell condition and the polar position of $\tau_2$ (see Figure \ref{P1}),
\begin{equation*}
\supp f_{2,\tau_2}
\subseteq
\left\{\xi=(\xi',\xi_d):|\xi'|\lesssim (v\delta)^{1/2},\ |\xi_d-v|\lesssim\delta\right\}.
\end{equation*}
Set $K:=N_2^{1/2}$, so $(v\delta)^{1/2}=K\delta$. By Lemma \ref{lifting} and the Bourgain--Demeter decoupling estimate (\ref{BD}) in $d-1$ dimension,
\begin{equation}\label{polar-low-linear}
\|Ef_{2,\tau_2}\|_{L^4(\omega_{B_{\delta^{-2}}})}
\lesssim_\epsilon K^{\frac{d-3}{4}+\epsilon}D_\delta(f_{2,\tau_2}).
\end{equation}

We next decompose the tangential support of $f_{1,\tau_1}$ into balls $q$ of radius $K\delta$ and write $f_{1,\tau_1}=\sum_qf_{1,q}$. Since $f_1$ is contained in the unit shell and in a $v$-cap around $e_d$, each $f_{1,q}$ is contained in a full $O(K\delta)$-ball. Hence Lemma \ref{localized-shell} gives
\begin{equation}\label{polar-high-linear}
\|Ef_{1,q}\|_{L^4(\omega_{B_{\delta^{-2}}})}
\lesssim_\epsilon K^{\frac{d-3}{4}+\epsilon}D_\delta(f_{1,q}).
\end{equation}
Moreover, $\supp f_{2,\tau_2}$ has diameter $O(K\delta)$, so \eqref{product-orthogonality}, H\"older's inequality, \eqref{polar-low-linear}, and \eqref{polar-high-linear} imply
\begin{equation*}
\|Ef_{1,\tau_1}Ef_{2,\tau_2}\|_{L^2(\omega_{B_{\delta^{-2}}})}
\lesssim_\epsilon K^{\frac{d-3}{2}+\epsilon}D_\delta(f_{1,\tau_1})D_\delta(f_{2,\tau_2}).
\end{equation*}
Since $K=N_2^{1/2}$ and $d\ge4$,
\begin{equation*}
K^{\frac{d-3}{2}}=N_2^{\frac{d-3}{4}}\le N_2^{\frac{2d-7}{4}}.
\end{equation*}
This proves \eqref{case1 estimate} in the polar case.

\subsection{$\boldsymbol{N_1 \ge N_2^2}$ \& $\boldsymbol{\tau_2}$ is equatorial}

\begin{figure}
\centering
\includegraphics[width=0.7\linewidth]{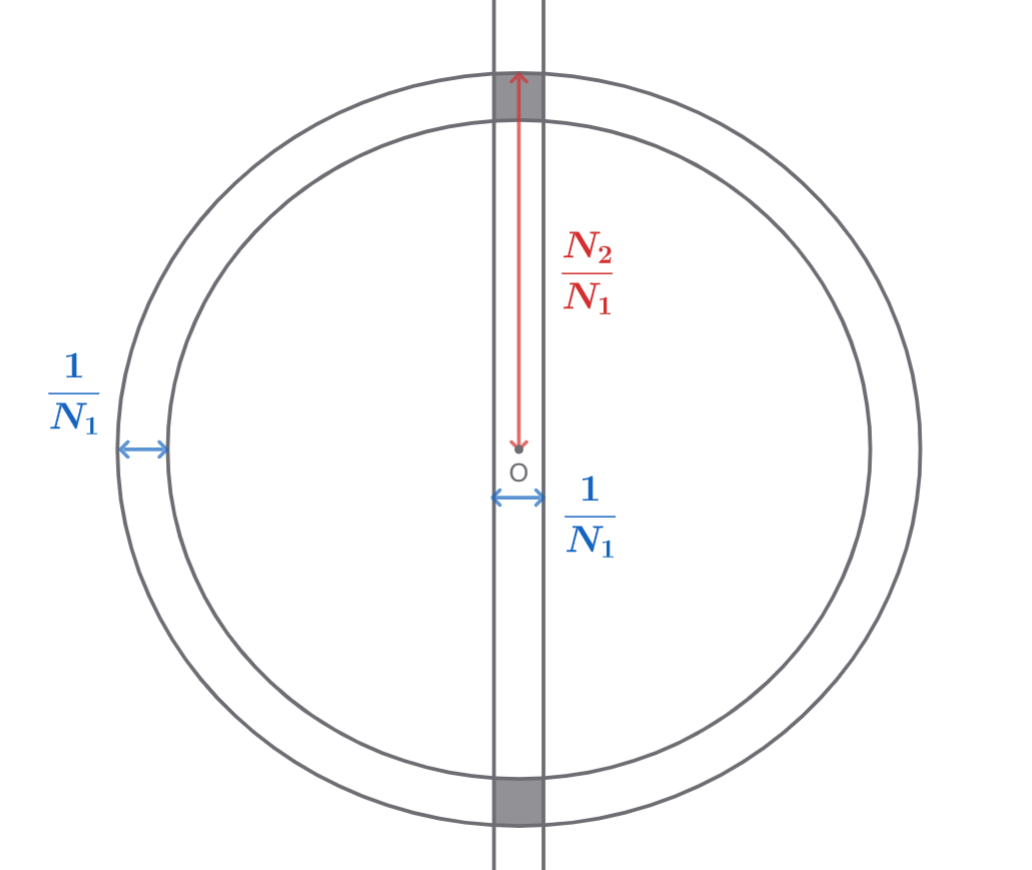}
\caption{Equatorial $\left(\frac{N_2}{N_1}, \frac{1}{N_1}\right)$-plate \& $N_1 \ge N_2^2$}
\label{E1}
\end{figure}
In this case (see Figure \ref{E1}),
\begin{equation*}
\supp f_{2,\tau_2}
\subseteq
\left\{\xi=(\xi',\xi_d):\big||\xi'|-v\big|\lesssim\delta,\ |\xi_d|\lesssim\delta\right\}.
\end{equation*}
By Lemma \ref{lifting}, linear shell-type decoupling (\ref{linear}) and rescaling,
\begin{equation}\label{equatorial-low-linear}
\|Ef_{2,\tau_2}\|_{L^4(\omega_{B_{\delta^{-2}}})}
\lesssim_\epsilon N_2^{\frac{d-4}{4}+\epsilon}D_\delta(f_{2,\tau_2}).
\end{equation}
On the other hand, $f_{1,\tau_1}$ is supported in the unit shell and in a ball of radius $O(v)=O(N_2\delta)$. Hence
\begin{equation}\label{equatorial-high-linear}
\|Ef_{1,\tau_1}\|_{L^4(\omega_{B_{\delta^{-2}}})}
\lesssim_\epsilon N_2^{\frac{d-3}{4}+\epsilon}D_\delta(f_{1,\tau_1}).
\end{equation}
H\"older's inequality gives
\begin{equation*}
\|Ef_{1,\tau_1}Ef_{2,\tau_2}\|_{L^2(\omega_{B_{\delta^{-2}}})}
\lesssim_\epsilon
N_2^{\frac{2d-7}{4}+\epsilon}
D_\delta(f_{1,\tau_1})D_\delta(f_{2,\tau_2}),
\end{equation*}
which is exactly \eqref{case1 estimate}.

\subsection{$\boldsymbol{N_1 \le N_2^2}$ \& $\boldsymbol{\tau_2}$ is polar}

Now $\delta\le v^2$, and the lossless cap--plate decomposition gives $(v,v^2)$-plates. 
 
For $K$ dyadic, we decompose $f_{2,\tau_2}$ according to their projection size:
\begin{equation*}
f_{2,\tau_2}=\sum_{N_2^{1/2}\le K\le N_2} f_{2,K},
\end{equation*}
where $f_{2, N_2^{1/2}}$ corresponds to $|\xi'|\lesssim N_2^{1/2}\delta$, while the remaining $f_{2,K}$ satisfy
\begin{equation*}
 \supp \;f_{2,K}\subseteq\lbrace|\xi'|\sim K\delta\rbrace.
\end{equation*}

For each $K$, we further decompose the short direction of $\tau_2$ into intervals $\lbrace I_j\rbrace$ of length $\delta$. More precisely, 
\begin{equation*}
f_{2,K}=\sum_{j=1}^{H}f_{2,K,j}, \quad H:=\frac{v^2}{\delta}=\frac{N_2^2}{N_1}\ge 1,
\end{equation*}
where $\lbrace a_j\rbrace$ denotes the centers of $\lbrace I_j\rbrace$,
\begin{equation*}
	\supp\; f_{2,K,j} \subseteq \lbrace|\xi_d-a_j|\le \delta\rbrace
\end{equation*}

For the case $K=N_2^{1/2}$, the argument of the first subsection gives
\begin{equation*}
\|Ef_{2,K,j}\|_4\lesssim_\epsilon K^{\frac{d-3}{4}+\epsilon}D_\delta(f_{2,K,j}).
\end{equation*}
For $K>N_2^{1/2}$, set
\begin{equation*}
L:=\frac{N_2}{K}.
\end{equation*}
Then $1\le L\le K$ and $KL=N_2$. The shell condition and $|\xi_d-a_j|\lesssim\delta$ imply
\begin{equation*}
\left||\xi'|^2-(v^2-a_j^2)\right|\lesssim v\delta.
\end{equation*}
Since $|\xi'|\sim K\delta$, the projected radial thickness is $O(L\delta)$. Decomposing it into $O(L)$ radial $\delta$-shells, applying Lemma \ref{localized-shell} in $d-1$ dimension and then Lemma \ref{lifting}, we obtain
\begin{equation}\label{general-slice-linear}
\|Ef_{2,K,j}\|_4
\lesssim_\epsilon
L^{1/4}K^{\frac{d-4}{4}+\epsilon}D_\delta(f_{2,K,j}).
\end{equation}
Using \eqref{L14-sum} in the $\xi_d$-slices yields
\begin{equation}\label{f2K-linear}
\|Ef_{2,K}\|_4
\lesssim_\epsilon
H^{1/4}L^{1/4}K^{\frac{d-4}{4}+\epsilon}D_\delta(f_{2,K}).
\end{equation}

For each fixed $K$, decompose the support of $f_{1,\tau_1}$ into $K\delta$-balls $q$. As above,
\begin{equation*}
\|Ef_{1,q}\|_4\lesssim_\epsilon K^{\frac{d-3}{4}+\epsilon}D_\delta(f_{1,q}),
\end{equation*}
and $\supp f_{2,K}$ has diameter $O(K\delta)$. Therefore \eqref{product-orthogonality} and H\"older give, for $K>N_2^{1/2}$,
\begin{equation*}
\|Ef_{1,\tau_1}Ef_{2,K}\|_2
\lesssim_\epsilon
H^{1/4}L^{1/4}K^{\frac{2d-7}{4}+\epsilon}
D_\delta(f_{1,\tau_1})D_\delta(f_{2,K}).
\end{equation*}
Since $KL=N_2$,
\begin{equation*}
L^{1/4}K^{\frac{2d-7}{4}}
=N_2^{1/4}K^{\frac{d-4}{2}}
\le N_2^{\frac{2d-7}{4}}.
\end{equation*}

 Summing over the $O(\log N_2)$ values of $K$, and absorbing the logarithm into $N_2^\epsilon$, we obtain
\begin{equation*}
\|Ef_{1,\tau_1}Ef_{2,\tau_2}\|_2
\lesssim_\epsilon
H^{1/4}N_2^{\frac{2d-7}{4}+\epsilon}
D_\delta(f_{1,\tau_1})D_\delta(f_{2,\tau_2}).
\end{equation*}
Since
\begin{equation*}
H^{1/4}N_2^{\frac{2d-7}{4}}
=N_2^{\frac{2d-5}{4}}N_1^{-1/4},
\end{equation*}
this proves \eqref{case2 estimate} in the polar case.

\begin{figure}
\centering
\includegraphics[width=0.7\linewidth]{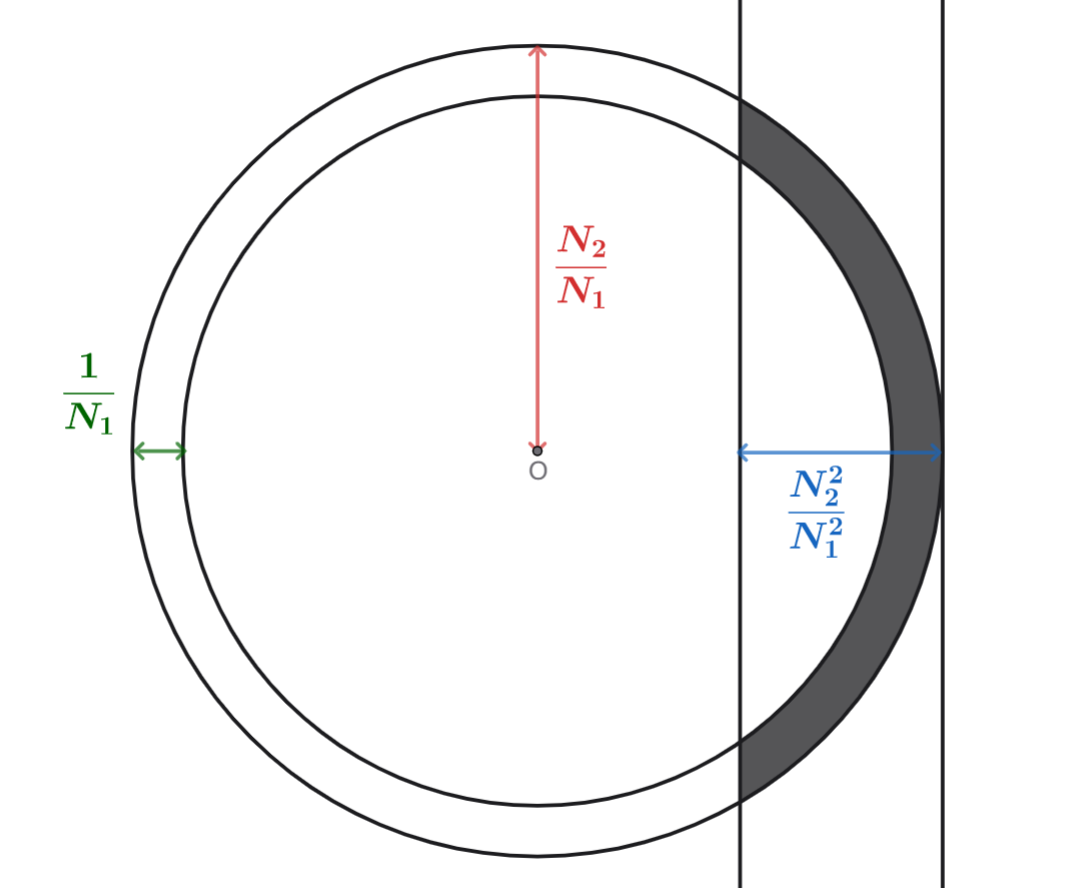}
\caption{Polar $\left(\frac{N_2}{N_1}, \frac{N_2^2}{N_1^2}\right)$-plate \& $N_1 \le N_2^2$}
\label{P2}
\end{figure}

\subsection{$\boldsymbol{N_1 \le N_2^2}$ \& $\boldsymbol{\tau_2}$ is equatorial}

For an equatorial $(v,v^2)$-plate, decompose only its short direction as before:
\begin{equation*}
f_{2,\tau_2}=\sum_{j=1}^{H}f_{2,\tau_2,j},\qquad \supp f_{2,\tau_2,j}\subseteq\lbrace|\xi_d-a_j|\le\delta\rbrace.
\end{equation*}
Since $|a_j|\lesssim v^2\ll v$, each projected support is contained in a radial $\delta$-shell of radius comparable to $v$. Hence Lemma \ref{lifting} and Lemma \ref{localized-shell} give
\begin{equation*}
\|Ef_{2,\tau_2,j}\|_4\lesssim_\epsilon N_2^{\frac{d-4}{4}+\epsilon}D_\delta(f_{2,\tau_2,j}).
\end{equation*}
Using \eqref{L14-sum},
\begin{equation*}
\|Ef_{2,\tau_2}\|_4
\lesssim_\epsilon
H^{1/4}N_2^{\frac{d-4}{4}+\epsilon}D_\delta(f_{2,\tau_2}).
\end{equation*}
Together with \eqref{equatorial-high-linear} and H\"older's inequality, this gives
\begin{equation*}
\|Ef_{1,\tau_1}Ef_{2,\tau_2}\|_2
\lesssim_\epsilon
H^{1/4}N_2^{\frac{2d-7}{4}+\epsilon}
D_\delta(f_{1,\tau_1})D_\delta(f_{2,\tau_2}),
\end{equation*}
which is precisely \eqref{case2 estimate}.

\begin{figure}
\centering
\includegraphics[width=0.7\linewidth]{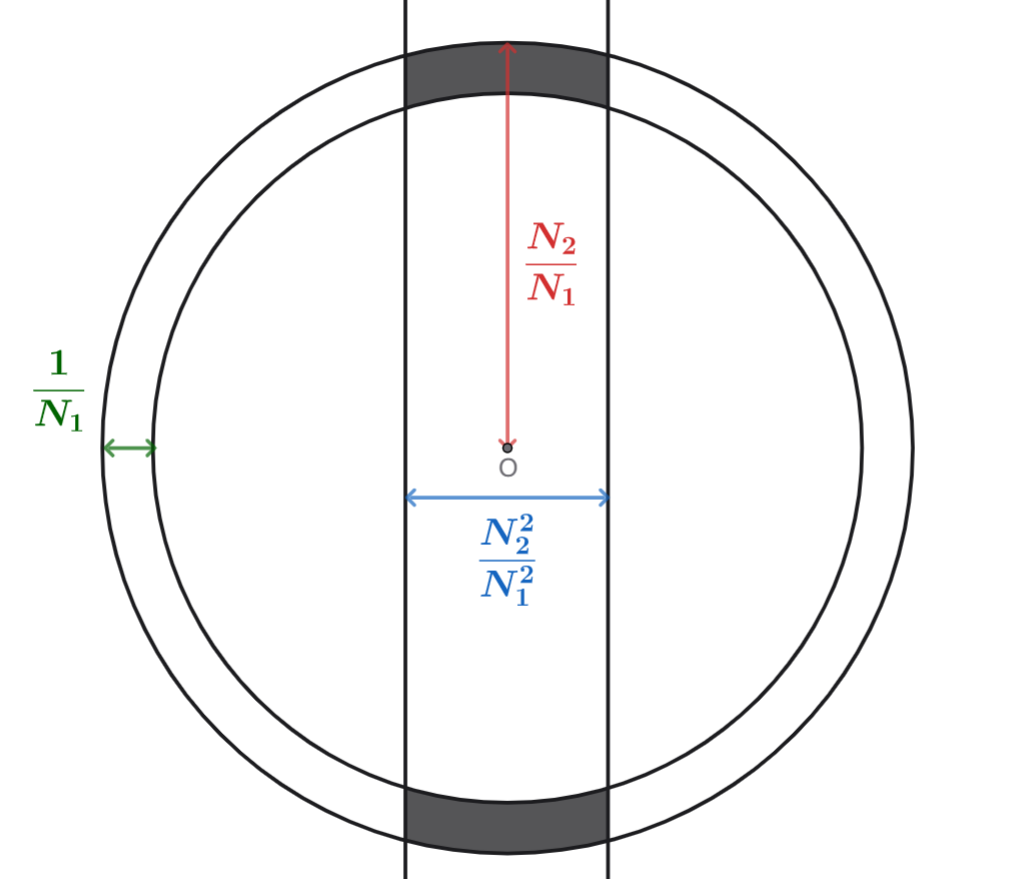}
\caption{Equatorial $\left(\frac{N_2}{N_1}, \frac{N_2^2}{N_1^2}\right)$-plate \& $N_1 \le N_2^2$}
\label{E2}
\end{figure}

\vspace{10pt}
\section{Sharpness of shell-type bilinear decoupling}
In this section, we show that the factor
\begin{equation*}
N_2^{\frac{2d-7}{4}}
\left(1+\frac{N_2^2}{N_1}\right)^{1/4}
\end{equation*}
in \eqref{bilinear estimate} is sharp up to a factor of $N_2^\epsilon$. Recall that
\begin{equation*}
N_2^{\frac{2d-7}{4}}
\left(1+\frac{N_2^2}{N_1}\right)^{1/4}
\sim
\begin{cases}
N_2^{\frac{2d-5}{4}}N_1^{-1/4},&N_1\le N_2^2,\\[2mm]
N_2^{\frac{2d-7}{4}},&N_1\ge N_2^2.
\end{cases}
\end{equation*}
\begin{rem}
	The following two examples were found with the assistance of GPT Pro.
\end{rem}

\begin{thm}\label{prop:sharpness-bilinear-shell}
Let $d\ge4$. Suppose that, for every $N_1\ge N_2\ge1$, one has
\begin{equation}\label{eq:abstract-bilinear-shell}
\|Ef_1Ef_2\|_{L^2(B_{N_1^2})}
\le C(N_1,N_2)
\prod_{j=1}^2
\left(\sum_{|\theta|=1/N_1}\|Ef_{j,\theta}\|_{L^4(B_{N_1^2})}^2\right)^{1/2}
\end{equation}
whenever $f_1$ and $f_2$ satisfy the shell assumptions in Conjecture \ref{c}. Then
\begin{equation*}
C(N_1,N_2)
\gtrsim
N_2^{\frac{2d-7}{4}}
\left(1+\frac{N_2^2}{N_1}\right)^{1/4}.
\end{equation*}
Consequently, the factor in \eqref{bilinear estimate} is sharp up to $N_2^\epsilon$.
\end{thm}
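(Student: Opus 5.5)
The plan is to prove the lower bound by testing \eqref{eq:abstract-bilinear-shell} against two explicit "all caps in phase" examples, one for each regime. The first gives $C\gtrsim N_2^{\frac{2d-7}{4}}$, which is the relevant bound when $N_1\ge N_2^2$. The second gives $C\gtrsim N_2^{\frac{2d-5}{4}}N_1^{-1/4}$, which is the relevant bound when $N_1\le N_2^2$. Taking the maximum of the two yields $N_2^{\frac{2d-7}{4}}(1+N_2^2/N_1)^{1/4}$. The examples are chosen to match the extremal configurations in the proof of Conjecture \ref{8211}: $\tau_2$ equatorial and $\tau_1$ a $v$-cap of the unit shell around $e_d$, with $\delta=1/N_1$ and $v=N_2\delta$.

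\textbf{Example 1 ($N_1\ge N_2^2$).} Let $f_1=\sum_{\theta\in\Theta_1}\varphi_\theta$, where $\varphi_\theta$ are nonnegative $L^1$-normalized smooth bumps adapted to the $\delta$-caps $\theta$ that meet the $\delta$-shell $\{||\xi|-1|\le\delta\}\cap B(e_d,v)$. Thus $\#\Theta_1\sim N_2^{d-1}$. Let $f_2=\sum_{\theta\in\Theta_2}\varphi_\theta$ over the caps meeting the equatorial set $\{||\xi'|-v|\lesssim\delta,\ |\xi_d|\lesssim\delta\}$, so that $\#\Theta_2\sim N_2^{d-2}$.

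\emph{Right-hand side.} Each $|E\varphi_\theta|$ is $\sim1$ on a $\delta^{-1}\times\cdots\times\delta^{-1}\times\delta^{-2}$ tube and rapidly decaying away from it. Hence $\|E\varphi_\theta\|_{L^4(B_{N_1^2})}\sim \delta^{-(d+2)/4}$, and
\[
\mathrm{RHS}\sim C\,(\#\Theta_1\#\Theta_2)^{1/2}\delta^{-(d+2)/2}.
\]

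\emph{Left-hand side.} I will exhibit a set $S\subset B_{N_1^2}$ on which all the phases $x\cdot\xi+t|\xi|^2$ are essentially constant on each support, so that $|Ef_1Ef_2|\gtrsim\#\Theta_1\#\Theta_2$ on $S$. For $f_1$ the support has $|\xi'|\lesssim v$, and by the shell condition $\xi_d=\sqrt{|\xi|^2-|\xi'|^2}$ with $|\xi|^2=1+O(\delta)$. For $f_2$ the support has $\xi_d=O(\delta)$ and $|\xi'|^2=v^2+O(v\delta)$. A natural candidate is
\[
S=\{|x'|\lesssim v^{-1},\ |x_d|\lesssim 1,\ |t|\lesssim \delta^{-1}\},
\]
possibly enlarged along the direction $(x_d,t)$ in which $x_d\xi_d+t|\xi|^2$ is stationary on the high shell. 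The shell conditions are exactly what allow $t$ to range up to $\delta^{-1}$ rather than $v^{-2}$.

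\emph{Main obstacle.} The step I expect to be hardest is identifying the correct coherence set $S$ and its volume, specifically the $t$-extent, which is governed by the shell structure of both supports simultaneously. Once $|S|$ is known, the bound
\[
C\gtrsim (\#\Theta_1\#\Theta_2)^{1/2}|S|^{1/2}\delta^{(d+2)/2}
\]
should reduce to $N_2^{\frac{2d-7}{4}}$. I would first compute it with the candidate $S$ above. If the exponent does not match, I would refine $S$ using a stationary direction in $(x_d,t)$. As a fallback, I would replace the constructive-interference choice by random signs on the caps of $f_2$ only, which makes $|Ef_2|$ behave like $(\#\Theta_2)^{1/2}$ on a larger set.

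\textbf{Example 2 ($N_1\le N_2^2$).} Repeat the construction with $f_2$ supported on a full equatorial $(v,v^2)$-plate intersected with the $N_2/N_1$-shell. This set consists of $H=N_2^2/N_1$ slices $\{|\xi_d-a_j|\le\delta\}$, each of which is a $\delta$-shell of radius $\sim v$ in $\xi'$, so $\#\Theta_2\sim N_2^{d-2}H$. The coherent set is essentially unchanged, because $|a_j|\lesssim v^2$ keeps the phases aligned on $S$, while the count of caps grows by a factor $H$. The same computation then gains the extra factor $H^{1/4}$, mirroring the use of \eqref{L14-sum} in the upper bound, and gives $C\gtrsim N_2^{\frac{2d-7}{4}}H^{1/4}=N_2^{\frac{2d-5}{4}}N_1^{-1/4}$.

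Both constructions are admissible in \eqref{eq:abstract-bilinear-shell} because they satisfy the shell assumptions of Conjecture \ref{c}. Combining the two lower bounds gives the claimed sharpness.
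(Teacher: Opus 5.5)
There is a genuine gap, and it lies in your test functions, not just in the choice of $S$.

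\textbf{Why isotropic bumps fail.} For an isotropic $\delta$-bump at $\xi_\theta$, $|E\varphi_\theta|$ is concentrated on the tube $\{|x+2t\xi_\theta|\lesssim\delta^{-1}\}$. With all caps in phase, your $f_1$-tubes ($\xi_\theta\approx e_d$) and $f_2$-tubes ($|\xi_{\theta'}|\lesssim v$) all pass through the origin. They are transverse, since $|\xi_\theta-\xi_{\theta'}|\sim1$. Hence every product $E\varphi_\theta E\varphi_{\theta'}$ is confined to $|(x,t)|\lesssim\delta^{-1}$. So no enlargement along a stationary direction in $(x_d,t)$ is possible. Random signs do not move the tubes; they only shrink $|Ef_2|$.

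\textbf{What this gives in each example.} In Example~1 the largest coherent set is $\{|x'|\lesssim v^{-1},\ |x_d|\lesssim\delta^{-1},\ |t|\lesssim\delta^{-1}\}$. (Your candidate with $|x_d|\lesssim1$ gives only $N_2^{\frac{d-2}{2}}N_1^{-1}$.) This set yields
\[
C(N_1,N_2)\gtrsim(\#\Theta_1\#\Theta_2)^{1/2}|S|^{1/2}\delta^{\frac{d+2}{2}}\sim N_2^{\frac{d-2}{2}}N_1^{-1/2},
\]
which misses $N_2^{\frac{2d-7}{4}}$ by the factor $(N_1^2/N_2^3)^{1/4}\ge N_2^{1/4}$ when $N_1\ge N_2^2$.

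In Example~2 the $\xi_d$-spread $v^2$ of the plate forces $|x_d|\lesssim v^{-2}\le\delta^{-1}$. So the coherent set is \emph{not} unchanged once it has the right size. The outcome is again $N_2^{\frac{d-2}{2}}N_1^{-1/2}$, short of $N_2^{\frac{2d-5}{4}}N_1^{-1/4}$ by $(N_1/N_2)^{1/4}$. Neither example, as constructed, proves the theorem.

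\textbf{The missing idea.} You need wave packets flattened in the $\xi_d$-direction. Take $L^1$-normalized bumps $\varphi_{\delta,\kappa}$ of size $\delta$ in $\xi'$ and $\kappa\le\delta$ in $\xi_d$, centred \emph{exactly} on $|\xi|=1$, resp.\ $|\xi|=v$. These have the following properties.
\begin{itemize}
\item They are trivially admissible, since each sits inside a single $\delta$-cap on the shell.
\item Exact centring makes the temporal phases $t|\xi|^2$ coincide for all $t$. With centres only within $\delta$ of the unit sphere, $t|\xi|^2$ would drift by $t\delta\gg1$ once $t\sim(v\delta)^{-1}$.
\item $E\varphi_{\delta,\kappa}$ has $x_d$-width $\kappa^{-1}$. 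The price is $\|E\varphi_{\delta,\kappa}\|_{L^4(B_{\delta^{-2}})}\sim\delta^{-\frac{d+1}{4}}\kappa^{-1/4}$, an extra factor $(\delta/\kappa)^{1/4}$ per packet.
\end{itemize}

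\textbf{How the paper uses this.} Taking $\kappa=v\delta$ for $f_1$ lets the $f_1$- and $f_2$-packets overlap for $|t|$ up to $(v\delta)^{-1}$. When $v^2\le\delta$, also take $\kappa=v^2$ for equatorial $f_2$-bumps at $(z,0)$, $|z|=v$; this extends the $x_d$-range to $v^{-2}$. On $\{|x'|\lesssim v^{-1},|x_d|\lesssim v^{-2},|t|\lesssim(v\delta)^{-1}\}$ this gives exactly $N_2^{\frac{2d-7}{4}}$; this is the paper's second example. For $N_1\le N_2^2$ the paper pairs the flattened $f_1$ with isotropic bumps on a polar cap of the small sphere, which are coherent on $|x|\lesssim v^{-1}$, $|t|\lesssim(v\delta)^{-1}$.

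\textbf{Your Example~2 can be repaired.} Keep your equatorial band with isotropic $\delta$-bumps centred on $|\xi|=v$, but flatten $f_1$ to $\kappa=v\delta$. Using $\delta v^{-2}\le1$, it is coherent on the same set $\{|x'|\lesssim v^{-1},|x_d|\lesssim v^{-2},|t|\lesssim(v\delta)^{-1}\}$ and gives $N_2^{\frac{2d-5}{4}}N_1^{-1/4}$. So your frequency configurations are reasonable. What is missing is the anisotropic packet that lengthens the wave packets in $x_d$.
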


\begin{proof}
Set
\begin{equation*}
\delta:=\frac1{N_1},\qquad v:=\frac{N_2}{N_1}=N_2\delta.
\end{equation*}
We first calculate the $L^4$-norm of one anisotropic bump. Let $0<\kappa\le\delta$, let $\varphi\in C_c^\infty(\R^d)$ be nonnegative and $L^1$-normalized, and define
\begin{equation*}
\varphi_{\delta,\kappa}(\xi',\xi_d)
:=\delta^{-(d-1)}\kappa^{-1}
\varphi\left(\frac{\xi'}{\delta},\frac{\xi_d}{\kappa}\right).
\end{equation*}
For any center $\xi_0$, the change of variables
\begin{equation*}
X'=\delta(x'+2t\xi_0'),\qquad X_d=\kappa(x_d+2t\xi_{0,d}),\qquad T=\delta^2t
\end{equation*}
gives
\begin{equation}\label{anisotropic-L4}
\|E[\varphi_{\delta,\kappa}(\cdot-\xi_0)]\|_{L^4(B_{\delta^{-2}})}
\lesssim
\delta^{-\frac{d+1}{4}}\kappa^{-1/4}.
\end{equation}
Indeed, after this change of variables the remaining oscillatory integral is the Fourier transform of
\begin{equation*}
e^{2\pi iT(|u'|^2+(\kappa/\delta)^2u_d^2)}\varphi(u',u_d).
\end{equation*}
Since $|T|\lesssim1$ and $0<\kappa/\delta\le1$, this is a uniformly Schwartz family in the spatial variables. The Jacobian is $\delta^{d+1}\kappa$, which proves \eqref{anisotropic-L4}.

\medskip
\noindent
\textbf{First example (Figure \ref{fig:example-1}): }$N_1\le N_2^2$.
Choose a $C\delta$-separated set
\begin{equation*}
U\subset B_{\R^{d-1}}(0,cv),
\qquad
M:=\#U\sim\left(\frac v\delta\right)^{d-1}=N_2^{d-1},
\end{equation*}
where $c>0$ is sufficiently small. For $u\in U$, set
\begin{equation*}
a_u:=\left(u, \sqrt{1-|u|^2}\right),
\qquad
b_u:=\left(u,\sqrt{v^2-|u|^2}\right).
\end{equation*}
\begin{figure}
	\centering
	\includegraphics[width=0.7\linewidth]{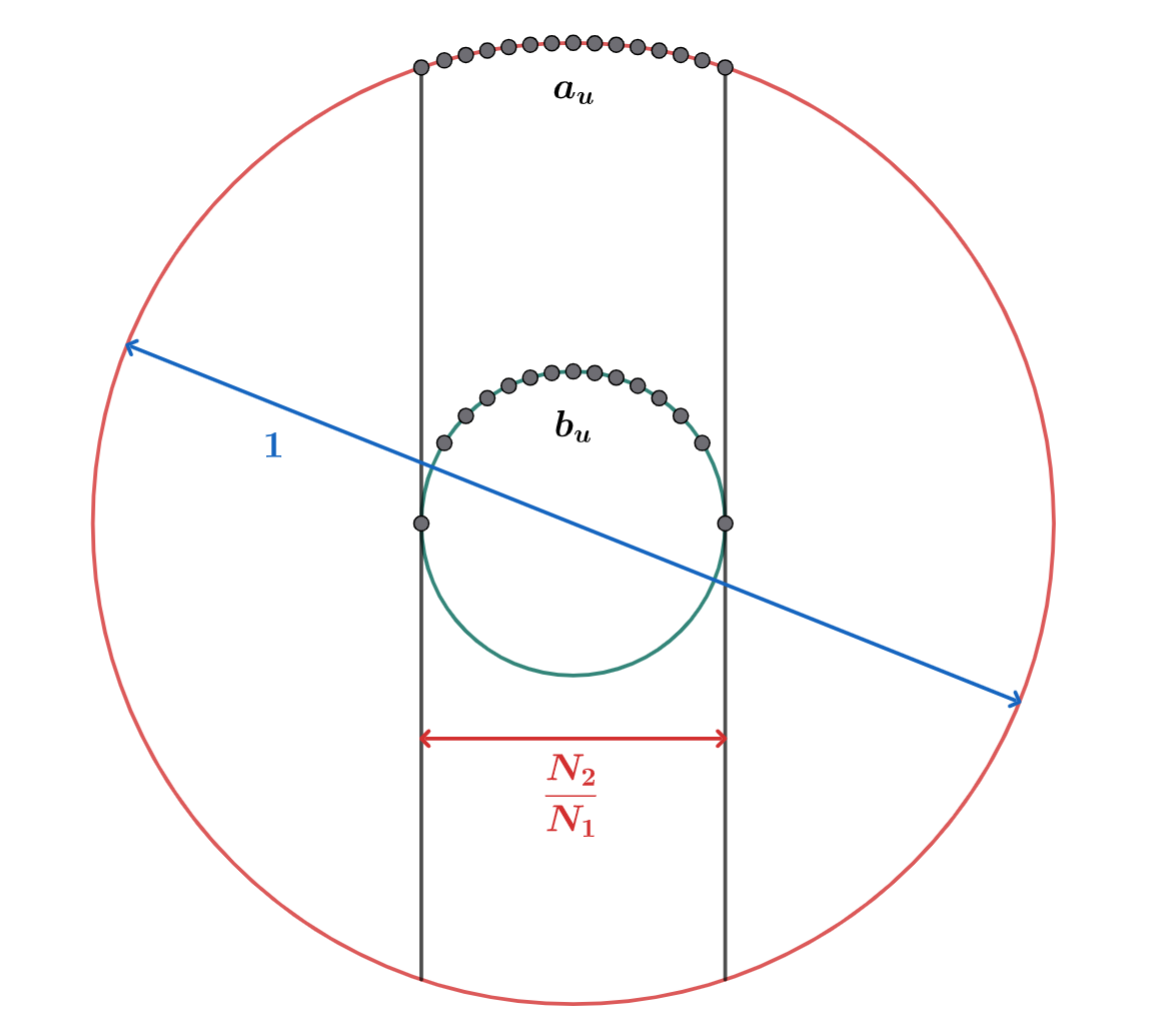}
	\caption{First example}
	\label{fig:example-1}
\end{figure}

Thus $|a_u|=1$ and $|b_u|=v$ exactly. Put $h:=v\delta$ and define
\begin{equation*}
f_1:=\sum_{u\in U}\varphi_{\delta,h}(\cdot-a_u),
\end{equation*}
where the short direction is the $d$-th coordinate, and
\begin{equation*}
f_2:=\sum_{u\in U}\psi_\delta(\cdot-b_u),
\end{equation*}
where $\psi_\delta$ is an isotropic $L^1$-normalized $\delta$-bump. By taking the supports of the fixed bumps sufficiently small, both functions satisfy the shell assumptions in Conjecture \ref{c}.

Consider
\begin{equation*}
\Omega_1:=\left\{(x,t):\ |x'|\le c_0v^{-1},|x_d|\le c_0v^{-1}, \ |t|\le c_0(v\delta)^{-1}\right\}.
\end{equation*}
We claim that
\begin{equation}\label{first-example-coherence}
|Ef_1(x,t)|\gtrsim M,
\qquad
|Ef_2(x,t)|\gtrsim M,
\qquad (x,t)\in\Omega_1.
\end{equation}
To see this, the extension of the atom centered at $a_u$ can be written as
\begin{equation*}
e^{2\pi i(x\cdot a_u+t)}\Phi_u(x,t),
\end{equation*}
where the internal phase of $\Phi_u$ is bounded by
\begin{equation*}
\delta|x'+2tu|+h|x_d+2t\sqrt{1-|u|^2}|+|t|(h^2+\delta^2)\lesssim c_0
\end{equation*}
on $\Omega_1$. Hence $|\Phi_u-1|\ll1$. Moreover, for $u,\widetilde u\in U$,
\begin{equation*}
|x\cdot(a_u-a_{\widetilde u})|
\lesssim |x'||u-\widetilde u|+|x_d|\,\big|\sqrt{1-|u|^2}-\sqrt{1-|u'|^2}\big|
\lesssim c_0.
\end{equation*}
The temporal phases agree exactly because $|a_u|=1$. Thus all atoms of $Ef_1$ lie in a common small sector. The same argument applies to $f_2$: the internal phase is $O(c_0)$, the temporal phase is common because $|b_u|=v$, and
\begin{equation*}
|x\cdot(b_u-b_{\widetilde u})|\lesssim c_0.
\end{equation*}
This proves \eqref{first-example-coherence}.

Since
\begin{equation*}
|\Omega_1|\sim v^{-d}(v\delta)^{-1}=v^{-(d+1)}\delta^{-1},
\end{equation*}
we obtain
\begin{equation}\label{first-example-LHS}
\|Ef_1Ef_2\|_{L^2(B_{\delta^{-2}})}
\gtrsim M^2v^{-\frac{d+1}{2}}\delta^{-1/2}.
\end{equation}
Every $\delta$-cap contains $O(1)$ atoms. Hence \eqref{anisotropic-L4} gives
\begin{equation*}
D_\delta(f_1)
\lesssim M^{1/2}\delta^{-\frac{d+2}{4}}v^{-1/4},
\qquad
D_\delta(f_2)
\lesssim M^{1/2}\delta^{-\frac{d+2}{4}}.
\end{equation*}
Dividing \eqref{first-example-LHS} by these two bounds and using $M\sim(v/\delta)^{d-1}$ yields
\begin{equation}\label{first-sharp-lower}
C(N_1,N_2)
\gtrsim
v^{\frac{2d-5}{4}}\delta^{-\frac{d-3}{2}}
=N_2^{\frac{2d-5}{4}}N_1^{-1/4}.
\end{equation}
This is the desired lower bound when $N_1\le N_2^2$.

\vspace{7pt}
\medskip
\noindent
\textbf{Second example (Figure \ref{fig:example-2}): }$N_1\ge N_2^2$.
Now $v^2\le\delta$. Choose a $C\delta$-separated set
\begin{equation*}
U\subset B_{\R^{d-1}}(0,cv),
\qquad
M_1:=\#U\sim N_2^{d-1},
\end{equation*}
and define
\begin{equation*}
a_u:=\left(u,\sqrt{1-|u|^2}\right).
\end{equation*}
Choose also a $C\delta$-separated set $Z$ in a fixed angular patch of
\begin{equation*}
\{z\in\R^{d-1}:|z|=v\},
\qquad
M_2:=\#Z\sim N_2^{d-2},
\end{equation*}
and put $b_z:=(z,0)$. 
\begin{figure}
	\centering
	\includegraphics[width=0.7\linewidth]{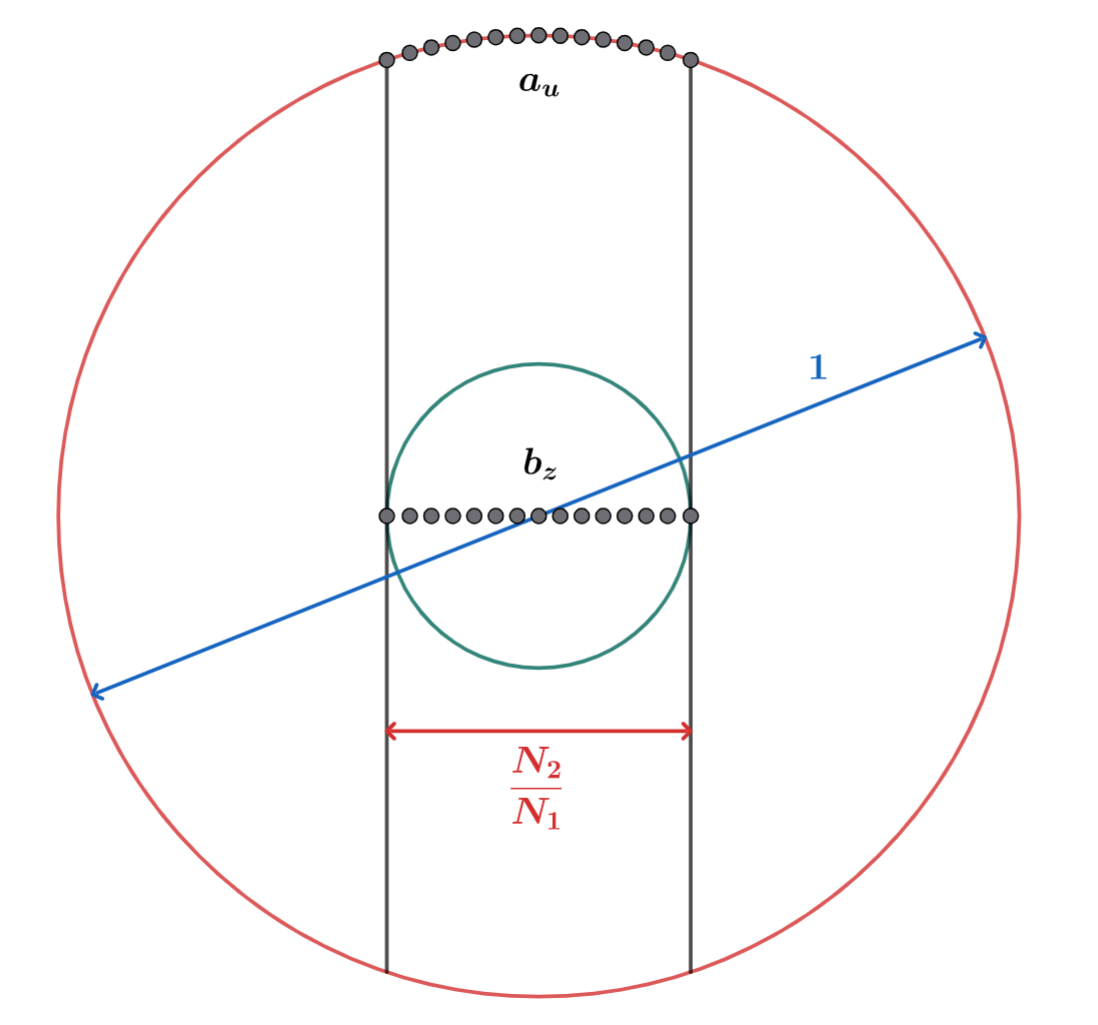}
	\caption{Second example}
	\label{fig:example-2}
\end{figure}

Define
\begin{equation*}
f_1:=\sum_{u\in U}\varphi_{\delta,v\delta}(\cdot-a_u),
\qquad
f_2:=\sum_{z\in Z}\psi_{\delta,v^2}(\cdot-b_z),
\end{equation*}
where the short direction is the $d$-th coordinate. Since $v^2\le\delta$, both functions are admissible. In particular, if $\xi=b_z+(\eta',\eta_d)$ with $|\eta'|\lesssim\delta$ and $|\eta_d|\lesssim v^2$, then
\begin{equation*}
\big||\xi|-v\big|\lesssim\delta+v^3\lesssim\delta.
\end{equation*}

Let
\begin{equation*}
\Omega_2:=\left\{(x,t):|x'|\le c_0v^{-1},\ |x_d|\le c_0v^{-2},\ |t|\le c_0(v\delta)^{-1}\right\}.
\end{equation*}
Since $|a_u|=1$ and $|b_z|=v$, the temporal phases are constant within each family. Moreover,
\begin{equation*}
(a_u)_d=\sqrt{1-|u|^2}=1+O(v^2),
\end{equation*}
while $(b_z)_d=0$. The same internal-phase calculation as before therefore gives
\begin{equation*}
|Ef_1|\gtrsim M_1,
\qquad
|Ef_2|\gtrsim M_2
\end{equation*}
on $\Omega_2$. Since
\begin{equation*}
|\Omega_2|\sim v^{-(d-1)}v^{-2}(v\delta)^{-1}=v^{-(d+2)}\delta^{-1},
\end{equation*}
we obtain
\begin{equation*}
\|Ef_1Ef_2\|_2
\gtrsim M_1M_2v^{-\frac{d+2}{2}}\delta^{-1/2}.
\end{equation*}
On the other hand, \eqref{anisotropic-L4} gives
\begin{equation*}
D_\delta(f_1)
\lesssim M_1^{1/2}\delta^{-\frac{d+2}{4}}v^{-1/4},
\quad
D_\delta(f_2)
\lesssim M_2^{1/2}\delta^{-\frac{d+1}{4}}v^{-1/2}.
\end{equation*}
Since
\begin{equation*}
(M_1M_2)^{1/2}\sim\left(\frac v\delta\right)^{\frac{2d-3}{2}},
\end{equation*}
we conclude that
\begin{equation}\label{second-sharp-lower}
C(N_1,N_2)
\gtrsim
\left(\frac v\delta\right)^{\frac{2d-7}{4}}
=N_2^{\frac{2d-7}{4}}.
\end{equation}
Combining \eqref{first-sharp-lower} and \eqref{second-sharp-lower} proves the theorem.

\end{proof}

\vspace{10pt}
\section{Applications}
	
	We record several applications of our bilinear shell-type decoupling inequality to problems in Fourier analysis and dispersive PDE. The first two applications are closely related to lattice-point problems on spheres and therefore follow rather directly from our main estimate. The final application concerns the nonlinear smoothing effect for the Zakharov system, where a nontrivial reduction is required to reveal the underlying shell-type frequency structure.

	\subsection{Bilinear estimates for eigenfunctions}
	
	The study of $L^p$ bounds for Laplace eigenfunctions on the torus goes
	back to Bourgain \cite{Bourgain1993Torus}. Bilinear eigenfunction
	estimates, which are particularly useful in nonlinear dispersive
	equations, were systematically developed by Burq, G\'erard and
	Tzvetkov \cite{BurqGerardTzvetkov2005}. Very recently, Pezzi proved a sharp eigenfunction bound for large $p$ by refining the circle method \cite{pezzi2026sharp}.
	
	Since toral eigenfunctions are Fourier supported on lattice spheres, the bilinear shell-type estimate immediately gives the following result.
	
	\begin{coro}[Bilinear toral eigenfunction estimate]
		Let $d\ge 4$ and let $\phi_1,\phi_2$ satisfy
		\begin{equation*}
			-\Delta\phi_j=N_j^2\phi_j,
			\qquad
			N_1\ge N_2\ge 1.
		\end{equation*}
		Then, for every $\epsilon>0$,
		\begin{equation*}
			\|\phi_1\phi_2\|_{L^2(\mathbb T^d)}
			\lesssim_\epsilon
			N_2^\epsilon
			N_2^{\frac{2d-7}{4}}\left(1+\frac{N_2^2}{N_1}\right)^{1/4}
			\|\phi_1\|_{L^2(\mathbb T^d)}
			\|\phi_2\|_{L^2(\mathbb T^d)}.
		\end{equation*}
	\end{coro}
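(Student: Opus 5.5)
The plan is to deduce the corollary from Corollary \ref{bilinear Strichartz} by exploiting the fact that eigenfunctions evolve trivially under the Schr\"odinger flow. Since $-\Delta\phi_j=N_j^2\phi_j$, the function $\phi_j$ is a trigonometric polynomial whose Fourier support lies on the lattice sphere $\{k\in\Z^d:|k|=N_j\}$. With the paper's normalization $e^{it\Delta}\phi_j=e^{-4\pi^2 itN_j^2}\phi_j$ (up to the harmless $4\pi^2$ convention, which only rescales $N_j$ by a fixed constant), we have
\begin{equation*}
|e^{it\Delta}\phi_1\,e^{it\Delta}\phi_2|=|\phi_1\phi_2|
\end{equation*}
pointwise in $(t,x)$. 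Integrating over $t\in[0,1]$ then gives
\begin{equation*}
\|e^{it\Delta}\phi_1e^{it\Delta}\phi_2\|_{L^2([0,1]\times\T^d)}=\|\phi_1\phi_2\|_{L^2(\T^d)}.
\end{equation*}

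Next, I check the hypotheses of Corollary \ref{bilinear Strichartz}. The Fourier support of $\phi_j$ lies exactly on $|k|=N_j$, which is contained in the shell $N_j-O_d(1)\le|k|\le N_j+O_d(1)$. When $N_2\gg1$, that corollary directly yields the claimed bound with the factor $N_2^\epsilon N_2^{\frac{2d-7}{4}}(1+N_2^2/N_1)^{1/4}$.

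The only remaining point is the range $1\le N_2\lesssim 1$, where Corollary \ref{bilinear Strichartz} formally requires $N_2\gg1$. In this range I would argue directly. The number of lattice points with $|k|=N_2$ is $O(1)$, so by Cauchy--Schwarz
\begin{equation*}
\|\phi_2\|_{L^\infty}\le\sum_k|\widehat{\phi_2}(k)|\lesssim\|\phi_2\|_{L^2},
\end{equation*}
and therefore $\|\phi_1\phi_2\|_{L^2}\le\|\phi_1\|_{L^2}\|\phi_2\|_{L^\infty}\lesssim\|\phi_1\|_{L^2}\|\phi_2\|_{L^2}$. In this range the claimed coefficient is $\gtrsim1$, so this trivial bound suffices.

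I expect no real obstacle. The only care needed is in this bookkeeping: the small-$N_2$ regime handled above, and the time-normalization constant.
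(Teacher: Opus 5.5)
Your proposal is correct and follows the paper's argument: because $e^{it\Delta}\phi_j$ differs from $\phi_j$ only by a unimodular time phase, the $L^2([0,1]\times\mathbb{T}^d)$ norm of $e^{it\Delta}\phi_1\,e^{it\Delta}\phi_2$ equals $\|\phi_1\phi_2\|_{L^2(\mathbb{T}^d)}$, and Corollary~\ref{bilinear Strichartz} then gives the bound. Your separate trivial treatment of bounded $N_2$ fills in the $N_2\gg1$ hypothesis of that corollary, which the paper's proof leaves implicit.
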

	
	\begin{proof}
		Since $\widehat{\phi_j}$ is supported on the lattice sphere
		$\{k\in \Z^d: |k|=N_j\}$ and 
		\begin{equation*}
			e^{it\Delta}\phi_j=e^{-itN_j^2}\phi_j,
		\end{equation*}
		the bilinear Strichartz estimate (\ref{bilinear Strichartz true}) gives
		\begin{equation*}
			\|\phi_1\phi_2\|_{L^2(\mathbb T^d)}=\left\|
			e^{it\Delta}\phi_1\,
			e^{it\Delta}\phi_2
			\right\|_{L^2_{t,x}([0,1]\times \T^d)}
			\lessapprox 	N_2^{\frac{2d-7}{4}}\left(1+\frac{N_2^2}{N_1}\right)^{1/4}
			\|\phi_1\|_{L^2}
			\|\phi_2\|_{L^2}.
		\end{equation*}
	\end{proof}
	
	\vspace{10pt}
	\subsection{Mixed additive energy on lattice spheres}
	
	Additive properties of lattice points on spheres are closely connected
	with discrete Fourier restriction. Bourgain and Demeter obtained
	important bounds for additive energies on lattice spheres in dimensions
	four and five \cite{BourgainDemeter2015Sphere}, and Mudgal subsequently
	obtained further improvements in dimensions three and four
	\cite{Mudgal2022}. The two-scale nature of our estimate gives a mixed
	additive-energy bound for lattice points lying on two different spheres.
	
	\begin{coro}[Mixed additive energy]
		Let $d\ge 4$, $N_1\ge N_2\ge 1$, and
		\begin{equation*}
			A\subset\{k\in\mathbb Z^d:|k|=N_1\},
			\qquad
			B\subset\{k\in\mathbb Z^d:|k|=N_2\}.
		\end{equation*}
		Define the mixed additive energy by
		\begin{equation*}
			E(A,B)
			=
			\#\big\{
			(a_1,a_2,b_1,b_2)\in A\times A\times B\times B:
			a_1+b_1=a_2+b_2
			\big\}.
		\end{equation*}
		Then we have 
		\begin{equation}\label{x}
			E(A,B)
			\lesssim_{\epsilon}
			N_2^\epsilon
			N_2^{\frac{2d-7}{2}}\left(1+\frac{N_2^2}{N_1}\right)^{1/2}
			|A||B|.
		\end{equation}
	\end{coro}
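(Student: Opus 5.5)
We apply the eigenfunction estimate of the previous corollary to suitably chosen exponential sums and expand the $L^2$ norm. Set
\begin{equation*}
\phi_1(x):=\sum_{a\in A}e^{2\pi i a\cdot x},\qquad \phi_2(x):=\sum_{b\in B}e^{2\pi i b\cdot x}.
\end{equation*}
Then $\widehat{\phi_1}$ is supported on $\{|k|=N_1\}$ and $\widehat{\phi_2}$ on $\{|k|=N_2\}$, so $-\Delta\phi_j=4\pi^2N_j^2\phi_j$. The constant $4\pi^2$ comes from the normalization $\T^d=(\R/\Z)^d$. It is harmless, since only the frequency-support hypotheses of Corollary \ref{bilinear Strichartz} are used, and these hold exactly. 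By Parseval, $\|\phi_1\|_{L^2(\T^d)}^2=|A|$ and $\|\phi_2\|_{L^2(\T^d)}^2=|B|$.

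Next we compute $\|\phi_1\phi_2\|_{L^2}^2$. We have
\begin{equation*}
\phi_1\phi_2(x)=\sum_{k\in\Z^d}r(k)e^{2\pi i k\cdot x},\qquad r(k):=\#\{(a,b)\in A\times B: a+b=k\},
\end{equation*}
so Parseval gives
\begin{equation*}
\|\phi_1\phi_2\|_{L^2(\T^d)}^2=\sum_k r(k)^2=E(A,B).
\end{equation*}
This holds because $r(k)^2$ counts exactly the quadruples $(a_1,b_1,a_2,b_2)$ with $a_1+b_1=a_2+b_2=k$.

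Squaring the bilinear toral eigenfunction estimate then yields
\begin{equation*}
E(A,B)\lesssim_\epsilon N_2^{2\epsilon}N_2^{\frac{2d-7}{2}}\left(1+\frac{N_2^2}{N_1}\right)^{1/2}|A||B|.
\end{equation*}
Renaming $2\epsilon$ as $\epsilon$ gives \eqref{x}.

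The only point needing care is bounded $N_2$. The eigenfunction corollary is stated for $N_2\ge1$, but it is derived from Corollary \ref{bilinear Strichartz}, which assumes $N_2\gg1$. For $N_2\lesssim1$ we argue directly. We have $|B|=O(1)$, and for each fixed $(a_1,b_1,b_2)$ at most one $a_2$ satisfies $a_1+b_1=a_2+b_2$. Hence $E(A,B)\le|A||B|^2\lesssim|A||B|$, which is dominated by the right-hand side of \eqref{x}. This step is routine, so the corollary is essentially immediate from the eigenfunction estimate.
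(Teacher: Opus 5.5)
Your proposal is correct and follows the paper's proof: you form the exponential sums over $A$ and $B$, use Plancherel to get $\|\phi_1\|_{L^2}^2=|A|$, $\|\phi_2\|_{L^2}^2=|B|$ and $\|\phi_1\phi_2\|_{L^2}^2=E(A,B)$, and then square the bilinear toral eigenfunction estimate. Your separate treatment of bounded $N_2$ (where $E(A,B)\le |A||B|^2\lesssim |A||B|$) is a useful addition, since the underlying bilinear Strichartz corollary assumes $N_2\gg1$ and the paper does not address this case.
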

	
	\begin{proof}
		Set
		\begin{equation*}
			\phi_A(x)=\sum_{a\in A}e^{ia\cdot x},
			\qquad
			\phi_B(x)=\sum_{b\in B}e^{ib\cdot x}.
		\end{equation*}
		By Plancherel,
		\begin{equation*}
			\|\phi_A\|_{L^2}^2=|A|,
			\qquad
			\|\phi_B\|_{L^2}^2=|B|,
		\end{equation*}
		and
		\begin{equation*}
			\|\phi_A\phi_B\|_{L^2}^2=E(A,B).
		\end{equation*}
		The conclusion then follows by applying the previous bilinear toral eigenfunction estimate.
	\end{proof}
	
	As an immediate consequence, Cauchy--Schwarz gives
	\begin{equation}\label{xx}
		|A+B|
		\gtrapprox
		N_2^{-\frac{2d-7}{2}}
		\left(1+\frac{N_2^2}{N_1}\right)^{-1/2}
		|A||B|.
	\end{equation}
	Indeed, if we write
	\begin{equation*}
		r_{A+B}(x)
		=
		\#\{(a,b)\in A\times B:a+b=x\},
	\end{equation*}
	then
	\begin{equation*}
		|A|^2|B|^2
		=
		\left(
		\sum_x r_{A+B}(x)
		\right)^2
		\le
		|A+B|
		\sum_x r_{A+B}(x)^2
		=
		|A+B|E(A,B).
	\end{equation*}

	\vspace{10pt}
	\subsection{Separated nonlinear smoothing for the periodic Zakharov system}
	
	Nonlinear smoothing refers to the phenomenon that the nonlinear
	Duhamel part of a dispersive evolution enjoys more spatial regularity
	than the corresponding solution. For the one-dimensional periodic
	Zakharov system, nonlinear smoothing and its applications to higher
	Sobolev norms were established by Erdo\u{g}an and Tzirakis
	\cite{ErdoganTzirakis2013}. The local theory for the Zakharov system
	was developed extensively in earlier work, including
	\cite{GinibreTsutsumiVelo1997}. More recently, Kinoshita, Nakamura and
	Sanwal introduced shell-type decoupling into the periodic Zakharov
	system and obtained sharp local well-posedness
	\cite{kinoshita2023decoupling}. The key point in their argument is
	that the Schr\"odinger--wave interaction naturally localizes a
	Schr\"odinger factor to a thinner radial shell.
	
	We consider the periodic Zakharov system
	\begin{equation*}
		\left\{
		\begin{array}{l}
			i\partial_tu+\Delta u=nu,\\[2mm]
			\partial_t^2n-\Delta n=\Delta (|u|^2),
		\end{array}
		\right.
		\qquad
		(t,x)\in\mathbb R\times\mathbb T^d,
	\end{equation*}
	with initial data
	\begin{equation*}
		u(0)=u_0,\qquad
		n(0)=n_0,\qquad
		\partial_tn(0)=n_1.
	\end{equation*}
	Here $u$ is complex-valued and $n$ is real-valued. We also write $|D|=(-\Delta)^{1/2}$.

	Introducing
	\begin{equation*}
		w_\pm
		=
		n\pm i|D|^{-1}\partial_tn,
	\end{equation*}
	the above Zakharov system can be written as the first-order system
	\begin{equation}\label{za}
		\left\{
		\begin{array}{ll}
			i\partial_tu+\Delta u
			=\frac12(w_++w_-)u,\\[2mm]
			i\partial_tw_\pm\mp |D|w_\pm
			=
			\pm |D||u|^2,
		\end{array}
		\right.
		\qquad
		(t,x)\in\mathbb R\times\mathbb T^d,
	\end{equation}
	with initial data
	\begin{equation*}
		u(0)=u_0,
		\qquad
		w_\pm(0)
		=
		n_0\pm i|D|^{-1}n_1.
	\end{equation*}
	
	We first introduce the Littlewood--Paley and modulation decompositions that will be used throughout this section. Let $\eta:\mathbb{R}\to[0,1]$ be a smooth cutoff such that $\eta(r)=1$ for $|r|\leq 1$ and
	$\operatorname{supp}\eta\subset (-2,2)$. For dyadic numbers $N\in 2^{\mathbb{N}_0}$,
	we set
	\begin{equation*}
		\eta_1(r):=\eta(r),\quad 	\eta_N(r)
		:=
		\eta\left(\frac{r}{N}\right)
		-
		\eta\left(\frac{2r}{N}\right), \quad N\ge 2.
	\end{equation*}
	The corresponding spatial Littlewood--Paley projection is defined by
	\begin{equation*}
		P_Nu
		=
		\mathcal{F}_x^{-1}
		\left[
		\eta_N(|k|)\mathcal{F}_xu(k)
		\right].
	\end{equation*}
	
	We next introduce modulation projections adapted to the Schr\"odinger and
	wave dispersion relations. Writing
	\begin{equation*}
		\widetilde{u}(\tau,k)
		=
		\mathcal{F}_{t,x}u(\tau,k),
	\end{equation*}
	for dyadic $L\in 2^{\mathbb{N}_0}$, we define
	\begin{equation*}
		Q_L^S u
		=
		\mathcal{F}_{t,x}^{-1}
		\left[
		\eta_L(\tau+|k|^2)\widetilde{u}(\tau,k)
		\right],
	\end{equation*}
	and
	\begin{equation*}
		Q_L^{W_\pm}u
		=
		\mathcal{F}_{t,x}^{-1}
		\left[
		\eta_L(\tau\pm |k|)\widetilde{u}(\tau,k)
		\right].
	\end{equation*}
	We further write
	\begin{equation*}
		P_{N,L}^S
		=
		P_NQ_L^S,\quad 
		P_{N,L}^{W_\pm}
		=
		P_NQ_L^{W_\pm}.
	\end{equation*}
	
	The Bourgain spaces associated with the Schr\"odinger and wave flows are
	then defined by
	\begin{equation*}
		X_S^{s,b}
		=
		\left\{
		u\in\mathcal{S}'(\mathbb{R}\times\mathbb{T}^d):
		\|u\|_{X_S^{s,b}}<\infty
		\right\},
	\end{equation*}
	where
	\begin{equation*}
		\|u\|_{X_S^{s,b}}
		=
		\left(
		\sum_{N,L}
		N^{2s}L^{2b}
		\|P_{N,L}^Su\|_{L_{t,x}^2}^2
		\right)^{1/2},
	\end{equation*}
	and
	\begin{equation*}
		X_{W_\pm}^{s,b}
		=
		\left\{
		u\in\mathcal{S}'(\mathbb{R}\times\mathbb{T}^d):
		\|u\|_{X_{W_\pm}^{s,b}}<\infty
		\right\},
	\end{equation*}
	with
	\begin{equation*}
		\|u\|_{X_{W_\pm}^{s,b}}
		=
		\left(
		\sum_{N,L}
		N^{2s}L^{2b}
		\|P_{N,L}^{W_\pm}u\|_{L_{t,x}^2}^2
		\right)^{1/2}.
	\end{equation*}
	
	For a finite time interval $[0,T]$ and
	$X=X_S^{s,b}$ or $X=X_{W_\pm}^{s,b}$, we use the standard restriction
	space
	\begin{equation*}
		X(T)
		=
		\left\{
		u:[0,T]\times\mathbb{T}^d\to\mathbb{C}:
		\|u\|_{X(T)}<\infty
		\right\},
	\end{equation*}
	equipped with the norm
	\begin{equation*}
		\|u\|_{X(T)}
		=
		\inf
		\left\{
		\|U\|_X:
		U\in X,\;
		U(t)=u(t)
		\text{ for }t\in[0,T]
		\right\}.
	\end{equation*}
	
	Finally, when $b>\frac12$, the standard Sobolev embedding in the time
	variable yields
	\begin{equation*}
		\|u\|_{L_t^\infty H_x^s}
		\lesssim
		\|u\|_{X_S^{s,b}},
	\end{equation*}
	and similarly
	\begin{equation*}
		\|u\|_{L_t^\infty H_x^s}
		\lesssim
		\|u\|_{X_{W_\pm}^{s,b}}.
	\end{equation*}
	Consequently, the corresponding time-restricted spaces are continuously
	embedded into $C([0,T];H^s(\mathbb{T}^d))$.
	
	On a time interval $I$ with $|I|\le1$, we define
	\begin{equation*}
		\mathcal N_{\mathrm{sep}}^S(t)
		=
		\sum_{\pm}
		\sum_{N_1\ge 2}
		\sum_{N_2\ll N_1}
		\int_0^t
		e^{i(t-t')\Delta}
		\big[
		(P_{N_1}w_\pm)(P_{N_2}u)
		\big](t')\,dt'
	\end{equation*}
	and
	\begin{equation*}
		\mathcal N_{\mathrm{sep}}^{W,\pm}(t)
		=
		\sum_{N_1\ge 2}
		\sum_{N_2\ll N_1}
		\int_0^t
		e^{\mp i(t-t')|D|}
		|D|
		\big[
		(P_{N_1}u)\overline{P_{N_2}u}
		+
		(P_{N_2}u)\overline{P_{N_1}u}
		\big](t')\,dt'.
	\end{equation*}
	\begin{rem}
		The operators $\mathcal N_{\mathrm{sep}}^S(t)$ and $\mathcal N_{\mathrm{sep}}^{W,\pm}(t)$ are precisely the nonlinear Duhamel terms in (\ref{za}) restricted to the ``high-low" interactions. In fact, we restrict our attention to the regime ``$N_1\gg N_2$", in which our bilinear estimate is strictly stronger than the linear one. We restrict to $N_1\ge 2$, since the case $N_1=N_2=1$ is much easier to handle.
		
		However, we should mention that the above ``high-low" interaction need not be the worst case for the Zakharov system, since it automatically avoids the resonant set 
		\begin{equation*}
			\Gamma_{res}^{\pm}:=\lbrace (k_1,k_2,k_3): k_1-k_2+k_3=0,\; |k_1|^2-|k_2|^2\pm |k_3|=0\rbrace.
		\end{equation*}
		Thus, for the full nonlinear Duhamel term, the nonlinear smoothing result might be weaker than that in Lemma \ref{thm:separated-smoothing}.
	\end{rem}

We first recall a bilinear transference principle that allows us to
apply the bilinear Strichartz estimate to general space-time functions
localized to a unit neighborhood of the Schr\"odinger characteristic
surface. The argument is quite standard in Bourgain spaces, and we refer to \cite[Lemma 2.2]{kinoshita2023decoupling} for the linear analogue.

\begin{lemma}\label{lem:bilinear-transference}
	Let $d\ge4$ and $N_1\ge N_2\ge1$. By (\ref{bilinear Strichartz true}), the following
	bilinear Strichartz estimate holds uniformly for
	$r_1\sim N_1$ and $r_2\sim N_2$:
	\begin{equation}\label{eq:free-bilinear-shell}
		\left\|
		e^{it\Delta}\phi_1
		e^{it\Delta}\phi_2
		\right\|_{L^2_{t,x}([0,1]\times\mathbb T^d)}
		\lesssim_\epsilon
		N_2^\epsilon
		N_2^{\frac{2d-7}{4}}\left(1+\frac{N_2^2}{N_1}\right)^{1/4}
		\|\phi_1\|_{L^2_x}
		\|\phi_2\|_{L^2_x},
	\end{equation}
	whenever
	\begin{equation*}
		\operatorname{supp}\widehat{\phi_j}
		\subset
		\left\{
		k\in\mathbb Z^d:
		\big||k|-r_j\big|\lesssim1
		\right\},
		\qquad j=1,2.
	\end{equation*}
	
	Let $c_1,c_2\in\mathbb R$, and suppose that
	$u_1,u_2\in L^2(\mathbb R\times\mathbb T^d)$ satisfy
	\begin{equation*}
		\operatorname{supp}\widetilde{u_j}
		\subset
		\left\{
		(\tau,k)\in\mathbb R\times\mathbb Z^d:
		|\tau+|k|^2-c_j|\lesssim1,\,
		\big||k|-r_j\big|\lesssim1
		\right\},
	\end{equation*}
	for $j=1,2$. Then
	\begin{equation}\label{eq:transferred-bilinear-shell}
		\|u_1u_2\|_{L^2_{t,x}}
		\lesssim_\epsilon
		N_2^\epsilon
		N_2^{\frac{2d-7}{4}}\left(1+\frac{N_2^2}{N_1}\right)^{1/4}
		\|u_1\|_{L^2_{t,x}}
		\|u_2\|_{L^2_{t,x}}.
	\end{equation}
	The implicit constant is uniform in $c_1$ and $c_2$.
\end{lemma}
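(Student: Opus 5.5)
The plan is the standard transference argument in Bourgain spaces: represent each $u_j$ as a superposition in a modulation parameter of free Schr\"odinger solutions, apply \eqref{eq:free-bilinear-shell} pointwise in that parameter, and sum using Minkowski and Cauchy--Schwarz. The unit modulation window is what makes the Cauchy--Schwarz step lossless.

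\textbf{Step 1 (free-wave representation).} For $j=1,2$, write $\widetilde{u_j}(\tau,k)$ with $\tau=-|k|^2+c_j+\lambda$, where $|\lambda|\lesssim1$. Inverting the Fourier transform gives
\begin{equation*}
u_j(t,x)=\int_{|\lambda|\lesssim1} e^{it(c_j+\lambda)}\,\big(e^{it\Delta}\phi_{j,\lambda}\big)(x)\,d\lambda,
\qquad
\widehat{\phi_{j,\lambda}}(k):=\widetilde{u_j}(-|k|^2+c_j+\lambda,k),
\end{equation*}
up to harmless $2\pi$ normalizations in the sign convention for $e^{it\Delta}$. Each $\phi_{j,\lambda}$ inherits the support condition $\big||k|-r_j\big|\lesssim1$. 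Plancherel in $(t,x)$ gives
\begin{equation*}
\int\|\phi_{j,\lambda}\|_{L^2_x}^2\,d\lambda=\|u_j\|_{L^2_{t,x}}^2 .
\end{equation*}
The constants $c_j$ enter only through the unimodular factor $e^{itc_j}$, so the bound is automatically uniform in $c_1,c_2$.

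\textbf{Step 2 (unit time intervals).} The estimate \eqref{eq:free-bilinear-shell} is stated only on $[0,1]$, whereas the left side of \eqref{eq:transferred-bilinear-shell} integrates over all $t\in\mathbb R$. I would use a smooth partition of unity in time, $1=\sum_{n\in\mathbb Z}\chi(t-n)$, with $\chi$ supported in an interval of length $2$. The bilinear estimate is invariant under time translation, since replacing $\phi_j$ by $e^{in\Delta}\phi_j$ preserves both the $L^2$ norm and the Fourier support. It therefore holds on each $[n,n+2]$, and by finite overlap
\begin{equation*}
\|u_1u_2\|_{L^2(\mathbb R\times\mathbb T^d)}^2\lesssim\sum_n\|u_1u_2\|_{L^2([n,n+2]\times\mathbb T^d)}^2 .
\end{equation*}
On $[n,n+2]$, the representation from Step 1 together with Minkowski in $\lambda_1,\lambda_2$ and \eqref{eq:free-bilinear-shell} gives
\begin{equation*}
\|u_1u_2\|_{L^2([n,n+2])}\lesssim C(N_1,N_2)\int\!\!\int\|\phi_{1,\lambda_1}\|_{L^2}\,\|\phi_{2,\lambda_2}\|_{L^2}\,d\lambda_1 d\lambda_2,
\end{equation*}
where $C(N_1,N_2)$ denotes the constant in \eqref{eq:free-bilinear-shell}. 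By Cauchy--Schwarz over the $O(1)$-length $\lambda$-intervals, the right side is at most $C(N_1,N_2)\|u_1\|_{L^2}\|u_2\|_{L^2}$.

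\textbf{Main obstacle: summing over $n$.} The bound from Step 2 does not localize in $n$, so summing the global bound over all $n\in\mathbb Z$ diverges. The fix is to localize the $\phi_{j,\lambda}$ in time before applying Minkowski. Set $u_{j,n}:=\chi_n u_j$, where $\chi_n$ is a smooth bump adapted to a neighbourhood of $[n,n+2]$ whose time Fourier transform has compact support, say in $|\sigma|\le1$. Then $u_{j,n}$ still has modulation $\lesssim1$ and the same shell support. The argument of Step 2, run for $u_{1,n}u_{2,n}$, bounds $\|u_1u_2\|_{L^2([n,n+2])}$ by $C(N_1,N_2)\,\|u_{1,n}\|_{L^2}\,\|u_{2,n}\|_{L^2}$. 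A compactly supported $\chi_n$ cannot have compactly supported Fourier transform, so $\chi_n$ is not literally localized to $[n,n+2]$. Instead $\chi_n\gtrsim1$ on $[n,n+2]$ and $\chi_n$ decays rapidly away from it. Consequently
\begin{equation*}
\sum_n \|u_{j,n}\|_{L^2}^4\le\Big(\sum_n\|u_{j,n}\|_{L^2}^2\Big)^2\lesssim \|u_j\|_{L^2}^4 ,
\end{equation*}
because $\sum_n|\chi_n|^2\lesssim1$. Applying Cauchy--Schwarz in $n$ then yields \eqref{eq:transferred-bilinear-shell}.
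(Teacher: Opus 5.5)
Your proposal is correct and is essentially the paper's argument: localize in time with Schwartz cutoffs whose Fourier transforms are compactly supported, so that the $O(1)$ modulation window and the shell support survive. Then write each localized piece $\psi_m u_j$ as a superposition of free solutions over a bounded modulation interval, apply the time-translated free estimate via Minkowski, use Cauchy--Schwarz in the modulation parameter and Plancherel, and sum over $m$ using $\sum_m|\psi_m|^2\lesssim1$. The only cosmetic difference is that the paper uses $\sum_m|\psi_m|^8\gtrsim1$ to bound $\|u_1u_2\|_{L^2}^2\lesssim\sum_m\|\psi_m^4u_1u_2\|_{L^2}^2$ on all of $\mathbb R$ and closes with $\sum_m a_mb_m\le(\sum_m a_m)(\sum_m b_m)$, whereas you restrict sharply to $[n,n+2]$ (where $\chi_n\gtrsim1$) and close with Cauchy--Schwarz in $n$.
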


\begin{proof}
	Choose a nonnegative function $\psi\in\mathcal S(\mathbb R)$ whose
	temporal Fourier transform has compact support and such that
	\begin{equation}\label{pa}
		1
		\lesssim
		\sum_{m\in\mathbb Z}|\psi(t-m)|^8
		,\quad
		\sum_{m\in\mathbb Z}|\psi(t-m)|^2
		\lesssim1
	\end{equation}
	uniformly in $t\in\mathbb R$. Set
	\begin{equation*}
		\psi_m(t)=\psi(t-m).
	\end{equation*}
	It follows that
	\begin{equation}\label{eq:time-partition-product}
		\|u_1u_2\|_{L^2_{t,x}}^2
		\lesssim
		\sum_{m\in\mathbb Z}
		\|\psi_m^4u_1u_2\|_{L^2_{t,x}}^2.
	\end{equation}
	
	For $j=1,2$, define
	\begin{equation*}
		u_{j,m}=\psi_m u_j.
	\end{equation*}
	Then
	\begin{equation*}
		\psi_m^4u_1u_2
		=
		\psi_m^2u_{1,m}u_{2,m}.
	\end{equation*}
	
	We now remove the Schr\"odinger evolution from $u_{j,m}$. Define
	$\phi_{j,m,\sigma}$ by
	\begin{equation*}
		\phi_{j,m,\sigma}
		=
		\mathcal F_t
		\left[
		e^{-it\Delta}u_{j,m}
		\right](\sigma).
	\end{equation*}
	Since
	\begin{equation*}
		\operatorname{supp}\widetilde{u_j}
		\subset
		\left\{
		|\tau+|k|^2-c_j|\lesssim1
		\right\}
	\end{equation*}
	and $\widehat\psi$ has fixed compact support, there exists a fixed
	constant $C>0$, independent of $m$, $c_j$, $N_1$, and $N_2$, such
	that
	\begin{equation*}
		\phi_{j,m,\sigma}=0,
		\qquad
		\text{unless }
		|\sigma-c_j|\le C.
	\end{equation*}
	Moreover,
	\begin{equation}\label{eq:representation-modulation-piece}
		u_{j,m}(t)
		=
		\int_{|\sigma-c_j|\le C}
		e^{it\sigma}e^{it\Delta}
		\phi_{j,m,\sigma}\,d\sigma.
	\end{equation}
	The spatial Fourier support of $\phi_{j,m,\sigma}$ is contained in
	the same radial shell as that of $u_j$.
	
	Substituting \eqref{eq:representation-modulation-piece}, we obtain
	\begin{equation*}
		\psi_m^2u_{1,m}u_{2,m}
		=
		\int_{|\sigma_1-c_1|\le C}
		\int_{|\sigma_2-c_2|\le C}
		e^{it(\sigma_1+\sigma_2)}
		\psi_m^2
		e^{it\Delta}\phi_{1,m,\sigma_1}
		e^{it\Delta}\phi_{2,m,\sigma_2}
		\,d\sigma_1d\sigma_2.
	\end{equation*}
	By Minkowski's inequality and the time-translated version of
	\eqref{eq:free-bilinear-shell},
	\begin{equation*}
		\|\psi_m^2u_{1,m}u_{2,m}\|_{L^2_{t,x}}
		\lesssim_\epsilon
		N_2^\epsilon
		N_2^{\frac{2d-7}{4}}\left(1+\frac{N_2^2}{N_1}\right)^{1/4}
	\end{equation*}
	\begin{equation*}
		\qquad\qquad\times
		\int_{|\sigma_1-c_1|\le C}
		\int_{|\sigma_2-c_2|\le C}
		\|\phi_{1,m,\sigma_1}\|_{L^2_x}
		\|\phi_{2,m,\sigma_2}\|_{L^2_x}
		\,d\sigma_1d\sigma_2.
	\end{equation*}
	Here the estimate is uniform in $m$, since after the change of
	variable $t\mapsto t-m$, one has
	\begin{equation*}
		e^{it\Delta}\phi
		=
		e^{i(t-m)\Delta}e^{im\Delta}\phi,
	\end{equation*}
	and $e^{im\Delta}$ is unitary on $L^2(\mathbb T^d)$ and preserves
	the spatial Fourier support.
	
	The two $\sigma_j$-intervals have bounded length. Hence,
	Cauchy--Schwarz in $\sigma_1$ and $\sigma_2$ gives
	\begin{equation*}
		\|\psi_m^2u_{1,m}u_{2,m}\|_{L^2_{t,x}}
		\lesssim_\epsilon
		N_2^\epsilon
		N_2^{\frac{2d-7}{4}}\left(1+\frac{N_2^2}{N_1}\right)^{1/4}
		\left(
		\int_{\R}
		\|\phi_{1,m,\sigma_1}\|_{L^2_x}^2
		\,d\sigma_1
		\right)^{1/2}
		\left(
		\int_{\R}
		\|\phi_{2,m,\sigma_2}\|_{L^2_x}^2
		\,d\sigma_2
		\right)^{1/2}.
	\end{equation*}
	By Plancherel's theorem and the unitarity of $e^{-it\Delta}$,
	\begin{equation*}
		\int_{\R}
		\|\phi_{j,m,\sigma}\|_{L^2_x}^2\,d\sigma
		=
		\|u_{j,m}\|_{L^2_{t,x}}^2.
	\end{equation*}
	Therefore,
	\begin{equation}\label{eq:localized-transfer}
		\|\psi_m^4u_1u_2\|_{L^2_{t,x}}
		\lesssim_\epsilon
		N_2^\epsilon
		N_2^{\frac{2d-7}{4}}\left(1+\frac{N_2^2}{N_1}\right)^{1/4}
		\|\psi_m u_1\|_{L^2_{t,x}}
		\|\psi_m u_2\|_{L^2_{t,x}}.
	\end{equation}
	
	Squaring \eqref{eq:localized-transfer}, summing over $m$, and using
	\eqref{eq:time-partition-product}, we obtain
	\begin{equation*}
		\|u_1u_2\|_{L^2_{t,x}}^2
		\lesssim_\epsilon
		N_2^{2\epsilon}
		N_2^{\frac{2d-7}{2}}\left(1+\frac{N_2^2}{N_1}\right)^{1/2}
		\sum_m
		\|\psi_m u_1\|_{L^2_{t,x}}^2
		\|\psi_m u_2\|_{L^2_{t,x}}^2.
	\end{equation*}
	Since all terms are nonnegative,
	\begin{equation*}
		\sum_m
		\|\psi_m u_1\|_{L^2}^2
		\|\psi_m u_2\|_{L^2}^2
		\le
		\left(
		\sum_m\|\psi_m u_1\|_{L^2}^2
		\right)
		\left(
		\sum_m\|\psi_m u_2\|_{L^2}^2
		\right).
	\end{equation*}
	Finally, by (\ref{pa}),
	\begin{equation*}
		\sum_m\|\psi_m u_j\|_{L^2_{t,x}}^2
		\lesssim
		\|u_j\|_{L^2_{t,x}}^2,
	\end{equation*}
	which proves \eqref{eq:transferred-bilinear-shell}.
\end{proof}

The main ingredient is the following high--low trilinear estimate.
The shell localization needed to apply the bilinear shell-type
estimate will be established as part of its proof. The localization argument is adapted from \cite{kinoshita2023decoupling}.

\begin{lemma}\label{lem:high-low-trilinear}
	Let $d\ge4$, $N_1\ge2$, $1\le N_2\ll N_1$, and let $L_1,L_2,L_3\ge1$ be dyadic. Suppose that
	$u_1,v_2,w_3^\pm\in L^2(\mathbb R\times\mathbb T^d)$ satisfy
	\begin{equation*}
		u_1=P_{N_1}Q_{L_1}^S u_1,\quad  v_2=P_{N_2}Q_{L_2}^S u_2, \quad w_3=P_{N_1}Q_{L_3}^{W_{\pm}}w_3.
	\end{equation*}
	Set
	\begin{equation*}
		L_{\min}=\min\{L_1,L_2,L_3\}.
	\end{equation*}
	Then, for every $0\le\theta<1$ and every $\epsilon>0$,
	\begin{equation}\label{eq:high-low-trilinear}
		\left|
		\iint_{\mathbb R\times\mathbb T^d}
		u_1\overline{v_2}w_3^\pm\,dt\,dx
		\right|
		\lesssim_\epsilon
		N_2^{\frac d2-\frac{7\theta}{4}+\epsilon}
		\left(
		1+\frac{N_2^2}{N_1}
		\right)^{\theta/4}
		L_{\min}^{\frac{1-\theta}{2}}
		(L_1L_2L_3)^{\theta/2}
		\|u_1\|_{L^2}
		\|v_2\|_{L^2}
		\|w_3^\pm\|_{L^2}.
	\end{equation}
\end{lemma}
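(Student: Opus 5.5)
The estimate will follow by interpolating between two bounds for the trilinear form: a trivial bound (the $\theta=0$ endpoint) and a shell-type bound (the $\theta=1$ endpoint). Write $I:=\iint u_1\overline{v_2}w_3^\pm$.

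\emph{Endpoint $\theta=0$.} Put the two factors with largest modulation in $L^2$ and the one with smallest modulation in $L^\infty_tL^2_x$. Then apply Bernstein on the low-frequency factor together with the time-localization bound $\|Q_L^S f\|_{L^\infty_t L^2_x}\lesssim L^{1/2}\|f\|_{L^2}$.
\begin{itemize}
\item If $L_{\min}$ belongs to $v_2$, use $\|v_2\|_{L^\infty_{t,x}}\lesssim N_2^{d/2}L_2^{1/2}\|v_2\|_{L^2}$.
\item If $L_{\min}$ belongs to $u_1$ or $w_3$, localize $v_2$ in space. The frequencies of $u_1$ and $w_3$ must then differ by at most $2N_2$, so we split them into $N_2$-cubes and use Cauchy--Schwarz over the cube pairs.
\end{itemize}
In each case this gives $|I|\lesssim N_2^{d/2}L_{\min}^{1/2}\prod\|\cdot\|_{L^2}$.

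\emph{Endpoint $\theta=1$.} Here the target is $|I|\lessapprox N_2^{\frac{d}{2}-\frac74}(1+N_2^2/N_1)^{1/4}(L_1L_2L_3)^{1/2}\prod\|\cdot\|_{L^2}$, and this is where the shell structure must be found.

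Following the localization of \cite{kinoshita2023decoupling}, decompose $\widetilde{u_1}$, $\widetilde{v_2}$, $\widetilde{w_3}$ into unit modulation slabs $|\tau+|k|^2-c_j|\le1$, respectively $|\tau\pm|k|-c_3|\le1$. This costs $L_j^{1/2}$ per factor by Cauchy--Schwarz across slabs. The convolution constraint $\tau_1=\tau_2+\tau_3$, $k_1=k_2+k_3$ gives
\[
|k_1|^2-|k_2|^2\pm|k_3|=c_1-c_2-c_3+O(1),
\]
with $|k_2|\le 2N_2\ll N_1$. We use this relation to localize $u_1$ to a unit radial shell.
\begin{itemize}
\item Further decompose $v_2$ into unit radial shells $||k_2|-r_2|\le1$; there are $O(N_2)$ of them.
\item Decompose $u_1$ and $w_3$ into $N_2$-balls, paired so that their frequencies are within $O(N_2)$.
\item On such a pair, $|k_3|=|k_1-k_2|$ varies by $O(N_2)$, and $|k_1|^2-|k_3|$ is fixed to within $O(1)$.
\item Since $\partial_{|k_1|}(|k_1|^2)\sim N_1$, this pins $|k_1|$ to an interval of length $O(1/N_1)+O(N_2/N_1)$. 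That length is $O(1)$ because $N_2\ll N_1$, so $u_1$ lies in a unit shell.
\end{itemize}

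On each such piece, bound $|I|\le\|u_1v_2\|_{L^2}\|w_3\|_{L^2}$ and apply Lemma \ref{lem:bilinear-transference} to $u_1v_2$. This yields $N_2^{\frac{2d-7}{4}}(1+N_2^2/N_1)^{1/4}$ per shell pair.

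The main obstacle is summing over the $O(N_2)$ radial shells of $v_2$. For a fixed $u_1$ shell, Cauchy--Schwarz across the $v_2$ shells costs $N_2^{1/2}$. Combined with the factor above, this gives $N_2^{\frac{2d-5}{4}}=N_2^{\frac d2-\frac54}$, which is worse than the claimed $N_2^{\frac d2-\frac74}$ by exactly $N_2^{1/2}$. I expect this to be the delicate point.

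To recover the $N_2^{1/2}$, I would use the resonance relation again: for fixed $c_j$ and a fixed $u_1$ shell, $|k_3|$ is determined to within $O(1)$ by $|k_1|^2$, so $w_3$ also lies in a unit shell. The $v_2$ shells should then pair almost orthogonally with the $(u_1,w_3)$ shell pairs, giving $\ell^2$ summation over shells rather than $\ell^1$, with the remaining $N_2$-cube decomposition handled by orthogonality. If this fails, I would fall back on the exponent bookkeeping dictated by the stated form $N_2^{\frac d2-\frac{7\theta}4}$ and recheck where the half power is absorbed.

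Finally, interpolate. Raise the $\theta=0$ bound to the power $1-\theta$ and the $\theta=1$ bound to the power $\theta$; since both bound the same quantity $|I|$, the geometric mean of the two bounds also bounds $|I|$. The resulting exponent of $N_2$ is $\frac d2(1-\theta)+\left(\frac d2-\frac74\right)\theta=\frac d2-\frac{7\theta}4$, the $N_1$-dependent factor becomes $(1+N_2^2/N_1)^{\theta/4}$, and the modulation factor becomes $L_{\min}^{\frac{1-\theta}{2}}(L_1L_2L_3)^{\theta/2}$, which is \eqref{eq:high-low-trilinear}.
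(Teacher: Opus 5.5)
Your $\theta=0$ endpoint and the final interpolation step are fine. The H\"older route with $\|Q_Lf\|_{L^\infty_tL^2_x}\lesssim L^{1/2}\|f\|_{L^2}$ and $\|v_2\|_{L^2_tL^\infty_x}\lesssim N_2^{d/2}\|v_2\|_{L^2}$ works, and the $N_2$-cube splitting is not even needed there; the paper instead uses Plancherel and Cauchy--Schwarz in the convolution variables.

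The genuine gap is the $\theta=1$ endpoint, which is the real content of the lemma. Your bookkeeping only reaches $N_2^{\frac{2d-5}{4}}$, and the proposed repair rests on a false claim. For a fixed unit $u_1$-shell, $|k_3|$ is \emph{not} determined to within $O(1)$. The resonance relation reads $\pm|k_3|=c-|k_1|^2+|k_2|^2+O(1)$. On a unit $k_1$-shell $|k_1|^2$ varies by $\sim N_1$, and on a unit $k_2$-shell $|k_2|^2$ varies by $\sim N_2$. So the best available information is $|k_3|=|\xi_\alpha|+O(N_2)$, which the $N_2$-ball pairing already gives.

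The actual source of your loss is elsewhere. You derived that $|k_1|$ lies in an interval of length $O(N_2/N_1)$, and then rounded this up to a unit shell. Adjacent $v_2$-shells shift $|k_2|^2$, hence the admissible $|k_1|^2$, by $\sim N_2$; that is, they shift $|k_1|$ by only $\sim N_2/N_1$. So a single unit $u_1$-shell is shared by $\sim\min\{N_2,N_1/N_2\}$ different $v_2$-shells. Cauchy--Schwarz over these shells is exactly your $N_2^{1/2}$ loss when $N_1\ge N_2^2$.

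The missing idea is to keep the finer localization of $u_1$, indexed jointly by the $w_3$-ball and the $v_2$-shell. Fix unit-modulation slabs $U,V,W$.
\begin{itemize}
\item On an $N_2$-ball $B_\alpha=B(\xi_\alpha,CN_2)$ with unit wave modulation, $\tau_3$ lies in a window $J_\alpha$ of length $O(N_2)$.
\item Cut $V$ into pieces $V_m$ with $\tau_2$ in windows $I_m$ of length $O(N_2)$; equivalently, these are unit $k_2$-shells.
\item The convolution constraint then forces $k_1\in B(-\xi_\alpha,C'N_2)$ and puts $\tau_1$ in a window $K_{\alpha,m}$ of length $O(N_2)$. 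For fixed $\alpha$, these windows have bounded overlap in $m$.
\end{itemize}
Let $U_{\alpha,m}$ be the restriction of $U$ to these sets. Then $\sum_{\alpha,m}\|U_{\alpha,m}\|_{L^2}^2\lesssim\|U\|_{L^2}^2$, and each $U_{\alpha,m}$ still satisfies the hypotheses of Lemma \ref{lem:bilinear-transference}, since its radial width is $O(N_2/N_1)\le1$. With $A:=N_2^{\frac{2d-7}{4}}(1+N_2^2/N_1)^{1/4}$,
\begin{align*}
\Big|\iint U\overline{V}W\,dt\,dx\Big|
&\lessapprox A\sum_{\alpha,m}\|U_{\alpha,m}\|_{L^2}\|V_m\|_{L^2}\|W_\alpha\|_{L^2}\\
&\le A\Big(\sum_{\alpha,m}\|U_{\alpha,m}\|_{L^2}^2\Big)^{1/2}\Big(\sum_{m}\|V_m\|_{L^2}^2\Big)^{1/2}\Big(\sum_{\alpha}\|W_\alpha\|_{L^2}^2\Big)^{1/2}.
\end{align*}
The second line uses that $\sum_{\alpha,m}\|V_m\|^2\|W_\alpha\|^2=(\sum_m\|V_m\|^2)(\sum_\alpha\|W_\alpha\|^2)$. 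Each $u_1$-piece is used once, and the pieces of $v_2$ and $w_3$ are reused only in this product pattern, so no power of $N_2$ is lost. Summing over the unit-modulation slabs then costs $(L_1L_2L_3)^{1/2}$. This is how the paper obtains the $\theta=1$ endpoint.
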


\begin{proof}
	We prove two estimates and then interpolate between them.
	
	We first use only the sizes of the Fourier supports. By Plancherel's
	theorem and Cauchy--Schwarz in the convolution variables, the
	spatial frequency of the low-frequency Schr\"odinger factor has at
	most $O(N_2^d)$ possible values. Moreover, after fixing the spatial
	frequencies, the length of the relevant temporal fiber is bounded
	by $O(L_{\min})$. Therefore,
	\begin{equation}\label{eq:crude-trilinear}
		\left|
		\iint
		u_1\overline{v_2}w_3^\pm\,dt\,dx
		\right|
		\lesssim
		N_2^{d/2}
		L_{\min}^{1/2}
		\|u_1\|_{L^2}
		\|v_2\|_{L^2}
		\|w_3^\pm\|_{L^2}.
	\end{equation}
	
	We next prove an estimate that exploits the shell structure.
	
	For each of the three factors, decompose the modulation region into
	translated intervals of length $O(1)$. Thus,
	\begin{equation*}
		u_1=\sum_{c_1\in\mathcal C_1}u_{1,c_1},
		\qquad
		v_2=\sum_{c_2\in\mathcal C_2}v_{2,c_2},
		\qquad
		w_3^\pm=\sum_{c_3\in\mathcal C_3}w_{3,c_3}^\pm,
	\end{equation*}
	where
	\begin{equation*}
		\#\mathcal C_j\lesssim L_j,
		\qquad j=1,2,3,
	\end{equation*}
	and
	\begin{equation*}
		\operatorname{supp}\widetilde{u_{1,c_1}}
		\subset
		\left\{
		(\tau_1,k_1):
		|\tau_1+|k_1|^2-c_1|\lesssim1,\,
		|k_1|\sim N_1
		\right\},
	\end{equation*}
	\begin{equation*}
		\operatorname{supp}\widetilde{v_{2,c_2}}
		\subset
		\left\{
		(\tau_2,k_2):
		|\tau_2+|k_2|^2-c_2|\lesssim1,\,
		|k_2|\sim N_2
		\right\},
	\end{equation*}
	and
	\begin{equation*}
		\operatorname{supp}\widetilde{w_{3,c_3}^\pm}
		\subset
		\left\{
		(\tau_3,k_3):
		|\tau_3\pm|k_3|-c_3|\lesssim1,\,
		|k_3|\sim N_1
		\right\}.
	\end{equation*}
	These decompositions may be chosen with bounded overlap, and hence
	\begin{equation}\label{eq:unit-mod-orthogonality}
		\sum_{c_1\in\mathcal C_1}
		\|u_{1,c_1}\|_{L^2}^2
		\lesssim
		\|u_1\|_{L^2}^2,\quad 
		\sum_{c_2\in\mathcal C_2}
		\|v_{2,c_2}\|_{L^2}^2
		\lesssim
		\|v_2\|_{L^2}^2,
	\end{equation}
	and
	\begin{equation*}
		\sum_{c_3\in\mathcal C_3}
		\|w_{3,c_3}^\pm\|_{L^2}^2
		\lesssim
		\|w_3^\pm\|_{L^2}^2.
	\end{equation*}
	
	Fix $(c_1,c_2,c_3)$. For brevity, write
	\begin{equation*}
		U=u_{1,c_1},
		\qquad
		V=v_{2,c_2},
		\qquad
		W=w_{3,c_3}^\pm.
	\end{equation*}
	
	We first decompose the spatial Fourier support of the wave factor.
	Let $\{B_\alpha\}_{\alpha\in\mathcal A}$ be a finitely overlapping
	cover of the annulus $\{|k_3|\sim N_1\}$ by balls
	\begin{equation*}
		B_\alpha
		=
		B(\xi_\alpha,CN_2),
		\qquad
		|\xi_\alpha|\sim N_1.
	\end{equation*}
	Using a smooth partition of unity subordinate to this cover, write
	\begin{equation*}
		W=\sum_{\alpha\in\mathcal A}W_\alpha,\quad 
		\operatorname{supp}_k\widehat W_\alpha
		\subset
		B_\alpha.
	\end{equation*}
	By Plancherel, we have
	\begin{equation}\label{eq:wave-spatial-orthogonality}
		\sum_{\alpha\in\mathcal A}
		\|W_\alpha\|_{L^2}^2
		\lesssim
		\|W\|_{L^2}^2.
	\end{equation}
	
	For $k_3\in B(\xi_\alpha,CN_2)$, we have
	\begin{equation*}
		\big||k_3|-|\xi_\alpha|\big|
		\lesssim N_2.
	\end{equation*}
	Since $W$ is restricted to a translated unit wave-modulation
	region,
	\begin{equation*}
		|\tau_3\pm|k_3|-c_3|\lesssim1,
	\end{equation*}
	there exists an interval $J_\alpha$ of length $O(N_2)$ such that
	\begin{equation}\label{eq:wave-time-localization}
		\operatorname{supp}_\tau\widehat W_\alpha
		\subset J_\alpha.
	\end{equation}
	
	We next decompose the temporal Fourier support of $V$. Let
	$\{I_m\}_{m\in \Z}$ be a finitely overlapping cover of
	$\mathbb R$ by intervals of length $O(N_2)$, and choose a smooth
	partition of unity subordinate to this cover. 
	
	Write
	\begin{equation*}
		V=\sum_{m\in \Z}V_m,\quad 
		\operatorname{supp}_\tau\widehat V_m
		\subset I_m.
	\end{equation*}
	Plancherel also gives
	\begin{equation}\label{eq:low-time-orthogonality}
		\sum_{m\in N_2\mathbb Z}
		\|V_m\|_{L^2}^2
		\lesssim
		\|V\|_{L^2}^2.
	\end{equation}
	
	Consider a nonzero contribution
	\begin{equation*}
		\iint U\overline{V_m}W_\alpha\,dt\,dx=\iint_{\Gamma}\widetilde{U}(\tau_1,k_1)\widetilde{\overline{V_m}}(\tau_2,k_2)\widetilde{W_{\alpha}}(\tau_3,k_3),
	\end{equation*}
	where $(k_1,k_2,k_3)\in \Gamma$ satisfy
	\begin{equation}\label{eq:fourier-conservation}
		k_1-k_2+k_3=0,
		\qquad
		\tau_1-\tau_2+\tau_3=0.
	\end{equation}
	Since $|k_2|\sim N_2$ and $k_3\in B(\xi_\alpha,CN_2)$, the first relation in \eqref{eq:fourier-conservation} implies
	\begin{equation}\label{eq:high-spatial-ball}
		k_1\in B(-\xi_\alpha,C'N_2)
	\end{equation}
	for a fixed sufficiently large $C'$.
	
	On the other hand,
	\begin{equation*}
		\tau_2\in I_m,
		\qquad
		\tau_3\in J_\alpha.
	\end{equation*}
	The second relation in \eqref{eq:fourier-conservation} therefore
	forces
	\begin{equation*}
		\tau_1\in K_{\alpha,m},
	\end{equation*}
	where $K_{\alpha,m}$ is an interval of length $O(N_2)$.
	
	Let $U_{\alpha,m}$ denote a smooth Fourier restriction of $U$ to
	a fixed enlargement of
	\begin{equation*}
		\left\{
		(\tau_1,k_1):
		k_1\in B(-\xi_\alpha,C'N_2),\,
		\tau_1\in K_{\alpha,m}
		\right\}.
	\end{equation*}
	Then
	\begin{equation*}
		\iint UV_mW_\alpha\,dt\,dx
		=
		\iint U_{\alpha,m}V_mW_\alpha\,dt\,dx.
	\end{equation*}
	Hence
	\begin{equation}\label{eq:piece-decomposition}
		\iint UVW\,dt\,dx
		=
		\sum_{\alpha\in\mathcal A}
		\sum_{m\in \Z}
		\iint U_{\alpha,m}V_mW_\alpha\,dt\,dx.
	\end{equation}
	
	Note that the enlarged balls
	\begin{equation*}
		B(-\xi_\alpha,C'N_2)
	\end{equation*}
	have uniformly bounded overlap. For each fixed $\alpha$, the
	intervals $K_{\alpha,m}$ also have uniformly bounded overlap,
	since the intervals $I_m$ have length $O(N_2)$ and their centers
	are separated by a constant multiple of $N_2$. Thus, we obtain
	\begin{equation}\label{eq:high-orthogonality}
		\sum_{\alpha\in\mathcal A}
		\sum_{m\in \Z}
		\|U_{\alpha,m}\|_{L^2}^2
		\lesssim
		\|U\|_{L^2}^2.
	\end{equation}
	
	We now obtain the radial shell localization. For $(\tau_2,k_2)
	\in \operatorname{supp}\widetilde V_m$, we have
	\begin{equation*}
		|\tau_2+|k_2|^2-c_2|\lesssim1
	\end{equation*}
	and $\tau_2$ belongs to an interval of length $O(N_2)$. Hence,
	$|k_2|^2$ belongs to an interval of length $O(N_2)$. Since
	\begin{equation*}
		|k_2|\sim N_2,
	\end{equation*}
	the mean value theorem implies that there exists
	$r_{2,m}\sim N_2$ such that
	\begin{equation*}
		\operatorname{supp}_k\widehat V_m
		\subset
		\left\{
		k_2\in\mathbb Z^d:
		\big||k_2|-r_{2,m}\big|\lesssim1
		\right\}.
	\end{equation*}
	
	Similarly,
	\begin{equation*}
		|\tau_1+|k_1|^2-c_1|\lesssim1
	\end{equation*}
	on the support of $U_{\alpha,m}$, while $\tau_1$ belongs to an
	interval of length $O(N_2)$. Hence, $|k_1|^2$ belongs to an
	interval of length $O(N_2)$. Since
	\begin{equation*}
		|k_1|\sim N_1,
	\end{equation*}
	there exists $r_{1,\alpha,m}\sim N_1$ such that
	\begin{equation*}
		\operatorname{supp}_k\widehat U_{\alpha,m}
		\subset
		\left\{
		k_1\in\mathbb Z^d:
		\big||k_1|-r_{1,\alpha,m}\big|
		\lesssim
		\frac{N_2}{N_1}\le 1
		\right\}.
	\end{equation*}
	
	Therefore, both $U_{\alpha,m}$ and $V_m$ satisfy the hypotheses of
	Lemma \ref{lem:bilinear-transference}. The additional localization
	of $U_{\alpha,m}$ to a ball of radius $O(N_2)$ only further restricts its
	Fourier support and is harmless. Hence,
	\begin{equation}\label{eq:bilinear-piece}
		\|U_{\alpha,m}V_m\|_{L^2}
		\lesssim_\epsilon
		N_2^\epsilon
		N_2^{\frac{2d-7}{4}}\left(1+\frac{N_2^2}{N_1}\right)^{1/4}
		\|U_{\alpha,m}\|_{L^2}
		\|V_m\|_{L^2}.
	\end{equation}
	By Cauchy--Schwarz in $(t,x)$,
	\begin{equation*}
		\left|
		\iint
		U_{\alpha,m}V_mW_\alpha\,dt\,dx
		\right|
		\lesssim_\epsilon
		N_2^\epsilon
		N_2^{\frac{2d-7}{4}}\left(1+\frac{N_2^2}{N_1}\right)^{1/4}
		\|U_{\alpha,m}\|_{L^2}
		\|V_m\|_{L^2}
		\|W_\alpha\|_{L^2}.
	\end{equation*}
	
	Summing over $\alpha$ and $m$ in
	\eqref{eq:piece-decomposition}, we obtain
	\begin{equation*}
		\left|
		\iint UVW\,dt\,dx
		\right|
		\lesssim_\epsilon
		N_2^\epsilon
		N_2^{\frac{2d-7}{4}}\left(1+\frac{N_2^2}{N_1}\right)^{1/4}
		\sum_{\alpha,m}
		\|U_{\alpha,m}\|_{L^2}
		\|V_m\|_{L^2}
		\|W_\alpha\|_{L^2}.
	\end{equation*}
	Applying Cauchy--Schwarz in the pair $(\alpha,m)$ gives
	\begin{equation*}
		\sum_{\alpha,m}
		\|U_{\alpha,m}\|_{L^2}
		\|V_m\|_{L^2}
		\|W_\alpha\|_{L^2}
		\le
		\left(
		\sum_{\alpha,m}
		\|U_{\alpha,m}\|_{L^2}^2
		\right)^{1/2}
		\left(
		\sum_{\alpha,m}
		\|V_m\|_{L^2}^2
		\|W_\alpha\|_{L^2}^2
		\right)^{1/2}.
	\end{equation*}
	The second sum factorizes as
	\begin{equation*}
		\sum_{\alpha,m}
		\|V_m\|_{L^2}^2
		\|W_\alpha\|_{L^2}^2
		=
		\left(
		\sum_m\|V_m\|_{L^2}^2
		\right)
		\left(
		\sum_\alpha\|W_\alpha\|_{L^2}^2
		\right).
	\end{equation*}
	Combining
	\eqref{eq:wave-spatial-orthogonality},
	\eqref{eq:low-time-orthogonality}, and
	\eqref{eq:high-orthogonality}, we conclude that
	\begin{equation*}
		\sum_{\alpha,m}
		\|U_{\alpha,m}\|_{L^2}
		\|V_m\|_{L^2}
		\|W_\alpha\|_{L^2}
		\lesssim
		\|U\|_{L^2}
		\|V\|_{L^2}
		\|W\|_{L^2}.
	\end{equation*}
	Thus, for every fixed $(c_1,c_2,c_3)$,
	\begin{equation*}
		\left|
		\int
		u_{1,c_1}v_{2,c_2}w_{3,c_3}^\pm
		\,dt\,dx
		\right|
		\lesssim_\epsilon
		N_2^\epsilon
		N_2^{\frac{2d-7}{4}}\left(1+\frac{N_2^2}{N_1}\right)^{1/4}
		\|u_{1,c_1}\|_{L^2}
		\|v_{2,c_2}\|_{L^2}
		\|w_{3,c_3}^\pm\|_{L^2}.
	\end{equation*}
	
	Finally, applying Cauchy--Schwarz separately in $c_1,c_2,c_3$ yields
	\begin{equation}\label{eq:shell-trilinear}
		\left|
		\int
		u_1v_2w_3^\pm\,dt\,dx
		\right|
		\lesssim_\epsilon
		N_2^{\frac{2d-7}{4}+\epsilon}
		\left(
		1+\frac{N_2^2}{N_1}
		\right)^{1/4}
		(L_1L_2L_3)^{1/2}
		\|u_1\|_{L^2}
		\|v_2\|_{L^2}
		\|w_3^\pm\|_{L^2}.
	\end{equation}
	
	Interpolating \eqref{eq:crude-trilinear} and
	\eqref{eq:shell-trilinear} with interpolation parameter
	$\theta\in[0,1)$ gives \eqref{eq:high-low-trilinear}.
	
\end{proof}

Before proving our desired nonlinear smoothing estimate, we briefly recall the following standard estimate for the Duhamel terms (see, e.g., \cite{ginibre1995probleme}):
\begin{lemma}\label{Gini}
	Let $s\in \R$, $\frac12<b<1$, $0<T<1$, $\delta>0$, and let $\varphi\in C_c^{\infty}(\R)$ satisfy $\varphi=1$ on $[-1,1]$ and $\supp\; \varphi\subseteq (-2,2)$. Then we have
	\begin{equation*}
		\left\|\varphi\left(\frac{t}{T}\right)\int_0^t e^{i(t-t')\Delta}F(t')dt'\right\|_{X_S^{s,b}}
		\lesssim T^{\delta}\|F\|_{X_S^{s,b-1+\delta}},
	\end{equation*}
	and
	\begin{equation*}
		\left\|\varphi\left(\frac{t}{T}\right)\int_0^t e^{\mp i(t-t')|D|}F(t')dt'\right\|_{X_{W_{\pm}}^{s,b}}
		\lesssim T^{\delta}\|F\|_{X_{W_{\pm}}^{s,b-1+\delta}}.
	\end{equation*}
\end{lemma}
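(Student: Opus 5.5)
The plan is to reduce both estimates to a one-dimensional statement in the time variable, by conjugating out the linear flow one spatial frequency at a time. For the Schr\"odinger case, take the spatial Fourier transform and write $F_k(t)=\mathcal F_xF(t,k)$. Set $g_k(t):=e^{it|k|^2}F_k(t)$. Then the $k$-th Fourier coefficient of the Duhamel term is $e^{-it|k|^2}\int_0^t g_k(t')\,dt'$. Since $\eta_L(\tau+|k|^2)$ becomes $\eta_L(\tau)$ after this conjugation, the dyadic sum defining $X_S^{s,b}$ is equivalent (by $\sum_L L^{2b}\eta_L^2\sim\langle\tau\rangle^{2b}$) to
\[
\|u\|_{X_S^{s,b}}^2\sim\sum_k\langle k\rangle^{2s}\big\|e^{it|k|^2}u_k\big\|_{H^b_t}^2 .
\]
The same holds for $X_{W_\pm}^{s,b}$ with $e^{\pm it|k|}$. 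It therefore suffices to prove the one-dimensional bound
\[
\Big\|\varphi(t/T)\int_0^t g\Big\|_{H^b_t}\lesssim T^{\delta}\|g\|_{H^{b-1+\delta}_t},\qquad \tfrac12<b<1,
\]
uniformly in $0<T<1$, and then square-sum over $k$. Both the Schr\"odinger and the wave statements follow from this, since the phase is irrelevant after conjugation. We may assume $b-1+\delta\le 0$ and $\delta$ small, since a larger $\delta$ only weakens the right-hand side relative to $T^\delta$.

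For the one-dimensional bound, write $\int_0^t g=\int\widehat g(\tau)\frac{e^{it\tau}-1}{i\tau}\,d\tau$ and split into $|\tau|\le 1/T$ and $|\tau|>1/T$.

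\emph{Low part.} Expand $\frac{e^{it\tau}-1}{i\tau}=\sum_{n\ge1}\frac{i^{n-1}t^n\tau^{n-1}}{n!}$. Use the scaling bound $\|\varphi(t/T)t^n\|_{H^b}\lesssim C^n T^{n+\frac12-b}$, valid for $T<1$ and $b\ge0$. Since $|\tau|^{n-1}\le T^{1-n}$, the low part has $H^b$ norm
\[
\lesssim T^{\frac32-b}\int_{|\tau|\le1/T}|\widehat g(\tau)|\,d\tau .
\]
By Cauchy--Schwarz against the weight $\langle\tau\rangle^{b-1+\delta}$, the integral is at most $\|g\|_{H^{b-1+\delta}}$ times a factor of size $T^{-(\frac32-b-\delta)}$; here we use $1-b-\delta>-\frac12$. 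The product is $T^{\delta}\|g\|_{H^{b-1+\delta}}$.

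\emph{High part, constant term.} The term $-\varphi(t/T)\int_{|\tau|>1/T}\widehat g(\tau)/(i\tau)\,d\tau$ is a constant times $\varphi(t/T)$, whose $H^b$ norm is $\lesssim T^{\frac12-b}$. The constant is bounded by Cauchy--Schwarz by
\[
\|g\|_{H^{b-1+\delta}}\Big(\int_{|\tau|>1/T}|\tau|^{-2b-2\delta}\,d\tau\Big)^{1/2}\sim T^{b+\delta-\frac12}\|g\|_{H^{b-1+\delta}} .
\]
This again gives $T^\delta$ overall.

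\emph{High part, main term.} The remaining term is $\varphi(t/T)h$, where $\widehat h=\mathbf 1_{|\tau|>1/T}\widehat g/(i\tau)$. On $|\tau|>1/T$ we have $\langle\tau\rangle^{b}/|\tau|\lesssim T^{\delta}\langle\tau\rangle^{b-1+\delta}$, so $\|h\|_{H^b}\lesssim T^\delta\|g\|_{H^{b-1+\delta}}$.

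I expect the main obstacle to be the multiplication by $\varphi(t/T)$ on $H^b$ with $b>\frac12$, since its operator norm is not uniformly bounded as $T\to0$ on low frequencies. I would handle it through the splitting $\|\varphi_T h\|_{H^b}\lesssim\|\varphi_T h\|_{L^2}+\|\varphi_T h\|_{\dot H^b}$. For the homogeneous part I would use the fractional Leibniz rule: $\|\varphi_T\|_{L^\infty}\|h\|_{\dot H^b}$ plus $\|\varphi_T\|_{\dot H^b}\|h\|_{L^\infty}$. Since $\widehat h$ vanishes on $|\tau|\le 1/T$, one has $\|h\|_{L^\infty}\le\int_{|\tau|>1/T}|\widehat h|\lesssim T^{b-\frac12}\|h\|_{\dot H^b}$. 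This exactly compensates $\|\varphi_T\|_{\dot H^b}\sim T^{\frac12-b}$. This is the step where the frequency localization $|\tau|>1/T$ is used essentially, and it is the step I would check most carefully.
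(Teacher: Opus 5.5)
Your argument is correct for the range of $\delta$ that matters. It is the standard proof from the literature (Ginibre; Ginibre--Tsutsumi--Velo; Kenig--Ponce--Vega), which the paper only cites and does not reproduce: conjugate out the flow mode by mode, split at $|\tau|=1/T$, Taylor-expand the low part, and handle $\varphi(t/T)$ on the high part via $\|\widehat h\|_{L^1}\lesssim T^{b-\frac12}\|h\|_{\dot H^b}$. The Leibniz step you flagged is sound for $0<b<1$. You can check it through the Gagliardo seminorm. Alternatively, on the Fourier side, use $\langle\tau\rangle^b\lesssim\langle\tau-\sigma\rangle^b+\langle\sigma\rangle^b$ together with Young's inequality and the scale invariance of $\|\widehat{\varphi_T}\|_{L^1}$.

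One sentence is wrong, however. Increasing $\delta$ does not weaken the claim: for $T<1$ the factor $T^\delta$ decreases, so nothing reduces large $\delta$ to small $\delta$. In fact the estimate as literally stated fails for $\delta>\frac32-b$. Take $F=\chi(t)$ in the zero spatial mode, with $\chi\equiv1$ near $0$. Then the left side equals $\|\varphi(t/T)\,t\|_{H^b}\sim T^{\frac32-b}$, while the right side is $T^\delta\|\chi\|_{H^{b-1+\delta}}$. So the lemma should be read with $0<\delta<\frac32-b$, which is all the paper uses since it takes $\delta$ small. Your proof covers this whole range without assuming $b-1+\delta\le0$: the only constraint on $\delta$ comes from the low-frequency Cauchy--Schwarz step, which needs exactly $1-b-\delta>-\frac12$.
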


Applying the trilinear estimate (\ref{eq:high-low-trilinear}), we obtain the following nonlinear smoothing result, which yields a ``$\rho$-gain" in regularity.
\begin{thm}[Separated nonlinear smoothing]
	\label{thm:separated-smoothing}
	Let $d\ge4$ and $s>\frac{3d-5}{6}$. For
	\begin{equation*}
		0<\rho<
		\min\left\{
		\frac12,
		\frac35\left(
		s-\frac{3d-5}{6}
		\right)
		\right\},
	\end{equation*}
	there exists $b>\frac12$ such that, for every interval $I$ with $|I|\le1$,
	\begin{equation*}
		\|\mathcal N_{\mathrm{sep}}^S\|_{X_S^{s+\rho,b}(I)}
		\lesssim
		\|u\|_{X_S^{s,b}(I)}
		\sum_\pm
		\|w_\pm\|_{X_{W_\pm}^{s-\frac12,b}(I)},
	\end{equation*}
	and
	\begin{equation*}
		\|\mathcal N_{\mathrm{sep}}^{W,\pm}\|_
		{X_{W_\pm}^{s-\frac12+\rho,b}(I)}
		\lesssim
		\|u\|_{X_S^{s,b}(I)}^2.
	\end{equation*}
	In particular,
	\begin{equation*}
		\mathcal N_{\mathrm{sep}}^S
		\in
		C_tH_x^{s+\rho}(I),
		\qquad
		\mathcal N_{\mathrm{sep}}^{W,\pm}
		\in
		C_tH_x^{s-\frac12+\rho}(I).
	\end{equation*}
\end{thm}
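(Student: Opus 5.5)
The plan is to combine Lemma \ref{Gini} with a duality argument, reducing both estimates to the high--low trilinear bound \eqref{eq:high-low-trilinear} of Lemma \ref{lem:high-low-trilinear}, and then to sum dyadically. We take $b=\frac12+\delta_0$ and use Lemma \ref{Gini} with loss $\delta=\delta_0$. This reduces the Schr\"odinger part to
\begin{equation*}
\Big\|\sum_{N_2\ll N_1}(P_{N_1}w_\pm)(P_{N_2}u)\Big\|_{X_S^{s+\rho,-\frac12+2\delta_0}}\lesssim \|u\|_{X_S^{s,b}}\|w_\pm\|_{X_{W_\pm}^{s-\frac12,b}}
\end{equation*}
on extensions of the functions on $I$. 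It reduces the wave part to the analogous bound for $|D|\,(P_{N_1}u)\overline{P_{N_2}u}$ in $X_{W_\pm}^{s-\frac12+\rho,-\frac12+2\delta_0}$, with symmetric handling of the conjugated term. By duality, we pair with a test function in $X^{-(s+\rho),\frac12-2\delta_0}$ (respectively $X^{-(s-\frac12+\rho),\frac12-2\delta_0}$), whose relevant spatial frequency is $\sim N_1$ since $N_2\ll N_1$. We then decompose every factor into pieces $P_{N,L}$. After complex conjugation if necessary, each dyadic piece becomes exactly an integral $\iint u_1\overline{v_2}w_3^\pm$ with $u_1$ at frequency $N_1$, $v_2$ at $N_2$ and $w_3$ at $N_1$, as in Lemma \ref{lem:high-low-trilinear}.

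The key extra input is nonresonance. On the Fourier support we have $k_1-k_2+k_3=0$, and the modulations add up to
\begin{equation*}
|k_1|^2-|k_2|^2\pm|k_3|=|k_1|^2+O(N_2^2)+O(N_1)\sim N_1^2 .
\end{equation*}
Hence $L_{\max}:=\max\{L_1,L_2,L_3\}\gtrsim N_1^2$, and this is exactly why the separated regime avoids $\Gamma^\pm_{res}$. Using this, the factor $(L_1L_2L_3)^{\theta/2}L_{\min}^{(1-\theta)/2}$ is controlled by $L_1^{1/2-2\delta_0}L_2^{b}L_3^{b}$ times $L_{\max}^{-\gamma}$ with $\gamma\approx\frac12-\theta-O(\delta_0)$. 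This holds because the total $L$-power on the right is about $\frac32$, while the trilinear estimate uses only $\frac12+\theta$, and each individual exponent stays below $\frac12$. The factor $L_{\max}^{-\gamma}$ then yields a gain $N_1^{-1+2\theta+O(\delta_0)}$.

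Collecting the frequency weights in the Schr\"odinger case gives
\begin{equation*}
N_1^{\rho+\frac12}\,N_1^{-1+2\theta+O(\delta_0)}\,N_2^{-s}\,N_2^{\frac d2-\frac{7\theta}4+\epsilon}\Big(1+\tfrac{N_2^2}{N_1}\Big)^{\theta/4}.
\end{equation*}
The wave case produces the same expression up to bookkeeping, since there $|D|$ contributes $N_1$ and the weights are $N_1^{s-\frac12+\rho}$ against $N_1^{-s}N_2^{-s}$. For summability over $N_2\ll N_1$ and over $N_1$, we split into the regimes $N_1\ge N_2^2$ and $N_1\le N_2^2$. In the worst configuration, which is essentially $N_1\sim N_2^{2}$ or the corresponding boundary of the $\theta$-optimization, we need the total exponent to be negative for some admissible $\theta<1$. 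The $N_1$-decay then lets us trade a power of $N_1$ for powers of $N_2$. We choose $\theta$ (close to the value balancing $2\theta$ against $\frac{7\theta}{4}$ and $\frac{\theta}{4}$) so that the condition reads $\rho<\frac35\big(s-\frac{3d-5}6\big)$. Any residual positive power is absorbed by the strict inequality together with choosing $\epsilon,\delta_0$ small. The constraint $\rho<\frac12$ comes from requiring $\gamma>0$ after spending the $N_1$ gain. Once the dyadic pieces sum geometrically (with Cauchy--Schwarz in $N_1$ for the $\ell^2$ structure of the $X^{s,b}$ norms), the continuity statements follow from $b>\frac12$ and the embedding into $C_tH^s_x$.

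The main obstacle is this exponent bookkeeping. We must choose $\theta$ and distribute the modulation powers so that the nonresonant gain in $L_{\max}$, the shell-improved coefficient, and the crude bound \eqref{eq:crude-trilinear} combine to give exactly the threshold $\frac{3d-5}{6}$ and the factor $\frac35$ uniformly in both regimes of $N_2^2/N_1$. If the single interpolation parameter does not suffice, I would first try using different $\theta$ in the two regimes $N_1\gtrless N_2^2$, and separately treat the subcase where $L_{\max}$ falls on the test function.
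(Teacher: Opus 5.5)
There is a genuine gap, and it sits in the exponent bookkeeping you flagged as the main obstacle. Your architecture is the paper's: Lemma \ref{Gini}, duality against a test function at frequency $\sim N_1$, Lemma \ref{lem:high-low-trilinear}, the nonresonance bound $L_{\max}\gtrsim N_1^2$, and a choice of $\theta$ depending on the regime. The problem is that the nonresonant gain is miscomputed. Note that
\[
L_{\min}^{\frac{1-\theta}{2}}(L_1L_2L_3)^{\frac{\theta}{2}}
=L_{\min}^{\frac12}L_{\mathrm{mid}}^{\frac{\theta}{2}}L_{\max}^{\frac{\theta}{2}}.
\]
After dividing by the weights $L_1^{\frac12-2\delta_0}L_2^{b}L_3^{b}$, you are left, up to factors $L_j^{O(\delta_0)}$, with $L_{\mathrm{mid}}^{-\frac{1-\theta}{2}}L_{\max}^{-\frac{1-\theta}{2}}$. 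So $\gamma=\frac{1-\theta}{2}-O(\delta_0)$, which is the paper's $q$, not $\frac12-\theta$. The gain is therefore $N_1^{-1+\theta}$, not $N_1^{-1+2\theta}$. Nothing has to be held back on $L_{\max}$ for the dyadic summation: Cauchy--Schwarz over $L_{\max}\ge N_1^2$ of $L_{\max}^{-2\gamma}$ already returns $N_1^{-2\gamma}$, and $L_{\mathrm{mid}}$ keeps its own decay.

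This error is not cosmetic. With your coefficient, in the configuration $N_1\sim N_2=N$ the total power is $N^{\frac d2-s-\frac12+\rho+\frac{\theta}{2}}$. That exponent increases with $\theta$, so interpolating toward the shell-improved bound only hurts. That configuration alone forces $\theta=0$ and $s>\frac{d-1}{2}+\rho$. This misses the whole range $\frac{3d-5}{6}<s\le\frac{d-1}{2}$ claimed in the theorem, and no choice of $\theta$ in your expression can produce the factor $\frac35$.

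With the correct gain, both nonlinearities lead to the dyadic coefficient $N_1^{-\frac12+\rho+\theta}N_2^{\frac d2-s-\frac{7\theta}{4}}\left(1+\frac{N_2^2}{N_1}\right)^{\theta/4}$, up to $N_1^{O(\delta_0)}N_2^{\epsilon}$. Then proceed by regime:
\begin{itemize}
\item For $N_1\ge N_2^2$, take $\max\{0,\frac47(\frac d2-s)\}<\theta<\frac12-\rho$. Both powers become negative.
\item For $N_2\ll N_1\le N_2^2$, the coefficient becomes $N_1^{-\frac12+\rho+\frac{3\theta}{4}}N_2^{\frac d2-s-\frac{5\theta}{4}}$. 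Take $\max\{0,\frac45(\frac d2-s)\}<\theta<\frac23-\frac43\rho$. This interval is nonempty exactly when $\rho<\frac35\left(s-\frac{3d-5}{6}\right)$. In that case the first interval is nonempty too, since $\frac47<\frac35$.
\end{itemize}
So the second regime needs $\theta$ close to $\frac23$, which your bookkeeping would never allow. Also, the condition $\rho<\frac12$ comes from needing the $N_1$-power $-\frac12+\rho+\theta$ to be negative for some admissible $\theta\ge 0$, not from $\gamma>0$. With this correction, the rest of your plan goes through as in the paper: the summation in $N_1,N_2$, Lemma \ref{Gini}, and the embedding into $C_tH^s_x$.
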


\begin{proof}
	Take $b=\frac12+\beta$ with $\beta>0$ sufficiently small. For the interpolation parameter $\theta$ in Lemma \ref{lem:high-low-trilinear}, set
	\begin{equation*}
		q=\frac{1-\theta}{2}.
	\end{equation*}
	We also take $\delta$ sufficiently small so that $q>\beta+\delta$.
	
	Recall that under our ``high--low" restriction, the spatial frequencies satisfy
	\begin{equation*}
		|k_1|\sim|k_3|\sim N_1,
		\qquad
		|k_2|\sim N_2,
		\qquad
		N_2\ll N_1.
	\end{equation*}
	Hence,
	\begin{equation*}
		\left|
		|k_1|^2-|k_2|^2\pm|k_3|
		\right|
		\gtrsim N_1^2.
	\end{equation*}
	On $\Gamma$ (see (\ref{eq:fourier-conservation})), it follows that
	\begin{equation*}
		L_{\max}
		:=
		\max\{L_1,L_2,L_3\}
		\gtrsim |(\tau_1+|k_1|^2)-(\tau_2+|k_2|^2)+(\tau_3\pm|k_3|)|
	\end{equation*}
	\begin{equation}\label{eq:max-modulation}
		=\left|
		|k_1|^2-|k_2|^2\pm|k_3|
		\right|
		\gtrsim N_1^2.
	\end{equation}
	
	We now apply Lemma \ref{Gini} and sum over the modulations. We first consider the
	Schr\"odinger nonlinearity. By duality, it suffices to test
	\[
	\big[(P_{N_1}w^\pm)(P_{N_2}u)\big]
	\]
	against a function $h$ with
	\[
	\|h\|_{X_S^{0,1-b-\delta}}=1.
	\]
	Since the output $\big[(P_{N_1}w^\pm)(P_{N_2}u)\big]$ has frequency $\sim N_1$, we may assume that
	$h=P_{N_1}h$. Decomposing the three factors into dyadic modulations,
	the trilinear form is bounded, by Lemma \ref{lem:high-low-trilinear}, by
	\begin{equation*}
		\begin{split}
			&N_2^{\frac d2-\frac{7\theta}{4}+\epsilon}
			\left(1+\frac{N_2^2}{N_1}\right)^{\theta/4}
			\sum_{L_1,L_2,L_3}
			L_{\min}^{q}(L_1L_2L_3)^{\theta/2}
			\\
			&\qquad\qquad\times
			\|Q^S_{L_1}h\|_{L^2}
			\|Q^S_{L_2}P_{N_2}u\|_{L^2}
			\|Q^{W_\pm}_{L_3}P_{N_1}w^\pm\|_{L^2}.
		\end{split}
	\end{equation*}
	After inserting the Bourgain-space weights, the modulation factor to be
	summed is
	\begin{equation*}
		L_{\min}^{q}
		L_1^{-q+\beta+\delta}
		L_2^{-q-\beta}
		L_3^{-q-\beta}.
	\end{equation*}
	
	If $L_1=L_{\max}$, then
	\begin{equation*}
		L_1^{-q+\beta+\delta}
		\lesssim
		N_1^{-2q+2\beta+2\delta}
		=
		N_1^{-1+\theta+2\beta+2\delta}.
	\end{equation*}
	The remaining dyadic sums converge since $q>\beta+\delta$. For instance,
	in the subregion $L_2\leq L_3\leq L_1$, the modulation factor becomes
	\begin{equation*}
		L_1^{-q+\beta+\delta}L_2^{-\beta}L_3^{-q-\beta},
	\end{equation*}
	which is summable by Cauchy--Schwarz.
	
	If either $L_2$ or $L_3$ is maximal, the maximal modulation instead carries
	the stronger factor
	\begin{equation*}
		L_{\max}^{-q-\beta}
		\lesssim
		N_1^{-2q-2\beta}
		=
		N_1^{-1+\theta-2\beta}.
	\end{equation*}
	Similarly, all the remaining powers are summable because
	$q>\beta+\delta$. Therefore, the worst contribution arises when
	$L_1=L_{\max}$. Taking the supremum over $h$ gives
	\begin{equation}\label{5.23}
		\left\|
		(P_{N_1}w^\pm)(P_{N_2}u)
		\right\|_{X_S^{0,b-1+\delta}}
		\lesssim_\epsilon
		N_1^{-1+\theta+2\beta+2\delta}
		N_2^{\frac d2-\frac{7\theta}{4}+\epsilon}
		\left(1+\frac{N_2^2}{N_1}\right)^{\theta/4}
		\|P_{N_1}w^\pm\|_{X_{W_\pm}^{0,b}}
		\|P_{N_2}u\|_{X_S^{0,b}}.
	\end{equation}
	
	The same duality argument, now testing against a wave function in
	$X_{W_\pm}^{0,1-b-\delta}$, gives
	\begin{equation}\label{5.24}
		\left\|
		(P_{N_1}u)(P_{N_2}u)
		\right\|_{X_{W_\pm}^{0,b-1+\delta}}
		\lesssim_\epsilon
		N_1^{-1+\theta+2\beta+2\delta}
		N_2^{\frac d2-\frac{7\theta}{4}+\epsilon}
		\left(1+\frac{N_2^2}{N_1}\right)^{\theta/4}
		\|P_{N_1}u\|_{X_S^{0,b}}
		\|P_{N_2}u\|_{X_S^{0,b}}.
	\end{equation}
	
	We next transfer the spatial Sobolev weights. Multiplying
	\eqref{5.23} by the output weight $N_1^{s+\rho}$ and extracting the input
	weights $N_1^{s-\frac12}$ and $N_2^s$, we are left with the dyadic coefficient
	\begin{equation}\label{5.25}
		N_1^{-\frac12+\rho+\theta+2\beta+2\delta}
		N_2^{\frac d2-s-\frac{7\theta}{4}+\epsilon}
		\left(1+\frac{N_2^2}{N_1}\right)^{\theta/4}.
	\end{equation}
	More precisely,
	\begin{equation*}
		N_1^{s+\rho}
		N_1^{-1+\theta+2\beta+2\delta}
		N_1^{-s+\frac12}
		=
		N_1^{-\frac12+\rho+\theta+2\beta+2\delta},
	\end{equation*}
	while the low-frequency input contributes the factor $N_2^{-s}$.
	
	For the wave nonlinearity, the Duhamel term contains one derivative $|D|$,
	which contributes a factor $N_1$. The wave output carries the weight
	$N_1^{s-\frac12+\rho}$, while the two Schr\"odinger inputs carry the weights
	$N_1^s$ and $N_2^s$. Hence,
	\begin{equation*}
		N_1^{s-\frac12+\rho}
		N_1
		N_1^{-1+\theta+2\beta+2\delta}
		N_1^{-s}
		=
		N_1^{-\frac12+\rho+\theta+2\beta+2\delta},
	\end{equation*}
	so the same coefficient \eqref{5.25} appears. Thus, both nonlinearities
	are reduced to the same dyadic summation.
	
	We first consider the case $N_1\ge N_2^2$. Then \eqref{5.25} is bounded by
	\begin{equation*}
		N_1^{-\frac12+\rho+\theta+2\beta+2\delta}
		N_2^{\frac d2-s-\frac{7\theta}{4}+\epsilon}.
	\end{equation*}
	Choose
	\begin{equation}\label{5.26}
		\max\left\{0,\frac47\left(\frac d2-s\right)\right\}
		<\theta<\frac12-\rho.
	\end{equation}
	After fixing $\theta$, we take $\beta,\delta,\epsilon>0$ sufficiently
	small. Then both powers of $N_1$ and $N_2$ are strictly negative.
	
	We next consider $N_2\ll N_1\le N_2^2$. In this case, \eqref{5.25} is bounded by
	\begin{equation*}
		N_1^{-\frac12+\rho+\frac{3\theta}{4}+2\beta+2\delta}
		N_2^{\frac d2-s-\frac{5\theta}{4}+\epsilon}.
	\end{equation*}
	We choose
	\begin{equation}\label{5.27}
		\max\left\{0,\frac45\left(\frac d2-s\right)\right\}
		<\theta<\frac23-\frac{4\rho}{3}.
	\end{equation}
	Taking $\beta,\delta,\epsilon>0$ sufficiently small again makes both
	frequency powers strictly negative.
	
	Consequently, in both cases, the dyadic coefficient is summable in
	$N_1$ and $N_2$, which yields
	\begin{equation*}
		\left\|
		\sum_{N_1\geq 2}\sum_{N_2\ll N_1}
		\big[(P_{N_1}w^\pm)(P_{N_2}u)\big]
		\right\|_{X_S^{s+\rho,b-1+\delta}}
		\lesssim
		\|w^\pm\|_{X_{W_\pm}^{s-\frac12,b}}
		\|u\|_{X_S^{s,b}},
	\end{equation*}
	and
	\begin{equation*}
		\left\|
		\sum_{N_1\geq 2}\sum_{N_2\ll N_1}
		|D|
		\big[
		(P_{N_1}u)(P_{N_2}u)
		+
		(P_{N_2}u)(P_{N_1}u)
		\big]
		\right\|_{X_{W_\pm}^{s-\frac12+\rho,b-1+\delta}}
		\lesssim
		\|u\|_{X_S^{s,b}}^2.
	\end{equation*}
	
	Since $|I|\leq 1$, the standard Duhamel estimates in Lemma \ref{Gini} imply
	\begin{equation*}
		\|\mathcal N^S_{\mathrm{sep}}\|_{X_S^{s+\rho,b}(I)}
		\lesssim
		\|u\|_{X_S^{s,b}(I)}
		\sum_\pm
		\|w^\pm\|_{X_{W_\pm}^{s-\frac12,b}(I)},
	\end{equation*}
	and
	\begin{equation*}
		\|\mathcal N^{W,\pm}_{\mathrm{sep}}\|_{X_{W_\pm}^{s-\frac12+\rho,b}(I)}
		\lesssim
		\|u\|_{X_S^{s,b}(I)}^2.
	\end{equation*}
	
\end{proof}
	
	\newpage
\appendix
\appendix
\section{General position}

In this appendix, we consider $\tau_2$ in general position and complete
the proof of Conjecture~\ref{8211}. We keep the notation
\begin{equation*}
	\delta:=\frac{1}{N_1},
	\qquad
	v:=\frac{N_2}{N_1}=N_2\delta.
\end{equation*}
As in Section~3, the short direction of all the plates is taken to be
the $e_d$-direction.

Before turning to the general-position geometry, we first recall the
localized version of the shell-type linear decoupling estimate \cite[Corollary 3.3]{kinoshita2023decoupling}.

\begin{thm}\label{variant}
	Let $R\gg 1$, $d\ge 2$, and $R^{1/2}\ge Q\ge 1$. Assume that
	$f:\R^d\to\C$ is supported on
	\begin{equation*}
		\operatorname{supp}f
		\subseteq
		\left\{
		\xi\in\R^d:
		1-\frac{1}{R^{1/2}}
		\le |\xi|
		\le
		1+\frac{1}{R^{1/2}}
		\right\}
		\cap B_{QR^{-1/2}},
	\end{equation*}
	where $B_{QR^{-1/2}}\subseteq\R^d$ is a ball of radius
	$QR^{-1/2}$ with arbitrary center. Then, for $2\le p\le\infty$ and any $\epsilon>0$, we have 
	\begin{equation*}
		\|E_{\mathbb P^d}f\|_{L^p(B_R^{d+1})}
		\lesssim_{\epsilon} Q^{\epsilon}
		C_d'(Q)
		\left(
		\sum_{|\theta|=R^{-1/2}}
		\|E_{\mathbb P^d}f_\theta\|_{L^p(B_R^{d+1})}^2
		\right)^{1/2},
	\end{equation*}
	where
	\begin{equation*}
		C_d'(Q)
		:=
		\begin{cases}
			Q^{\frac{d-1}{2}-\frac{d+1}{p}},
			&
			\dfrac{2(d+1)}{d-1}\le p\le\infty,
			\\[8pt]
			1,
			&
			2\le p<\dfrac{2(d+1)}{d-1}.
		\end{cases}
	\end{equation*}
\end{thm}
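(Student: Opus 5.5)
The plan is to reduce Theorem \ref{variant} to the linear shell-type decoupling estimate (\ref{linear}) at a coarser scale, using parabolic rescaling. Write $\delta=R^{-1/2}$, and let $B=B(\xi_0,Q\delta)$ be the ball. If $|\xi_0|$ is far from $1$ the support is empty, so we may take $|\xi_0|\approx 1$, and after a rotation $\xi_0\approx e_d$.

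\textbf{Step 1: rescaling.} Consider the affine parabolic rescaling adapted to the cap $B$: translate $\xi_0$ to the origin by a Galilean transformation and dilate by $(Q\delta)^{-1}$. Under $\xi=\xi_0+Q\delta\,\eta$ with $|\eta|\lesssim 1$, the shell condition $\bigl||\xi|^2-1\bigr|\lesssim\delta$ becomes
\[
\bigl|2\xi_0\cdot\eta+Q\delta|\eta|^2 + (|\xi_0|^2-1)(Q\delta)^{-1}\bigr|\lesssim Q^{-1}.
\]
So the rescaled support lies in the $Q^{-1}$-neighborhood of a sphere-like quadric of curvature radius $\sim (Q\delta)^{-1}$. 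The caps $\theta$ of size $\delta$ become caps of size $Q^{-1}$. The ball $B_R$ becomes, in the rescaled variables, a region containing balls of radius $Q^2$, which is exactly the scale $R'=Q^2$ at which $Q^{-1}=R'^{-1/2}$.

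\textbf{Step 2: apply the unlocalized estimate.} On each $Q^2$-ball, apply (\ref{linear}) in dimension $d$ at scale $R'=Q^2$. This is legitimate because the rescaled support is a $R'^{-1/2}$-thick shell piece, a large sphere intersected with a unit ball. This step is the main obstacle. The curvature of the rescaled sphere is tiny, so the shell piece is close to a $Q^{-1}$-slab around a hyperplane, and we need (\ref{linear}) uniformly for such flatter hypersurfaces. I would handle it in one of two ways. The first is to note that the proof in \cite{kinoshita2023decoupling} only uses the codimension-one structure: the shell slab is a $(d-1)$-dimensional parabolic cylinder neighborhood. The second is to observe that the $Q^{-1}$-neighborhood of a slightly curved unit piece of shell is covered by $O(1)$ pieces of a $Q^{-1}$-neighborhood of a hyperplane section. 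In that case Lemma \ref{lifting}, combined with Bourgain--Demeter (\ref{BD}) in dimension $d-1$, gives the constant
\[
(Q^2)^{\frac{d-1}{4}-\frac{d+1}{2p}}=Q^{\frac{d-1}{2}-\frac{d+1}{p}}
\]
with a $Q^\epsilon$ loss for $p\ge 2(d+1)/(d-1)$, and $Q^\epsilon$ below that exponent. This matches $C_d'(Q)$.

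\textbf{Step 3: undo the rescaling.} Sum the $Q^2$-ball estimates over a finitely overlapping cover of the rescaled image of $B_R$, using Minkowski's inequality for $p\ge2$ to pass the $\ell^2$ sum outside. Then undo the change of variables. The Jacobians are identical on both sides, so the constant is preserved. Replacing sharp balls by weights via (\ref{weight}) absorbs the tails and finishes the proof.
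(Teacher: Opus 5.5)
The paper gives no proof of this statement; it cites \cite[Corollary 3.3]{kinoshita2023decoupling}. Your attempt has a genuine gap in Step~2. Steps 1 and 3 are fine. Step~2 is also correct when $Q\lesssim R^{1/4}$: with $\delta=R^{-1/2}$, the quadratic term $Q\delta|\eta|^2$ in your rescaled shell condition is then $\lesssim Q^{-1}$, so the support lies in a single $Q^{-1}$-slab, and Lemma~\ref{lifting} with (\ref{BD}) in dimension $d-1$ at scale $Q^2$ gives exactly $C_d'(Q)$.

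The gap is the range $R^{1/4}\ll Q\ll R^{1/2}$, where the curvature term has size $Q\delta=QR^{-1/2}\gg Q^{-1}$. Your second justification is false there. The rescaled piece is not covered by $O(1)$ flat $Q^{-1}$-slabs; it needs about $(Q^2R^{-1/2})^{(d-1)/2}$ of them. Recombining these trivially (the $L^p$ analogue of (\ref{L14-sum})) costs a power of $R$: at $Q=R^{1/2}$ and $p\ge\frac{2(d+1)}{d-1}$ it gives $R^{\frac{d-1}{4}-\frac{d}{2p}}$ instead of $R^{\frac{d-1}{4}-\frac{d+1}{2p}}$.

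Your first justification does not close the gap either. The dimensionless quantity that matters is the sagitta $(QR^{-1/2})^2$ of the cap divided by the shell thickness $R^{-1/2}$, namely $Q^2R^{-1/2}$. Translations, rotations and dilations, i.e.\ the symmetries of the paraboloid, all preserve it. After your rescaling, (\ref{linear}) at scale $Q^2$ is the case where this ratio equals $Q$, and your flat case is ratio $\lesssim 1$. The intermediate ratios $1\ll Q^2R^{-1/2}\ll Q$ are exactly the content of the theorem. Saying that the KNS proof ``only uses codimension-one structure'' restates what has to be proved rather than proving it.

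What is missing is a genuine decoupling step for the curved part. Within your framework, for $Q>R^{1/4}$, first decouple $f$ into caps $\kappa$ of radius $\delta^{1/2}=R^{-1/4}$. For each fixed $t$, the function $x\mapsto Ef(x,t)$ has Fourier support in the $\delta$-neighborhood of $S^{d-1}\subset\R^d$ inside a $Q\delta$-cap. Bourgain--Demeter decoupling for the sphere in $\R^d$ (after rescaling that cap, at effective scale $Q^2\delta$), applied on $\delta^{-1}$-balls and summed with Minkowski, gives for $p\ge\frac{2(d+1)}{d-1}$
\[
\|Ef\|_{L^p(B_R)}\lesssim_\epsilon (Q^2\delta)^{\frac{d-1}{4}-\frac{d+1}{2p}+\epsilon}\Bigl(\sum_\kappa\|Ef_\kappa\|_{L^p(w_{B_R})}^2\Bigr)^{1/2}.
\]
Each $\kappa$ is then in your flat regime with $Q_1=\delta^{-1/2}$, at cost $\delta^{-(\frac{d-1}{4}-\frac{d+1}{2p})-\epsilon}$. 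The product of the two constants is $Q^{\frac{d-1}{2}-\frac{d+1}{p}}$ up to $Q^{O(\epsilon)}$, using $\delta^{-1}<Q^2$ in this range. Below the critical exponent both constants are $\lesssim Q^{O(\epsilon)}$.

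An alternative route uses the identity $|\xi|^2=2|\xi|-1+(|\xi|-1)^2$, which places the lifted shell in the $R^{-1}$-neighborhood of the cone $\tau=2|\xi|-1$. A Lorentz rescaling maps a sector of aperture $QR^{-1/2}$ to the full cone with thickness $Q^{-2}$, and cone decoupling then yields $C_d'(Q)$ directly. Lorentz rescaling is exactly the symmetry the paraboloid lacks and that your Step~2 implicitly needs.
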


\begin{rem}
	The main point of Theorem~\ref{variant} is that, after localizing the
	frequency support to a ball of radius $QR^{-1/2}$, the decoupling
	constant is determined by the local parameter $Q$, rather than by
	the ambient scale $R^{1/2}$. In particular, the estimate is uniform
	in $R$ and in the center of the frequency ball.
\end{rem}

Recall that we write
\begin{equation*}
	D_\delta(f)
	:=
	\left(
	\sum_{|\theta|\sim\delta}
	\|Ef_\theta\|_{L^4(B_{\delta^{-2}})}^2
	\right)^{1/2}.
\end{equation*}
and will repeatedly use the elementary inequality
\begin{equation}\label{A-recombination}
	\left\|
	\sum_{\nu=1}^m G_\nu
	\right\|_{L^4}
	\lesssim
	m^{1/4}
	\left(
	\sum_{\nu=1}^m
	\|G_\nu\|_{L^4}^2
	\right)^{1/2},
\end{equation}
whenever the Fourier supports of the $G_\nu$'s have bounded overlap. This follows by interpolating the $L^2$-orthogonality estimate with the $L^\infty$ triangle inequality.

\subsection{Geometry of a general-position plate}

We first describe precisely the parameters shown in Figure
\ref{general position}. By symmetry, it is enough to consider the
upper half of the shell. Let $\tau_2$ be a $(v,\delta)$-plate whose
short direction is parallel to $e_d$. We write the $d$-th coordinate
of its center as
\begin{equation*}
	v-t,
\end{equation*}
where
\begin{equation*}
	\delta\lesssim t\le v.
\end{equation*}
Thus, up to an $O(\delta)$ ambiguity coming from the thickness of the
plate, $t$ measures the distance from the polar point $ve_d$ in the
short direction.

\begin{figure}[t]
	\centering
	\includegraphics[width=0.7\linewidth]{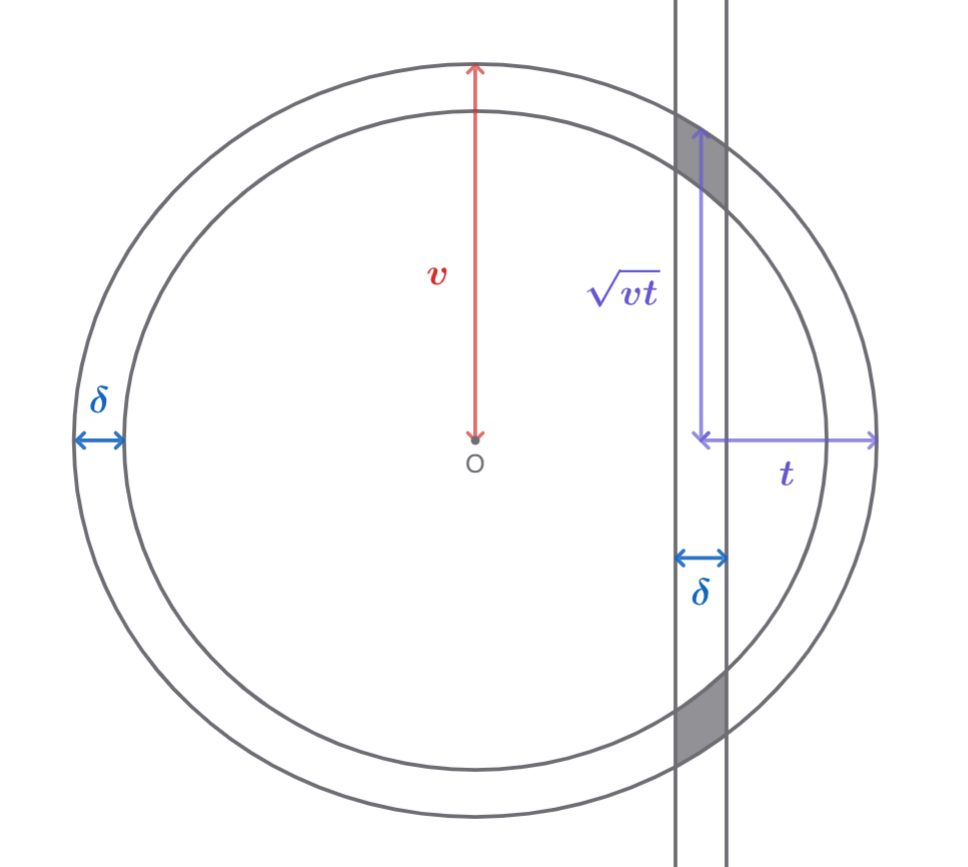}
	\caption{General position. The parameter $t$ measures the
		displacement from the polar position in the short direction, and
		the projected radius is comparable to $\sqrt{vt}$.}
	\label{general position}
\end{figure}

Suppose that
\begin{equation*}
	\operatorname{supp}f_2
	\subseteq
	\left\{
	\xi\in\R^d:
	\big||\xi|-v\big|\lesssim\delta
	\right\}
	\cap\tau_2.
\end{equation*}
Writing $\xi=(\xi',\xi_d)$, we have
\begin{equation*}
	|\xi|^2=v^2+O(v\delta),
\end{equation*}
and
\begin{equation*}
	\xi_d=v-t+O(\delta).
\end{equation*}
Consequently,
\begin{equation}\label{general-position-projection}
	|\xi'|^2
	=
	v^2-(v-t)^2+O(v\delta)
	=
	2vt-t^2+O(v\delta).
\end{equation}
Since $0<t\le v$,
\begin{equation*}
	(2vt-t^2)^{1/2}\sim\sqrt{vt}.
\end{equation*}
Moreover, since $t\gtrsim\delta$, equation
\eqref{general-position-projection} implies
\begin{equation}\label{general-position-shell}
	\operatorname{supp}\Pi_d f_2
	\subseteq
	\left\{
	\xi'\in\R^{d-1}:
	\left|
	|\xi'|-\sqrt{vt}
	\right|
	\lesssim
	\sqrt{\frac vt}\,\delta
	\right\},
\end{equation}
where $\Pi_d$ denotes the projection onto the first $d-1$
coordinates. Notice that all quantities appearing in
\eqref{general-position-shell} agree with the parameters indicated in
Figure~\ref{general position}.

We now set
\begin{equation}\label{general-KL}
	K:=N_1\sqrt{vt},
	\qquad
	L:=\sqrt{\frac vt}.
\end{equation}
Thus, the projected radius and radial thickness in
\eqref{general-position-shell} are comparable to $K\delta$ and
$L\delta$, respectively. Furthermore,
\begin{equation}\label{general-KL-relations}
	KL=N_2,
	\qquad
	\frac KL=N_1t.
\end{equation}
Since $\delta\lesssim t\le v$, we have
\begin{equation}\label{general-KL-range}
	1\lesssim L\le K\lesssim N_2.
\end{equation}

We next record the linear estimate associated with this geometry.

\begin{lemma}\label{general slice linear}
	Let $d\ge4$, and suppose that $g$ is supported in a
	$\delta$-thin general-position plate as above. Then
	\begin{equation}\label{general-slice-L4}
		\|Eg\|_{L^4(B_{\delta^{-2}}^{d+1})}
		\lesssim_\epsilon
		N_2^\epsilon
		L^{1/4}
		K^{\frac{d-4}{4}}
		D_\delta(g).
	\end{equation}
\end{lemma}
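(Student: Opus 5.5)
The plan is to mirror the argument that produced \eqref{general-slice-linear} in the polar case, now with the parameters $K,L$ of \eqref{general-KL}. The key observation is that $g$ lives in a $\delta$-thin slab in the $\xi_d$-direction. Namely, $|\xi_d-(v-t)|\lesssim\delta$, so Lemma~\ref{lifting} applies with $a=v-t$ and $\Omega'=\operatorname{supp}\Pi_d g$. It therefore suffices to prove, for every $h$ on $\R^{d-1}$ supported in the projected annulus \eqref{general-position-shell}, the $(d-1)$-dimensional estimate
\[
\|E_{\mathbb P^{d-1}}h\|_{L^4(\omega_{B^{d}_{\delta^{-2}}})}\lesssim_\epsilon N_2^{\epsilon}L^{1/4}K^{\frac{d-4}{4}}\Big(\sum_{|\theta'|\sim\delta}\|E_{\mathbb P^{d-1}}h_{\theta'}\|_{L^4(\omega_{B^{d}_{\delta^{-2}}})}^2\Big)^{1/2}.
\]
The lifting lemma then transfers this with the same constant $A=N_2^\epsilon L^{1/4}K^{(d-4)/4}$. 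Passing between weighted and unweighted norms on $B_{\delta^{-2}}$ is the standard step via \eqref{weight}.

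To prove the $(d-1)$-dimensional estimate, I would split the annulus $\{||\xi'|-K\delta|\lesssim L\delta\}$ into $O(L)$ concentric radial shells of thickness $\delta$, writing $h=\sum_{\nu=1}^{O(L)}h_\nu$. These pieces have bounded-overlap Fourier supports, so \eqref{A-recombination} gives
\[
\|Eh\|_4\lesssim L^{1/4}\Big(\sum_\nu\|Eh_\nu\|_4^2\Big)^{1/2}.
\]
Each $h_\nu$ is supported in a $\delta$-shell of radius $r_\nu\sim K\delta$ in $\R^{d-1}$. We have $d-1\ge3$, so Lemma~\ref{localized-shell} (equivalently Theorem~\ref{variant} with $p=4$, after rescaling) is available. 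I would rescale the radius $r_\nu$ to $1$; the thickness then becomes $\delta/r_\nu\sim K^{-1}$ and the cap size $K^{-1}$. Applying the shell-type estimate \eqref{linear} in $m=d-1$ frequency dimensions at scale $R=K^2$ gives the constant $K^{\frac{d-4}{4}+\epsilon}$, and $K\lesssim N_2$ lets the $\epsilon$-loss be absorbed into $N_2^\epsilon$. Finally, the caps $\theta'$ refining the pieces $h_\nu$ are disjoint across $\nu$, so summing the squared decoupled norms over $\nu$ reproduces the full square function of $h$.

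I expect the main obstacle to be the rescaling step. After the parabolic rescaling $\xi'\mapsto\xi'/r_\nu$, the spatial ball $B_{\delta^{-2}}$ becomes a region of spatial size $\delta^{-2}r_\nu\sim K\delta^{-1}$ and temporal size $\delta^{-2}r_\nu^2\sim K^2$. This is larger than the natural $K^2$-ball in space (anisotropically), although it matches in time. I would handle this by covering the rescaled region by $K^2$-balls, applying the decoupling on each, and summing in $\ell^4$, using Minkowski to pass the $\ell^2$ sum outside; this is legitimate since $4\ge2$. Equivalently, one can appeal to the translation invariance and uniformity in the center stated for Theorem~\ref{variant}. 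I also need to check that the conditions $1\lesssim L\le K$ from \eqref{general-KL-range} keep every piece within the valid range of these lemmas. In the edge cases $K\lesssim1$ or $L\sim K$, the trivial $\ell^2$ bound or the Bourgain--Demeter estimate (as in \eqref{polar-low-linear}) covers the cap of radius $K\delta$ and gives a constant no worse than the claimed one.
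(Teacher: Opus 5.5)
Your proposal is correct and follows the same route as the paper. You lift via Lemma~\ref{lifting}, split the projected annulus into $O(L)$ radial $\delta$-shells, apply the $(d-1)$-dimensional shell-type decoupling with constant $K^{\frac{d-4}{4}+\epsilon}$ to each shell, and recombine with \eqref{A-recombination} at a cost of $L^{1/4}$. Your explicit parabolic rescaling to a unit shell at scale $R=K^2$, together with covering the anisotropic rescaled region by $K^2$-balls, is a useful clarification: the paper appeals briefly to Theorem~\ref{variant}, which is stated for the unit shell rather than a shell of radius $K\delta$.
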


\begin{proof}
	By Lemma~\ref{lifting}, it suffices to estimate the projection
	of the support onto $\R^{d-1}$. By
	\eqref{general-position-shell}, this projection is contained in a
	radial shell of radius comparable to $K\delta$ and radial thickness
	comparable to $L\delta$.
	
	Decompose this region into $O(L)$ radial shells of thickness
	comparable to $\delta$:
	\begin{equation*}
		g=\sum_\alpha g_\alpha.
	\end{equation*}
	For each $\alpha$, the projected support is contained in a frequency
	ball of radius $O(K\delta)$. Applying Theorem~\ref{variant} in
	dimension $d-1$, with $p=4$, gives
	\begin{equation*}
		\|Eg_\alpha\|_{L^4}
		\lesssim_\epsilon
		N_2^\epsilon
		K^{\frac{d-4}{4}}
		D_\delta(g_\alpha).
	\end{equation*}
	The corresponding energy intervals have bounded overlap. Hence,
	by \eqref{A-recombination},
	\begin{equation*}
		\|Eg\|_{L^4}
		\lesssim
		L^{1/4}
		\left(
		\sum_\alpha
		\|Eg_\alpha\|_{L^4}^2
		\right)^{1/2}.
	\end{equation*}
	Since every $\delta$-cap intersects only $O(1)$ of the radial
	pieces,
	\begin{equation*}
		\sum_\alpha D_\delta(g_\alpha)^2
		\lesssim
		D_\delta(g)^2.
	\end{equation*}
	This proves \eqref{general-slice-L4}.
	
\end{proof}

\vspace{7pt}
\subsection{$\boldsymbol{N_1\ge N_2^2}$}

We now prove the desired estimate for a fixed
$\left(v,\delta\right)$-plate pair $(\tau_1,\tau_2)$, with $\tau_2$
in general position.

Since
\begin{equation*}
	\operatorname{supp}f_{1,\tau_1}
	\subseteq
	\left\{
	\xi:
	\big||\xi|-1\big|\lesssim\delta
	\right\}
	\cap B(e_d,Cv),
\end{equation*}
Theorem~\ref{variant}, with local parameter
\begin{equation*}
	Q=\frac v\delta=N_2,
\end{equation*}
gives
\begin{equation}\label{general-f1-global}
	\|Ef_{1,\tau_1}\|_{L^4}
	\lesssim_\epsilon
	N_2^{\frac{d-3}{4}+\epsilon}
	D_\delta(f_{1,\tau_1}).
\end{equation}

Suppose first that $d\ge5$. From
\eqref{general-slice-L4}, \eqref{general-KL-relations}, and
\eqref{general-KL-range},
\begin{equation*}
	L^{1/4}K^{\frac{d-4}{4}}
	=
	N_2^{1/4}K^{\frac{d-5}{4}}
	\le
	N_2^{\frac{d-4}{4}}.
\end{equation*}
Therefore,
\begin{equation*}
	\|Ef_{2,\tau_2}\|_{L^4}
	\lesssim_\epsilon
	N_2^{\frac{d-4}{4}+\epsilon}
	D_\delta(f_{2,\tau_2}).
\end{equation*}
By H\"older's inequality and \eqref{general-f1-global},
\begin{equation}\label{general-position-high-d}
	\|Ef_{1,\tau_1}Ef_{2,\tau_2}\|_{L^2}
	\lesssim_\epsilon
	N_2^{\frac{2d-7}{4}+\epsilon}
	D_\delta(f_{1,\tau_1})
	D_\delta(f_{2,\tau_2}).
\end{equation}

We next consider $d=4$. In this case,
\eqref{general-slice-L4} becomes
\begin{equation}\label{general-slice-d4}
	\|Ef_{2,\tau_2}\|_{L^4}
	\lesssim_\epsilon
	N_2^\epsilon
	L^{1/4}
	D_\delta(f_{2,\tau_2}).
\end{equation}
To compensate for the factor $L^{1/4}$, we further localize
$f_{1,\tau_1}$ at the scale $K\delta$.

Cover the tangential projection of
$\operatorname{supp}f_{1,\tau_1}$ by finitely overlapping balls
$q$ of radius comparable to $K\delta$, and write
\begin{equation*}
	f_{1,\tau_1}
	=
	\sum_q f_{1,q}.
\end{equation*}
Since $f_1$ is supported in the unit shell and
$|\xi'|\lesssim v$ on its support, each $f_{1,q}$ is contained in a
full 4-dimensional frequency ball of radius $O(K\delta)$. Indeed,
if $\xi,\eta$ belong to the same $q$, then
\begin{equation*}
	|\xi'-\eta'|
	\lesssim K\delta,
\end{equation*}
and the shell relation gives
\begin{equation*}
	|\xi_d-\eta_d|
	\lesssim
	vK\delta+\delta
	\lesssim
	K\delta.
\end{equation*}
Hence Theorem~\ref{variant} in dimension $4$ gives
\begin{equation}\label{general-f1-d4-local}
	\|Ef_{1,q}\|_{L^4}
	\lesssim_\epsilon
	K^{1/4+\epsilon}
	D_\delta(f_{1,q}).
\end{equation}

On the other hand, the support of $f_{2,\tau_2}$ has diameter
$O(K\delta)$. Therefore the Minkowski sums
\begin{equation*}
	\operatorname{supp}f_{1,q}
	+
	\operatorname{supp}f_{2,\tau_2}
\end{equation*}
have bounded overlap as $q$ varies. Standard local
$L^2$-orthogonality yields
\begin{equation*}
	\|Ef_{1,\tau_1}Ef_{2,\tau_2}\|_{L^2}^2
	\lesssim
	\sum_q
	\|Ef_{1,q}Ef_{2,\tau_2}\|_{L^2}^2.
\end{equation*}
Combining H\"older's inequality,
\eqref{general-slice-d4}, and
\eqref{general-f1-d4-local}, we obtain
\begin{equation*}
	\|Ef_{1,q}Ef_{2,\tau_2}\|_{L^2}
	\lesssim_\epsilon
	N_2^\epsilon
	K^{1/4}L^{1/4}
	D_\delta(f_{1,q})
	D_\delta(f_{2,\tau_2}).
\end{equation*}
By \eqref{general-KL-relations},
\begin{equation*}
	K^{1/4}L^{1/4}
	=
	N_2^{1/4}.
\end{equation*}
After square-summing over $q$, we conclude that
\begin{equation}\label{general-position-d4-high}
	\|Ef_{1,\tau_1}Ef_{2,\tau_2}\|_{L^2}
	\lesssim_\epsilon
	N_2^{1/4+\epsilon}
	D_\delta(f_{1,\tau_1})
	D_\delta(f_{2,\tau_2}).
\end{equation}

Thus, for every $d\ge4$, the general-position case satisfies
\begin{equation}\label{general-position-case1}
	\|Ef_{1,\tau_1}Ef_{2,\tau_2}\|_{L^2}
	\lesssim_\epsilon
	N_2^{\frac{2d-7}{4}+\epsilon}
	D_\delta(f_{1,\tau_1})
	D_\delta(f_{2,\tau_2}),
	\qquad
	N_1\ge N_2^2.
\end{equation}

\vspace{7pt}
\subsection{$\boldsymbol{N_1\le N_2^2}$}

We now fix a $(v,v^2)$-plate pair $(\tau_1,\tau_2)$ as in Conjecture~\ref{8211}.
Only $\tau_2$ will be further decomposed in the short direction.

Write
\begin{equation*}
	f_{2,\tau_2}
	=
	\sum_{j=1}^{H}f_{2,j},
\end{equation*}
where each $f_{2,j}$ is supported in a $(v,\delta)$-plate and
\begin{equation}\label{general-H}
	H
	\sim
	\frac{v^2}{\delta}
	=
	\frac{N_2^2}{N_1}.
\end{equation}
For every $j$, let $t_j$ denote the corresponding general-position
parameter in Figure~\ref{general position}. The slices with
$t_j\lesssim\delta$ are included in the polar range; their projection
is contained in a ball of radius $O((v\delta)^{1/2})$, and the same
bounds below continue to hold.

Suppose first that $d\ge5$. By Lemma~\ref{general slice linear},
together with the polar endpoint estimate,
\begin{equation*}
	\|Ef_{2,j}\|_{L^4}
	\lesssim_\epsilon
	N_2^{\frac{d-4}{4}+\epsilon}
	D_\delta(f_{2,j})
\end{equation*}
uniformly in $j$. Since the $\xi_d$-supports of the $f_{2,j}$'s have
bounded overlap at scale $\delta$, \eqref{A-recombination} gives
\begin{equation*}
	\|Ef_{2,\tau_2}\|_{L^4}
	\lesssim_\epsilon
	H^{1/4}
	N_2^{\frac{d-4}{4}+\epsilon}
	D_\delta(f_{2,\tau_2}).
\end{equation*}
Combining this with \eqref{general-f1-global} and H\"older's
inequality yields
\begin{equation*}
	\|Ef_{1,\tau_1}Ef_{2,\tau_2}\|_{L^2}
	\lesssim_\epsilon
	H^{1/4}
	N_2^{\frac{2d-7}{4}+\epsilon}
	D_\delta(f_{1,\tau_1})
	D_\delta(f_{2,\tau_2}).
\end{equation*}
By \eqref{general-H},
\begin{equation*}
	H^{1/4}
	N_2^{\frac{2d-7}{4}}
	\sim
	N_2^{\frac{2d-5}{4}}
	N_1^{-1/4}.
\end{equation*}
Thus,
\begin{equation}\label{general-position-case2-high-d}
	\|Ef_{1,\tau_1}Ef_{2,\tau_2}\|_{L^2}
	\lesssim_\epsilon
	N_2^{\frac{2d-5}{4}+\epsilon}
	N_1^{-1/4}
	D_\delta(f_{1,\tau_1})
	D_\delta(f_{2,\tau_2}),
	\qquad d\ge5.
\end{equation}

It remains to consider $d=4$. Here one has to retain the
general-position parameter instead of taking the uniform bound above.

For each $\delta$-slice $f_{2,j}$ with $t_j\gtrsim\delta$, define
\begin{equation*}
	K_j:=N_1\sqrt{vt_j},
	\qquad
	L_j:=\sqrt{\frac{v}{t_j}},
\end{equation*}
so that
\begin{equation*}
	K_jL_j=N_2.
\end{equation*}
The polar slices $t_j\lesssim\delta$ are placed in the block
$K\sim N_2^{1/2}$. Group the remaining slices dyadically according to
$K_j$:
\begin{equation*}
	f_{2,\tau_2}
	=
	\sum_{\substack{N_2^{1/2}\lesssim K\lesssim N_2\\
			K\ {\rm dyadic}}}
	f_{2,K}.
\end{equation*}
Let $m_K$ denote the number of $\delta$-slices contained in the
$K$-block. Then
\begin{equation}\label{general-mK}
	m_K\le H.
\end{equation}
For a fixed $K$, put
\begin{equation*}
	L:=\frac{N_2}{K}.
\end{equation*}
Lemma~\ref{general slice linear} and
\eqref{A-recombination} give
\begin{equation}\label{general-f2-K}
	\|Ef_{2,K}\|_{L^4}
	\lesssim_\epsilon
	N_2^\epsilon
	m_K^{1/4}
	L^{1/4}
	D_\delta(f_{2,K}).
\end{equation}

We also note that, after splitting into the two signs of $\xi_d$, the
support of each $f_{2,K}$ has diameter $O(K\delta)$. Indeed, if $\xi,\eta$ belong to the same $K$-block, then
\begin{equation*}
	|\xi_d-\eta_d|
	\lesssim
	\frac{
		\big||\xi'|^2-|\eta'|^2\big|+v\delta
	}{v}
	\lesssim
	\frac{K^2}{N_2}\delta+\delta
	\lesssim
	K\delta.
\end{equation*}

For this fixed $K$, decompose
\begin{equation*}
	f_{1,\tau_1}
	=
	\sum_qf_{1,q}
\end{equation*}
into tangential frequency balls of radius $K\delta$. As in the
previous subsection, each $f_{1,q}$ is contained in a full
four-dimensional frequency ball of radius $O(K\delta)$, and hence
\begin{equation*}
	\|Ef_{1,q}\|_{L^4}
	\lesssim_\epsilon
	K^{1/4+\epsilon}
	D_\delta(f_{1,q}).
\end{equation*}
Since $\operatorname{diam}(\operatorname{supp}f_{2,K})
\lesssim K\delta$, the corresponding Minkowski sums have bounded
overlap, and therefore
\begin{equation*}
	\|Ef_{1,\tau_1}Ef_{2,K}\|_{L^2}^2
	\lesssim
	\sum_q
	\|Ef_{1,q}Ef_{2,K}\|_{L^2}^2.
\end{equation*}
By H\"older's inequality and \eqref{general-f2-K},
\begin{equation*}
	\|Ef_{1,q}Ef_{2,K}\|_{L^2}
	\lesssim_\epsilon
	N_2^\epsilon
	K^{1/4}L^{1/4}m_K^{1/4}
	D_\delta(f_{1,q})
	D_\delta(f_{2,K}).
\end{equation*}
Since $KL=N_2$, we have
\begin{equation*}
	\|Ef_{1,\tau_1}Ef_{2,K}\|_{L^2}
	\lesssim_\epsilon
	N_2^{1/4+\epsilon}
	m_K^{1/4}
	D_\delta(f_{1,\tau_1})
	D_\delta(f_{2,K}).
\end{equation*}
Using \eqref{general-mK} and summing over the
$O(\log N_2)$ dyadic values of $K$, one can finally obtain
\begin{equation}\label{general-position-case2-d4}
	\|Ef_{1,\tau_1}Ef_{2,\tau_2}\|_{L^2}
	\lesssim_\epsilon
	N_2^{3/4+\epsilon}
	N_1^{-1/4}
	D_\delta(f_{1,\tau_1})
	D_\delta(f_{2,\tau_2}).
\end{equation}

Combining \eqref{general-position-case2-high-d} and
\eqref{general-position-case2-d4}, we conclude that, for every
$d\ge4$,
\begin{equation}\label{general-position-case2}
	\|Ef_{1,\tau_1}Ef_{2,\tau_2}\|_{L^2}
	\lesssim_\epsilon
	N_2^{\frac{2d-5}{4}+\epsilon}
	N_1^{-1/4}
	D_\delta(f_{1,\tau_1})
	D_\delta(f_{2,\tau_2}),
	\qquad
	N_1\le N_2^2.
\end{equation}

The estimates \eqref{general-position-case1} and
\eqref{general-position-case2} are uniform in the position of
$\tau_2$. Therefore, we complete the proof of Conjecture \ref{8211}.

\vspace{10pt}
\section{A simpler proof of the bilinear decoupling in \cite{fan2018bilinear}}

In this appendix, we give a simpler proof of the bilinear decoupling estimate of Fan--Staffilani--Wang--Wilson \cite{fan2018bilinear}. The main ingredients are the familiar cap--plate decomposition, H\"older's inequality, and linear decoupling. We first consider the case $\lambda=1$ and then discuss the general $\lambda$-version.

We will repeatedly use the following simple observation. If $\tau$ is a $\left(v,\delta\right)$-plate whose shorter side is parallel to the $e_d$-direction and $f$ is supported in $\tau$, then by Lemma \ref{lifting} and Bourgain--Demeter $\ell^2$-decoupling theorem in dimension $d-1$ gives
\begin{equation}\label{FSWW-linear}
	\|Ef\|_{L^4(\omega_{B_{N_1^2}})}
	\lesssim_\epsilon
	N_2^{\frac{d-3}{4}+\epsilon}
	\left(
	\sum_{\substack{\theta\subseteq\tau\\|\theta|=1/N_1}}
	\|Ef_\theta\|_{L^4(\omega_{B_{N_1^2}})}^2
	\right)^{1/2}.
\end{equation}

\begin{prop}\label{FSWW-simple}
	Let $d\ge3$, $N_1\ge N_2\gg1$, and suppose that
	\begin{equation*}
		\operatorname{supp}f_1\subseteq\left\{\xi\in\R^d:|\xi|\sim 1\right\},
		\qquad
		\operatorname{supp}f_2\subseteq
		\left\{\xi\in\R^d:|\xi|\sim\frac{N_2}{N_1}\right\}.
	\end{equation*}
	Then, for any $\epsilon>0$,
	\begin{equation}\label{FSWW-simple-estimate}
		\|Ef_1Ef_2\|_{L^2(\omega_{B_{N_1^2}})}
		\lesssim_\epsilon
		N_2^\epsilon
		\left(
		N_2^{d-3}+\frac{N_2^{d-1}}{N_1}
		\right)^{1/2}
		\prod_{j=1}^2
		\left(
		\sum_{|\theta|=1/N_1}
		\|Ef_{j,\theta}\|_{L^4(\omega_{B_{N_1^2}})}^2
		\right)^{1/2}.
	\end{equation}
\end{prop}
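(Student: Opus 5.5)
We will prove Proposition \ref{FSWW-simple} with the same three ingredients as in Section 3: the lossless cap--plate decomposition, H\"older's inequality, and linear decoupling in the form \eqref{FSWW-linear}. This time no shell structure is available.

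\textbf{Step 1: reduction to a single pair of plates.} By $L^2$-orthogonality and rotation invariance, as in Conjecture \ref{821}, we may assume $f_1$ is supported in $B(e_d,Cv)$ with $v=N_2/N_1$. The $L^2$-orthogonality step uses that the supports of $f_2$ have diameter $O(v)$, so $f_1$ can be split into $v$-caps with bounded-overlap Minkowski sums. Next we apply Lemma \ref{plate}, or Remark \ref{84}, to split both $f_1$ and $f_2$ into plates with short side along $e_d$. If $N_1\ge N_2^2$ (so $v^2\le\delta$) we use $(v,\delta)$-plates. If $N_1\le N_2^2$ we use $(v,v^2)$-plates. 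In either case it suffices to bound $\|Ef_{1,\tau_1}Ef_{2,\tau_2}\|_{L^2}$ uniformly in the pair $(\tau_1,\tau_2)$, and then square-sum over the pairs.

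\textbf{Step 2: the case $N_1\ge N_2^2$.} Each $f_{j,\tau_j}$ sits in a $(v,\delta)$-plate. Inequality \eqref{FSWW-linear} gives $\|Ef_{j,\tau_j}\|_{L^4}\lessapprox N_2^{\frac{d-3}{4}}D_\delta(f_{j,\tau_j})$ for $j=1,2$. H\"older's inequality then yields the factor $N_2^{\frac{d-3}{2}}$. This is $\le (N_2^{d-3}+N_2^{d-1}/N_1)^{1/2}$, which is what we need. No structure of $f_2$ beyond the plate geometry is used.

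\textbf{Step 3: the case $N_1\le N_2^2$.} The pieces are now $(v,v^2)$-plates. We slice the short $e_d$-direction of each $\tau_j$ into $H=v^2/\delta=N_2^2/N_1$ slabs of thickness $\delta$. Each slab is a $(v,\delta)$-plate, so \eqref{FSWW-linear} applies to it. Recombining the slabs with \eqref{L14-sum} gives
\[
\|Ef_{j,\tau_j}\|_{L^4}\lessapprox H^{1/4}N_2^{\frac{d-3}{4}}D_\delta(f_{j,\tau_j}).
\]
This is where Lemma \ref{lifting} is applied slab by slab, since each slab is $\delta$-thin in $\xi_d$. Applying H\"older's inequality to the two factors gives $H^{1/2}N_2^{\frac{d-3}{2}}=(N_2^{d-1}/N_1)^{1/2}$. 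In this regime that quantity is comparable to the claimed coefficient $(N_2^{d-3}+N_2^{d-1}/N_1)^{1/2}$, which finishes the proof.

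\textbf{Main obstacle.} The only delicate point is justifying the recombination \eqref{L14-sum} across the $H$ slabs. We need the Fourier supports of the $Ef$ pieces, taken in $\R^{d+1}$, to have bounded overlap, which holds because the slabs are disjoint in $\xi_d$. We also need the sum $\sum_{\text{slabs}}D_\delta^2$ to be controlled by $D_\delta(f_{j,\tau_j})^2$, which holds because each $\delta$-cap meets only $O(1)$ slabs. A secondary point is that the cap--plate lemma requires $f_2$ to be supported near $0$ with radius $v$. This holds because $|\xi|\sim v$ on its support.
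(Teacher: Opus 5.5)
Your proposal is correct and follows the paper's proof. Both use the same $v$-cap localization and the same lossless cap--plate decomposition into $(v,\delta)$- or $(v,v^2)$-plates, split according to whether $N_1\ge N_2^2$, followed by H\"older and \eqref{FSWW-linear} on $(v,\delta)$-plates. The only cosmetic difference is in the regime $N_1\le N_2^2$: you obtain the factor $H^{1/2}$ by applying the recombination \eqref{L14-sum} to each factor ($H^{1/4}$ apiece), whereas the paper gets it in one step at the bilinear level, using Cauchy--Schwarz in one factor and $L^2$-orthogonality in the other.
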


\begin{proof}
	By the standard initial $L^2$-orthogonality, it suffices to localize $f_1$ and $f_2$ to $v$-caps centered near $e_d$ and $0$, respectively.
	
	We first consider $N_1\ge N_2^2$. Since $v^2\le \delta$, Remark~\ref{84} allows us to decompose directly into $\left(v,\delta\right)$-plates:
	\begin{equation*}
		\|Ef_1Ef_2\|_{L^2(\omega_{B_{N_1^2}})}^2
		\lesssim
		\sum_{\tau_1,\tau_2}
		\|Ef_{1,\tau_1}Ef_{2,\tau_2}\|_{L^2(\omega_{B_{N_1^2}})}^2.
	\end{equation*}
	For every fixed plate pair, H\"older's inequality and \eqref{FSWW-linear} give
	\begin{equation*}
		\|Ef_{1,\tau_1}Ef_{2,\tau_2}\|_{L^2(\omega_{B_{N_1^2}})}
		\lesssim_\epsilon
		N_2^{\frac{d-3}{2}+\epsilon}
		\prod_{j=1}^2
		\left(
		\sum_{\substack{\theta\subseteq\tau_j\\|\theta|=1/N_1}}
		\|Ef_{j,\theta}\|_{L^4(\omega_{B_{N_1^2}})}^2
		\right)^{1/2}.
	\end{equation*}
	Squaring and summing over $(\tau_1,\tau_2)$ yields the first term in \eqref{FSWW-simple-estimate}.
	
	We next assume $N_1\le N_2^2$. In this case, the lossless cap--plate decomposition gives $(v,v^2)$-plates. For a fixed pair $(\tau_1,\tau_2)$, decompose each $\tau_j$ further into $\left(v,\delta\right)$-plates $\widetilde{\tau}_j$. The number of such subplates is
	\[
	H\sim\frac{v^2}{\delta}=\frac{N_2^2}{N_1}.
	\]
	Applying Cauchy--Schwarz in one factor and $L^2$-orthogonality in the other gives
	\begin{equation*}
		\|Ef_{1,\tau_1}Ef_{2,\tau_2}\|_{L^2(\omega_{B_{N_1^2}})}^2
		\lesssim
		H
		\sum_{\substack{\widetilde{\tau}_1\subseteq\tau_1\\
				\widetilde{\tau}_2\subseteq\tau_2}}
		\|Ef_{1,\widetilde{\tau}_1}Ef_{2,\widetilde{\tau}_2}\|_{L^2(\omega_{B_{N_1^2}})}^2.
	\end{equation*}
	Applying H\"older's inequality and \eqref{FSWW-linear} to every finer plate pair, and then summing over all the fine and coarse plates, gives
	\begin{equation*}
		\|Ef_1Ef_2\|_{L^2(\omega_{B_{N_1^2}})}
		\lesssim_\epsilon
		H^{1/2}N_2^{\frac{d-3}{2}+\epsilon}
		\prod_{j=1}^2
		\left(
		\sum_{|\theta|=1/N_1}
		\|Ef_{j,\theta}\|_{L^4(\omega_{B_{N_1^2}})}^2
		\right)^{1/2}.
	\end{equation*}
	Since $H^{1/2}N_2^{(d-3)/2}=N_2^{(d-1)/2}N_1^{-1/2}$, this gives the second term in \eqref{FSWW-simple-estimate}.
	
\end{proof}

We now record the general $\lambda$-version. Set
\begin{equation*}
	\Omega_\lambda:=[0,N_1^2]\times[0,(\lambda N_1)^2]^d,
	\qquad \lambda\ge1.
\end{equation*}
By Lemma \ref{lifting} and (\ref{A-recombination}), if $f$ is supported in a $\left(v,\frac{1}{\lambda N_1}\right)$-plate, then we have
\begin{equation}\label{FSWW-linear-lambda}
	\|Ef\|_{L^4(\omega_{\Omega_\lambda})}
	\lesssim_\epsilon
	N_2^\epsilon
	\lambda^{\frac{d-1}{4}}
	N_2^{\frac{d-3}{4}}
	\left(
	\sum_{\substack{\theta\subseteq\tau\\|\theta|=1/(\lambda N_1)}}
	\|Ef_\theta\|_{L^4(\omega_{\Omega_\lambda})}^2
	\right)^{1/2}.
\end{equation}

\begin{prop}\label{FSWW-simple-lambda}
	Under the assumptions of Proposition~\ref{FSWW-simple}, for every $\lambda\ge1$,
	\begin{equation}\label{FSWW-general-lambda}
		\|Ef_1Ef_2\|_{L^2(\omega_{\Omega_\lambda})}
		\lesssim_\epsilon
		N_2^\epsilon
		\lambda^{d/2}
		\left(
		\frac{N_2^{d-3}}{\lambda}
		+
		\frac{N_2^{d-1}}{N_1}
		\right)^{1/2}
		\prod_{j=1}^2
		\left(
		\sum_{|\theta|=1/(\lambda N_1)}
		\|Ef_{j,\theta}\|_{L^4(\omega_{\Omega_\lambda})}^2
		\right)^{1/2}.
	\end{equation}
\end{prop}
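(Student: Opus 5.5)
We follow the proof of Proposition~\ref{FSWW-simple}, now with caps of size $1/(\lambda N_1)$ on the larger spatial region $\Omega_\lambda$. Set $\delta_\lambda:=1/(\lambda N_1)$ and $v=N_2/N_1$. The initial $L^2$-orthogonality still lets us localize $f_1$ to a $v$-cap near $e_d$ and $f_2$ to a $v$-cap near $0$, since $\Omega_\lambda$ has temporal length $N_1^2\le v^{-2}$. Because the time scale is still $N_1^2$, the lossless cap--plate decomposition (Remark~\ref{84}, with $R$ the temporal extent $N_1^2$ and the weight $\omega_{\Omega_\lambda}$ handled via (\ref{weight})) applies with short side $\widetilde v=\max\{v^2,N_1^{-2}\}$. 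Thus we always reduce to pairs of $(v,\max\{v^2,\delta\})$-plates, exactly as in the $\lambda=1$ argument. Note that these plates are coarser than $\delta_\lambda$ in the short direction.

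\textbf{Case $N_1\ge N_2^2$.} Take $(v,\delta)$-plates with $\delta=1/N_1$. For a fixed pair, Hölder's inequality and the lambda-linear bound (\ref{FSWW-linear-lambda}), applied to each factor, give a constant $\lambda^{\frac{d-1}{2}}N_2^{\frac{d-3}{2}}$. Squaring and summing over the pairs yields
\[
\lambda^{\frac{d-1}{2}}N_2^{\frac{d-3}{2}}=\lambda^{d/2}\Big(\frac{N_2^{d-3}}{\lambda}\Big)^{1/2}.
\]
This is the first term.

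\textbf{Case $N_1\le N_2^2$.} Take $(v,v^2)$-plates and split each into $H\sim N_2^2/N_1$ subplates of thickness $\delta$. As before, apply Cauchy--Schwarz to one factor and $L^2$-orthogonality to the other. Then apply Hölder's inequality and (\ref{FSWW-linear-lambda}) to each fine pair. This gives
\[
H^{1/2}\lambda^{\frac{d-1}{2}}N_2^{\frac{d-3}{2}}=\lambda^{\frac{d-1}{2}}N_2^{\frac{d-1}{2}}N_1^{-1/2}\le \lambda^{d/2}\Big(\frac{N_2^{d-1}}{N_1}\Big)^{1/2}.
\]
This dominates what is needed for the second term. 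In both cases we recombine by summing over the plates, the $\theta$'s of size $1/(\lambda N_1)$ being partitioned among the plates.

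\textbf{Main obstacle.} The delicate step is verifying (\ref{FSWW-linear-lambda}) itself and checking that the cap--plate step and the orthogonality are compatible with the anisotropic region $\Omega_\lambda$. The spatial side $(\lambda N_1)^2$ is larger than the time side, so the Fourier uncertainty is $N_1^{-2}$ in $\tau$ but $(\lambda N_1)^{-2}$ in $\xi$. The plate-pair orthogonality in Lemma~\ref{plate} only uses the $\tau$-resolution $N_1^{-2}\le v^2$ together with the $\xi$-resolution, so it survives.

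For (\ref{FSWW-linear-lambda}), we first lift (Lemma~\ref{lifting}) to the $(d-1)$-dimensional paraboloid. On time scale $N_1^2$, Bourgain--Demeter decouples the $v$-ball only down to caps of size $1/N_1$, costing $N_2^{\frac{d-3}{4}}$. Each $1/N_1$-cap contains $\lambda^{d-1}$ caps of size $1/(\lambda N_1)$, which cannot be decoupled further at this time scale. We handle them by (\ref{A-recombination}), which costs $(\lambda^{d-1})^{1/4}$. The thin $\xi_d$ direction, of width $1/N_1$, already corresponds to a single $\delta_\lambda$-layer up to a factor $\lambda$ absorbed the same way. We expect only the bookkeeping of this last point to require care.
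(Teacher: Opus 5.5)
\textbf{There is a genuine gap in your case $N_1\ge N_2^2$.}

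\emph{The misapplied estimate.} There you apply \eqref{FSWW-linear-lambda} to $(v,1/N_1)$-plates. That bound is stated for $(v,1/(\lambda N_1))$-plates, and it fails for thicker ones. A $(v,1/N_1)$-plate consists of $\lambda$ layers of thickness $\delta_\lambda=(\lambda N_1)^{-1}$ in the $\xi_d$-direction. As your last paragraph concedes, these layers can only be recombined through \eqref{A-recombination}, at a cost of $\lambda^{1/4}$ per factor.

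\emph{Why the loss cannot be absorbed.} On the time scale $N_1^2$, the parabola over a $\xi_d$-interval of length $1/N_1$ stays within $N_1^{-2}$ of its tangent line. That is exactly the temporal uncertainty, so the layers behave like flat pieces. Stacking $\lambda$ coherent $\delta_\lambda$-bumps in $\xi_d$ shows that $\lambda^{1/4}$ is sharp for the linear estimate.

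\emph{What your argument actually gives.} Carried out honestly, your case $N_1\ge N_2^2$ yields the constant $\lambda^{d/2}N_2^{(d-3)/2}$. The target is $\lambda^{d/2}\max\{\lambda^{-1/2}N_2^{(d-3)/2},\,N_2^{(d-1)/2}N_1^{-1/2}\}$. You are therefore off by $\min\{\lambda,N_1/N_2^2\}^{1/2}$, which is large whenever both $\lambda$ and $N_1/N_2^2$ are large. The same misapplication occurs in your case $N_1\le N_2^2$, where it is harmless. There the corrected constant $\lambda^{d/2}N_2^{(d-1)/2}N_1^{-1/2}$ is exactly the target, but the $\lambda^{1/2}$ of slack you claim does not exist.

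\textbf{The missing idea} is to run the cap--plate decomposition at the scale $\delta_\lambda$ rather than $1/N_1$. The lossless step produces plates as thin as $v^2$; your $\max\{v^2,N_1^{-2}\}$ is just $v^2$. Remark~\ref{84} permits any thickness $\widetilde v\ge v^2$. So the case split should be $v^2$ versus $\delta_\lambda$, i.e.\ $\lambda\le N_1/N_2^2$ versus $\lambda\ge N_1/N_2^2$, not $N_1$ versus $N_2^2$.

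\emph{Case $\lambda\le N_1/N_2^2$.} Decompose losslessly straight into $(v,\delta_\lambda)$-plates. Then \eqref{FSWW-linear-lambda} applies as stated, and H\"older gives $\lambda^{(d-1)/2}N_2^{(d-3)/2}$.

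\emph{Case $\lambda\ge N_1/N_2^2$.} Use $(v,v^2)$-plates and cut each into $H_\lambda\sim\lambda N_2^2/N_1$ layers of thickness $\delta_\lambda$. Cauchy--Schwarz in one factor and orthogonality in the other cost only $H_\lambda^{1/2}$, which gives $\lambda^{d/2}N_2^{(d-1)/2}N_1^{-1/2}$.

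In particular, the regime $1\le N_1/N_2^2\le\lambda$ belongs to the second case. There one pays $(\lambda N_2^2/N_1)^{1/2}$ rather than the $\lambda^{1/2}$ forced by your $(v,1/N_1)$-plates.

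\emph{Minor point.} Your justification ``$N_1^2\le v^{-2}$'' for the initial localization is backwards, since $v^{-2}=N_1^2/N_2^2\le N_1^2$. That step only needs the Fourier uncertainty of $\Omega_\lambda$ to be $\ll v$, which holds.
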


\begin{proof}
	As before, we first localize both factors to $v$-caps. If
	\[
	1\le\lambda\le\frac{N_1}{N_2^2},
	\]
	then $v^2\le(\lambda N_1)^{-1}$, and the lossless cap--plate decomposition applies directly to $\left(v,(\lambda N_1)^{-1}\right)$-plates. H\"older's inequality and \eqref{FSWW-linear-lambda} give
	\begin{equation*}
		\|Ef_1Ef_2\|_{L^2(\omega_{\Omega_\lambda})}
		\lesssim_\epsilon
		N_2^\epsilon
		\lambda^{\frac{d-1}{2}}
		N_2^{\frac{d-3}{2}}
		\prod_{j=1}^2
		\left(
		\sum_{|\theta|=1/(\lambda N_1)}
		\|Ef_{j,\theta}\|_{L^4(\omega_{\Omega_\lambda})}^2
		\right)^{1/2}.
	\end{equation*}
	This is exactly the first term on the right-hand side of \eqref{FSWW-general-lambda}.
	
	If
	\[
	\lambda\ge\frac{N_1}{N_2^2},
	\]
	we first decompose losslessly into $(v,v^2)$-plates and then further into $\left(v,(\lambda N_1)^{-1}\right)$-plates. The number of fine plates contained in a coarse plate is
	\[
	H_\lambda\sim \frac{v^2}{(\lambda N_1)^{-1}}
	=\frac{\lambda N_2^2}{N_1}.
	\]
	Repeating the preceding Cauchy--Schwarz, orthogonality, and H\"older argument gives the additional factor $H_\lambda^{1/2}$. Thus,
	\begin{equation*}
		\|Ef_1Ef_2\|_{L^2(\omega_{\Omega_\lambda})}
		\lesssim_\epsilon
		N_2^\epsilon
		H_\lambda^{1/2}
		\lambda^{\frac{d-1}{2}}
		N_2^{\frac{d-3}{2}}
		\prod_{j=1}^2
		\left(
		\sum_{|\theta|=1/(\lambda N_1)}
		\|Ef_{j,\theta}\|_{L^4(\omega_{\Omega_\lambda})}^2
		\right)^{1/2}.
	\end{equation*}
	Since
	\[
	H_\lambda^{1/2}\lambda^{\frac{d-1}{2}}N_2^{\frac{d-3}{2}}
	=
	\lambda^{d/2}\frac{N_2^{\frac{d-1}{2}}}{N_1^{1/2}},
	\]
	we obtain the second term in \eqref{FSWW-general-lambda}. This completes the proof.
	
\end{proof}

\begin{rem}
	Compared with \cite{fan2018bilinear}, the above argument substantially simplifies the high-dimensional case $d\ge3$. In particular, after the cap--plate decomposition of Lemma~5.1 in \cite{fan2018bilinear}, we directly apply H\"older's inequality and $(d-1)$-dimensional linear decoupling. Thus, the auxiliary estimates Lemmas~3.2--3.6, Lemma~5.5, Corollary~5.7 of \cite{fan2018bilinear} are not needed.
	
	Moreover, this argument makes the transition threshold transparent. The natural transverse scale produced by the cap--plate decomposition is $v^2=N_2^2/N_1^2$, while the desired scale is $(\lambda N_1)^{-1}$. Hence the transition occurs exactly at
	\begin{equation*}
		v^2=\frac{1}{\lambda N_1}
		\qquad\Longleftrightarrow\qquad
		\lambda=\frac{N_1}{N_2^2}.
	\end{equation*}
	Below this threshold no further decomposition is needed, while above it one must further subdivide the $(v,v^2)$-plates. In particular, for $\lambda=1$, the threshold is precisely $N_1=N_2^2$.
\end{rem}

\clearpage
		\section*{Acknowledgement}
		The author is grateful to Prof. Alex Cohen for helpful suggestions and discussions, as well as for his generous support of the author's travel to the ICM and PHADE, which provided valuable opportunities to communicate with other researchers. The author would also like to particularly thank Yixuan Pang and Ruyi Wang for many helpful discussions.
		
		\section*{Conflict of interest statement}
		The author does not have any possible conflict of interest.
		
		\section*{Data availability statement}
		The manuscript has no associated data.
		\bigskip
		\bigskip

		\bibliographystyle{alpha}
		\bibliography{Shell}

	\end{document}